\numberwithin{equation}{section}
\newtheorem{theorem}[equation]{Theorem}
\newtheorem{proposition}[equation]{Proposition}
\newtheorem{lemma}[equation]{Lemma}
\newtheorem{corollary}[equation]{Corollary}
\newtheorem{remark}[equation]{Remark}
\title[Generalized Two-component Hunter-Saxton System]
      {Global Estimates and Blow-up Criteria for the Generalized Hunter-Saxton System}
\author[Alejandro Sarria]{}
\subjclass[2010]{35B44, 35B10, 35B65, 35Q35, 35B40.}
 \keywords{Generalized Hunter-Saxton system, blow-up, global existence, convergence to steady states, Ermakov-Pinney equation.}
 \email{alejandro.sarria@colorado.edu}
\begin{document}
\maketitle

\centerline{\scshape Alejandro Sarria}
\medskip
{\footnotesize
 \centerline{Department of Mathematics}
   \centerline{University of Colorado, Boulder}
   \centerline{Boulder, CO 80309-0395, USA}
} 

\medskip


\bigskip

\centerline{(\small{\emph{Dedicated to Professor Ralph Saxton on the occasion of his $58th$ birthday)}}}


\begin{abstract}
The generalized, two-component Hunter-Saxton system comprises several well-known models of fluid dynamics and serves as a tool for the study of one-dimensional fluid convection and stretching. In this article a general representation formula for periodic solutions to the system, which is valid for arbitrary values of parameters $(\lambda,\kappa)\in\mathbb{R}\times\mathbb{R}$, is derived. This allows us to examine in great detail qualitative properties of blow-up as well as the asymptotic behaviour of solutions, including convergence to steady states in finite or infinite time.
\end{abstract}

\section{Introduction}\hfill
\label{sec:intro}

We are concerned with the study of regularity in solutions to a boundary value problem for a system of equations comprising several well-known models of fluid dynamics as well as modeling convection and stretching in one-dimensional fluid evolution equations,
\begin{equation}
\label{eq:hs}
\setlength{\jot}{12pt}
\begin{cases}
u_{xt}+uu_{xx}-\lambda u_x^2-\kappa\rho^2=I(t),\,\,\,\,\,\,\,\,\,\,\,\,\,&t>0,
\\[-3ex]
\\
\rho_t+u\rho_x=2\lambda\rho u_x,\,\,\,\,\,\,\,\,\,&t>0,
\\[-3ex]
\\
u(x,0)=u_0(x),\,\,\,\,\rho(x,0)=\rho_0(x),\,\,\,\,\,\,&x\in[0,1],
\end{cases}
\end{equation}
where $\lambda$ and $\kappa$ denote arbitrary parameters, the nonlocal term $I(t)$ is given by
\begin{equation}
\label{eq:nonlocal}
\begin{split}
I(t)=-\kappa\int_0^1{\rho^2\,dx}-(1+\lambda)\int_0^1{u_x^2\,dx},
\end{split}
\end{equation}
and solutions are subject to periodic boundary conditions
\begin{equation}
\label{eq:pbc}
\begin{split}
u(0,t)=u(1,t),\,\,\,\,\,\,\,u_x(0,t)=u_x(1,t),\,\,\,\,\,\,\,\rho(0,t)=\rho(1,t).
\end{split}
\end{equation}
System (\ref{eq:hs}) was first introduced in \cite{Wunsch5} as \emph{the generalized Hunter-Saxton system} due to its connection, via $(\lambda,\kappa)=(-1/2,\pm1/2)$, to the Hunter-Saxton (HS) system. Both models have been studied extensively in the literature, see for instance \cite{Wunsch4}-\cite{Wunsch6}, \cite{Lenells2}-\cite{Liu2}, \cite{Wu1}, \cite{Moon1}, and \cite{Moon0}. The HS system is a particular case of the Gurevich-Zybin system describing the formation of large scale structure in the universe, as well as harmonic wave generation in nonlinear optics (c.f. \cite{Pavlov1} and references therein). Moreover, it arises as the ``short-wave'' (or ``high-frequency'') limit, obtained via the change of variables $(x,t)\to(\epsilon x,\epsilon t)$ in (\ref{eq:hs})i) and then letting $\epsilon\to0$ in the resulting equation, of the two-component Camassa-Holm (CH) system (\cite{Aconstantin0}, \cite{Escher0}),
which, in turn, is derived from the Green-Naghdi equations (\cite{Johnson1}, \cite{Green1}); widely used in coastal oceanography and approximate the free-surface Euler equations. The two-component CH system has been the subject of extensive studies (\cite{Aconstantin0}, \cite{Escher0}, \cite{Guan1}-\cite{Guo2}, \cite{Lenells2}, \cite{Mohajer1}, \cite{Mustafa1}, \cite{Zhang1}). In this context, $u$ represents the leading approximation of the horizontal fluid velocity, whereas $\rho$ stands for the horizontal deviation of the rescaled free water surface from equilibrium. Further, for $\rho_0\equiv0$, the two-component CH system reduces to the well-known CH equation, a nonlinear dispersive wave equation that arises in the study of propagation of unidirectional irrotational waves over a flat bed, as well as water waves moving over an underlying shear flow ((\cite{Camassa1}, \cite{Dullin1}, \cite{Johnson1}, \cite{Aconstantin2}), \cite{Johnson2}, \cite{Misiolek1}). The CH equation is completely integrable, has an infinite number of conserved quantities, and its solitary wave solutions are solitons (\cite{Camassa1}). It also admits ``peakons'' and ``breaking wave'' solutions. The former are waves of the form $u(x,t)=ce^{-\left|x-ct\right|}$ which are smooth everywhere except at their peaks, whereas, the latter represent waves whose height, $u$, remains finite while its slope, $u_x$, blows up. System (\ref{eq:hs}) is also particularly of interest due to the potential exhibition of such non-linear phenomena in its solutions, phenomena, we should note, is not inherent to small-amplitude models but exist in the full governing water-wave equations. 

For $\rho\equiv0$, or $\rho=\sqrt{-1}\,u_x$,\, (\ref{eq:hs}) becomes the generalized, inviscid Proudman-Johnson (giPJ) equation (\cite{Proudman1}-\cite{Saxton1}, \cite{Childress}, \cite{Okamoto1}, \cite{Okamoto2}, \cite{Aconstantin1}, \cite{Weyl1}, \cite{Wunsch1}, \cite{Wunsch2}, \cite{Wunsch3}, c.f. \cite{Chen1}, \cite{Chen2} for viscous model) comprising, for $\lambda=-1$, the Burgers' equation of gas dynamics, differentiated twice in space. If $\lambda=\frac{1}{n-1}$, it models stagnation point-form solutions (\cite{Childress}, \cite{Saxton1}, \cite{Sarria3}) to the $n-$dimensional incompressible Euler equations 
\begin{equation}
\label{eq:EE}
\boldsymbol{u}_t+(\boldsymbol{u}\cdot\nabla)\boldsymbol{u}=-\nabla p,\,\,\,\,\,\,\,\,\,\,\,\,\,\,\,\,\,\,\,\,\,\nabla\cdot\boldsymbol u=0,
\end{equation}
where $\boldsymbol{u}$ denotes the $n-$dimensional fluid velocity and $p$ represents its pressure. Alternatively, for $\lambda=1$ and $n=2$ it appears as a reduced one-dimensional model for the three-dimensional inviscid primitive equations of large scale oceanic and atmospheric dynamics (\cite{Cao1}, c.f. \cite{Cao2}, \cite{Cao3} for viscous case). Lastly, when $\lambda=-\frac{1}{2}$, the giPJ equation becomes the HS equation, describing the orientation of waves in massive director field nematic liquid crystals (\cite{Hunter1}, \cite{Dafermos1}, \cite{Khesin2}, \cite{Beals1}, \cite{Tiglay1}, \cite{Yin1}). For periodic solutions, HS also has a deep geometric interpretation  as it describes geodesics on the group of orientation-preserving diffeomorphisms on the unit circle modulo the subgroup of rigid rotations with respect to a particular right-invariant metric (\cite{Khesin1}, \cite{Bressan1}, \cite{Tiglay1}, \cite{Lenells1}). It is known (\cite{Lenells2}) that the HS system admits an interpretation as the Euler equation on the superconformal algebra of contact vector fields on the $1|2-$dimensional supercircle. Particularly, this geometric perspective has been useful in the construction of global weak solutions to the HS system (\cite{Lenells2}, \cite{Lenells3}, \cite{Wunsch6}). 

From a more heuristic point of view, (\ref{eq:hs}) may serve as a tool to better understand the role that convection and stretching play in the regularity of solutions to one-dimensional fluid evolution equations; it has been argued that the convection term can sometimes cancel some of the nonlinear effects (\cite{Ohkitani1}, \cite{Hou1}, \cite{Okamoto3}). More particularly, differentiating (\ref{eq:hs})i) in space, and setting $\omega=-u_{xx}$, yields
\begin{equation}
\label{eq:diff}
\begin{cases}
&\omega_{t}+\underbrace{u\omega_{x}}_{convection}+(1-2\lambda)\underbrace{\omega u_{x}}_{stretching}+2\kappa\underbrace{\rho\rho_x}_{coupling}=0,
\\
&\rho_t+\underbrace{u\rho_x}_{convection}=2\lambda u_x\rho.
\end{cases}
\end{equation}
The nonlinear terms in equation (\ref{eq:diff})i) represent the competition in fluid convection between nonlinear steepening and amplification due to $(1-2\lambda)$-dimensional stretching and $2\kappa$-dimensional coupling (\cite{Holm1}, \cite{Wunsch5}). More particularly, the parameter $\lambda\in\mathbb{R}$ is related to the ratio of stretching to convection, while $\kappa\in\mathbb{R}$ denotes a real dimensionless constant measuring the impact of the coupling between $u$ and $\rho$.

Additional fluid models belonging to the family of equations (\ref{eq:hs}) include: For ($\lambda=-\kappa=\infty$), equation (\ref{eq:hs})ii) reduces, after the introduction of new variables, to the well-known Constantin-Lax-Majda equation (\cite{Const2}), a one-dimensional model for the three-dimensional vorticity equation for which finite-time blow-up solutions are known to exist. If $\lambda=-\kappa=1/2$, the inviscid von Karman-Batchelor flow (\cite{Chae1}, \cite{Hou1}), derived from the 3D incompressible Euler equations, is known to have periodic global strong solutions. Lastly, if we assume that, as long as it is defined, $\rho$ never vanishes on $[0,1]$, then 
\begin{equation}
\label{eq:hs2}
\setlength{\jot}{12pt}
\begin{cases}
u_{xt}+uu_{xx}-u_x^2+\rho^2=\int_0^1{\rho^2\,dx}-2\int_0^1{u_x^2\,dx},
\\[-3ex]
\\
\rho_t+u\rho_x=\frac{1}{2}\rho u_x,
\end{cases}
\end{equation}
which represents a slight variation of (\ref{eq:hs}), can be obtained from the 2D inviscid, incompressible Boussinesq equations 
\begin{equation}
\label{eq:Boussinesq}
\begin{cases}
&\boldsymbol{u}_t+(\boldsymbol{u}\cdot\nabla)\boldsymbol{u}=-\nabla p+\theta\stackrel{\rightarrow}{e_2},
\\
&\nabla\cdot\boldsymbol{u}=0,
\\
&\theta_t+\boldsymbol{u}\cdot\nabla\theta=0
\end{cases}
\end{equation}
by considering velocities $\boldsymbol{u}$ and scalar temperatures $\theta$ (or densities) of the form
$$\boldsymbol{u}(x,y,t)=(u(x,t),-yu_x(x,t)),\,\,\,\,\,\,\,\,\,\,\,\,\,\,\,\,\,\,\theta(x,y,t)=y\rho(x,t)^2$$
on an infinitely long two-dimensional channel $(x,y)\in[0,1]\times\mathbb{R}$. The Boussinesq equations model large-scale atmospheric and oceanic fluids (see for instance \cite{Gill1}, \cite{Pedlosky1}, \cite{Majda1}, \cite{Sarria5}). Above, $\theta$ denotes either the scalar temperature or density, and $\stackrel{\rightarrow}{e_2}$ is the standard unit vector in the vertical direction. 

Before giving an outline of the paper, we discuss some previous results.

\subsection{Previous Results}
\label{subsec:previous}
In this section, we review a few results on the regularity of solutions to system (\ref{eq:hs}) (\cite{Wunsch4}, \cite{Wunsch5}, \cite{Wu1}, \cite{Liu1}). For additional blow-up or global-in-time criteria the reader may refer to \cite{Moon0} and \cite{Moon1}. 

First of all, the local well-posedness of (\ref{eq:hs}) has already been established, see for instance \cite{Wunsch4}, \cite{Wunsch5}, or \cite{Wu1}. Then the following is known:
\begin{itemize}
\item\label{it:1} If $(\lambda,\kappa)=(-1/2,-1/2)$, suppose $\int_0^1{\rho_0^2\,dx}\leq\int_0^1{(u_0')^2dx}$ and $\min_{x\in[0,1]}u_0'<0$. Then $u_x$ diverges in finite time.
\item\label{it:2} For $(\lambda,\kappa)\in\{-1/2\}\times\mathbb{R}^-$, assume $u_0$ is odd with $u_0(0)<0$, while $\rho_0$ is even and $\rho_0(0)=0$. Moreover, suppose $\left\|u_0'\right\|^2_2+k\left\|\rho_0\right\|^2_2\geq0$. Then $u_x(0,t)$ diverges.
\item\label{it:3} If $(\lambda,\kappa)\in\{-1/2\}\times\mathbb{R}^+$, $(u,\rho)$ blows up if $\min_{x}u_x\to-\infty$ or $\left\|\rho_x\right\|_{\infty}\to+\infty$.
\item\label{it:4} Suppose $(\lambda,\kappa)\in\{-1/2\}\times\mathbb{R}^+$, and $\rho_0$ never vanishes. Then solutions are global. Similarly for $(\lambda,\kappa)\in\{0\}\times\mathbb{R}^+$.
\item\label{it:5} Suppose $(\lambda,\kappa)\in[-1/4,0)\times\mathbb{R}^{-}$ and 
\begin{equation*}
\min_{x\in[0,1]}u_0'(x)<-\sqrt{\frac{k}{\left|2\lambda\right|-2\epsilon}\left\|\rho_0\right\|_{p}^{^{-\frac{1}{2\lambda}}}}<0,\,\,\,\,\,\,\,\,\,\,\,\,\,\,\epsilon\in\left(0,\left|\lambda\right|\right)
\end{equation*}
for $p=-\frac{1}{2\lambda}$. Then $u_x$ will diverge.
\item\label{it:6} For $(\lambda,\kappa)=(-1/2,1/2)$, nontrivial $\rho_0$, and nonconstant $u_0$ assume there is $x_0\in[0,1]$ such that $\rho_0(x_0)=0.$ Then solutions diverge.
\end{itemize}

\section{Outline of the Paper}
\label{sec:outline}

The outline for the remainder of the paper is as follows. In \S\ref{sec:solution}, we derive representation formulae for general solutions to (\ref{eq:hs}). This is done using the method of characteristics to reformulate the system as a nonlinear second-order ODE, which we are then able to solve using the prescribed boundary conditions. In \S\ref{subsec:notation} we establish terminology and introduce useful preliminary results. Then we begin our study of regularity in Sections \S\ref{subsec:regularityak<0} and \S\ref{subsec:regularityak>0}, where we examine the case of parameter values $\lambda$ and $\kappa$ such that $\lambda\kappa<0$ and respectively $\lambda\kappa>0$. More particularly, Theorems \ref{thm:global1ak<0} and \ref{thm:blow1ak<0} in \S\ref{subsec:regularityak<0} establish criteria for the finite-time convergence of solutions to steady states and their finite-time blow-up, respectively. Then Theorems \ref{thm:ak>0lambdanegkappapos} and \ref{thm:singular} in \S\ref{subsec:regularityak>0} examine finite-time blow-up and global existence in time for $(\lambda,\kappa)\in\mathbb{R}^-\times\mathbb{R}^-$ and respectively $(\lambda,\kappa)\in\mathbb{R}^+\times\mathbb{R}^+$, with the latter case leading to the ``most singular'' solutions. Lastly, specifics examples are provided in \S\ref{sec:examples}, while trivial or simpler cases are deferred to Appendix \ref{sec:simplecases} or the Corollaries \ref{coro:special1} and  \ref{coro:singular2} in Appendix \ref{sec:doublemult}.

\section{General Solution Along Characteristics}\hfill
\label{sec:solution}

In this section we derive new solution formulae for $u_x$ and $\rho$, along characteristics, for arbitrary $(\lambda,\kappa)\in\mathbb{R}\times\mathbb{R}$. 

Fix $\alpha\in[0,1]$ and define, for as long as $u$ exists, characteristics $\gamma$ via the initial value problem
\begin{equation}
\label{eq:charac}
\begin{split}
\dot\gamma(\alpha,t)=u(\gamma(\alpha,t),t),\,\,\,\,\,\,\,\,\,\,\,\,\,\,\gamma(\alpha,0)=\alpha,
\end{split}
\end{equation}
where ${}^{\cdot}=\frac{d}{dt}$. Then equation (\ref{eq:hs})ii) may be written as
$$\left(\ln\rho(\gamma(\alpha,t),t)\right)^{\cdot}=2\lambda u_x(\gamma(\alpha,t),t),$$
which yields
\begin{equation}
\label{eq:one}
\begin{split}
\rho(\gamma(\alpha,t),t)=\rho_0(\alpha)\cdot e^{2\lambda\int_0^t{u_x(\gamma(\alpha,s),s)\,ds}}.
\end{split}
\end{equation}
But (\ref{eq:charac}) implies that
\begin{equation}
\label{eq:two}
\begin{split}
\dot\gamma_{\alpha}=u_x(\gamma(\alpha,t),t)\cdot\gamma_{\alpha},\,\,\,\,\,\,\,\,\,\,\,\,\,\,\,\gamma_{\alpha}(\alpha,0)=1,
\end{split}
\end{equation}
so that the `jacobian', $\gamma_{\alpha}$, satisfies
\begin{equation}
\label{eq:three}
\begin{split}
\gamma_{\alpha}=e^{\int_0^t{u_x(\gamma(\alpha,s),s)\,ds}}.
\end{split}
\end{equation}
From (\ref{eq:one}) and (\ref{eq:three}), we conclude that
\begin{equation}
\label{eq:rho0}
\begin{split}
\rho(\gamma(\alpha,t),t)=\rho_0(\alpha)\cdot\gamma_{\alpha}(\alpha,t)^{2\lambda}.
\end{split}
\end{equation}
Now, using (\ref{eq:two}) and (\ref{eq:rho0}) we may write (\ref{eq:hs})i), along $\gamma$, as
\begin{equation}
\label{eq:four}
\begin{split}
\frac{d}{dt}(u_x(\gamma(\alpha,t),t))=\lambda\left(\dot\gamma_{\alpha}\cdot\gamma_{\alpha}^{-1}\right)^2+\kappa\left(\rho_0\cdot\gamma_{\alpha}^{2\lambda}\right)^2+I(t).
\end{split}
\end{equation}
Then differentiating (\ref{eq:two}) in time and subsequently using (\ref{eq:two}) and (\ref{eq:four}), gives
\begin{equation*}
\begin{split}
\ddot\gamma_{\alpha}=\left(\dot\gamma_{\alpha}\cdot\gamma_{\alpha}^{-1}\right)^2\cdot\gamma_{\alpha}+\left(\lambda\left(\dot\gamma_{\alpha}\cdot\gamma_{\alpha}^{-1}\right)^2+\kappa\left(\rho_0\cdot\gamma_{\alpha}^{2\lambda}\right)^2+I(t)\right)\cdot\gamma_{\alpha},
\end{split}
\end{equation*}
which may be rearranged as
\begin{equation}
\label{eq:six}
\begin{split}
I(t)+\kappa\left(\rho_0\cdot\gamma_{\alpha}^{2\lambda}\right)^2=\frac{\ddot\gamma_{\alpha}\cdot\gamma_{\alpha}-\left(1+\lambda\right)\dot\gamma_{\alpha}^2}{\gamma_{\alpha}^2},
\end{split}
\end{equation}
or alternatively for $\lambda\neq0$,\footnote[1]{Refer to the Appendix for the case $\lambda=0$.}
\begin{equation}
\label{eq:seven}
\begin{split}
I(t)+\kappa\left(\rho_0\cdot\gamma_{\alpha}^{2\lambda}\right)^2=-\frac{1}{\lambda}\,\gamma_{\alpha}^{\lambda}\cdot\left(\gamma_{\alpha}^{-\lambda}\right)^{\ddot{}}.
\end{split}
\end{equation}
Letting 
\begin{equation}
\label{eq:omega}
\begin{split}
\omega(\alpha,t)=\gamma_{\alpha}(\alpha,t)^{-\lambda}
\end{split}
\end{equation}
in (\ref{eq:seven}) and setting
\begin{equation}
\label{eq:eight}
\begin{split}
f(t)=\lambda I(t),\,\,\,\,\,\,\,\,\,\,\,\beta(\alpha)=-\lambda\kappa\rho_0(\alpha)^2,
\end{split}
\end{equation}
then yields
\begin{equation}
\label{eq:nonlinear}
\begin{split}
\ddot\omega(\alpha,t)+f(t)\omega(\alpha,t)=\beta(\alpha)\omega(\alpha,t)^{-3},
\end{split}
\end{equation}
a second-order nonlinear ODE parametrized by $\alpha$ and whose solution, according to (\ref{eq:two}) and (\ref{eq:omega}), satisfies the initial values 
\begin{equation}
\label{eq:ivomega}
\begin{split}
\omega(\alpha,0)=1,\,\,\,\,\,\,\,\,\,\,\,\,\,\,\,\,\,\,\dot\omega(\alpha,0)=-\lambda u_0'(\alpha).
\end{split}
\end{equation}
Equation (\ref{eq:nonlinear}) is known as the Ermakov-Pinney equation (\cite{Ermakov1}, \cite{Pinney1}) and it appears in several important physical contexts including quantum cosmology, quantum field theory, nonlinear elasticity, and nonlinear optics (see for instance \cite{Kevre1}). 

Our strategy for solving (\ref{eq:nonlinear})-(\ref{eq:ivomega}) is to first consider the associated linear, homogeneous problem
\begin{equation}
\label{eq:linear}
\begin{split}
\ddot y(\alpha,t)+f(t)y(\alpha,t)=0.
\end{split}
\end{equation}
Suppose we have two linearly independent solutions, $\phi_1(t)$ and $\phi_2(t)$, to (\ref{eq:linear}) satisfying $\phi_1(0)=\dot\phi_2(0)=1$ and $\dot\phi_1(0)=\phi_2(0)=0$. Then by Abel's formula, $W(\phi_1,\phi_2)\equiv1$, $t\geq0$, for $W(u_1,u_2)$ the wronskian of $u_1$ and $u_2$. We look for solutions to (\ref{eq:linear}), satisfying appropriate initial data, of the form
\begin{equation}
\label{eq:linearsol1}
\begin{split}
y(\alpha,t)=c_1(\alpha)\phi_1(t)+c_2(\alpha)\phi_2(t)
\end{split}
\end{equation}
where, by reduction of order,
\begin{equation}
\label{eq:phi2}
\begin{split}
\phi_2(t)=\phi_1(t)\eta(t),\,\,\,\,\,\,\,\,\,\,\,\,\,\,\eta(t)=\int_0^t{\frac{ds}{\phi_1(s)^2}}.
\end{split}
\end{equation}
The above reduces (\ref{eq:linearsol1}) to
\begin{equation}
\label{eq:linearsol2}
\begin{split}
y(\alpha,t)=\phi_1(t)(c_1(\alpha)+c_2(\alpha)\eta(t)).
\end{split}
\end{equation}
Now let
\begin{equation}
\label{eq:omega2}
\begin{split}
\omega(\alpha,t)=z(\eta(t))y(\alpha,t)
\end{split}
\end{equation}
for some function $z(\cdot)$ to be determined. Note that (\ref{eq:ivomega}), (\ref{eq:linearsol2}) and (\ref{eq:omega2}), along with the initial values for $\phi_1$ and $\eta$, imply that
\begin{equation}
\label{eq:constants1}
\begin{split}
c_1(\alpha)=\frac{1}{z(0)},\,\,\,\,\,\,\,\,\,\,\,\,\,\,\,\,\,c_2(\alpha)=-\frac{z'(0)+\lambda z(0)u_0'(\alpha)}{z(0)^2}
\end{split}
\end{equation}
for $z(0)\neq0$ and $'=\frac{d}{d\eta}$. For simplicity we set $z(0)=1$ and $z'(0)=0$; this yields
\begin{equation}
\label{eq:constants2}
\begin{split}
c_1(\alpha)=1,\,\,\,\,\,\,\,\,\,\,\,\,\,c_2(\alpha)=-\lambda u_0'(\alpha),
\end{split}
\end{equation}
so that (\ref{eq:linearsol2}) may be written as
\begin{equation}
\label{eq:y}
\begin{split}
y(\alpha,t)=\phi_1(t)\mathcal{J}(\alpha,t)
\end{split}
\end{equation}
for
\begin{equation}
\label{eq:j}
\begin{split}
\mathcal{J}(\alpha,t)=1-\lambda\eta(t)u_0'(\alpha),\quad\,\,\,\,\,\,\,\,\,\,\,\,\,\,\,\,\mathcal{J}(\alpha,0)=1.
\end{split}
\end{equation}
Next, plugging (\ref{eq:omega2}) into (\ref{eq:nonlinear}) and recalling that $y$ in (\ref{eq:y}) satisfies (\ref{eq:linear}), we obtain, after simplification,
\begin{equation}
\label{eq:ten}
\begin{split}
\mathcal{J}(\alpha,t)z''(\eta)-2\lambda u_0'(\alpha)z'(\eta)=\beta(\alpha)(z(\eta)\mathcal{J}(\alpha,t))^{-3}
\end{split}
\end{equation}
which, for 
\begin{equation}
\label{eq:mu}
\begin{split}
\mu(\alpha,\eta(t))=z(\eta(t))\mathcal{J}(\alpha,t),
\end{split}
\end{equation}
reduces to
\begin{equation}
\label{eq:eleven}
\begin{split}
\mu_{\eta\eta}=\beta(\alpha)\mu^{-3}
\end{split}
\end{equation}
complemented by the initial values
\begin{equation}
\label{eq:muvalues}
\begin{split}
\mu(\alpha,0)=1,\,\,\,\,\,\,\,\,\,\,\,\,\,\,\,\,\,\,\,\,\,\,\mu_{\eta}(\alpha,0)=-\lambda u_0'(\alpha).
\end{split}
\end{equation}
Rewriting (\ref{eq:eleven}) as a first-order equation leads to
\begin{equation}
\label{eq:h0}
\begin{split}
\frac{\partial}{\partial\mu}h(\alpha,\mu)^2=2\beta(\alpha)\mu^{-3},\,\,\,\,\,\,\,\,\,\,\,\,\,h(\alpha,\mu)=\mu_{\eta},
\end{split}
\end{equation}
which we integrate to find
\begin{equation}
\label{eq:hlast}
\begin{split}
\mu_{\eta}(\alpha,\eta)^2=C(\alpha)-\beta(\alpha)\mu(\alpha,\eta)^{-2}
\end{split}
\end{equation}
for
\begin{equation}
\label{eq:c0}
\begin{split}
C(\alpha)=\lambda\left(\lambda u_0'(\alpha)^2-\kappa\rho_0(\alpha)^2\right),
\end{split}
\end{equation}
by (\ref{eq:eight})ii) and (\ref{eq:muvalues}). Now, (\ref{eq:hlast}) gives
\begin{equation}
\label{eq:twelve}
\begin{split}
\mu_{\eta}(\alpha,\eta)=\pm\frac{\sqrt{C(\alpha)\mu^2-\beta(\alpha)}}{\mu(\alpha,\eta)}.
\end{split}
\end{equation}
Solving the above separable equation yields
\begin{equation*}
\begin{split}
\pm C(\alpha)\eta(t)=\sqrt{C(\alpha)z(\eta(t))^2\mathcal{J}(\alpha,t)^2-\beta(\alpha)}-\sqrt{C(\alpha)-\beta(\alpha)}\,,
\end{split}
\end{equation*}
which we solve for $z^2$ to obtain
\begin{equation}
\label{eq:z0}
\begin{split}
z(\eta(t))^2=\frac{C(\alpha)\eta(t)^2\pm\left|2\lambda u_0'(\alpha)\right|\eta(t)+1}{\mathcal{J}(\alpha,t)^2}
\end{split}
\end{equation}
for fixed $\alpha\in[0,1]$ and $\lambda\neq0$. But setting $\eta=0$ in (\ref{eq:twelve}) and using (\ref{eq:muvalues}) and
$$\sqrt{C(\alpha)-\beta(\alpha)}=\left|\lambda u_0'(\alpha)\right|,$$
implies that 
\begin{equation}
\label{eq:z}
\begin{split}
z(\eta(t))^2=\frac{\mathcal{Q}(\alpha,t)}{\mathcal{J}(\alpha,t)^2}
\end{split}
\end{equation}
for 
\begin{equation}
\label{eq:Q}
\begin{split}
\mathcal{Q}(\alpha,t)=C(\alpha)\eta(t)^2-2\lambda u_0'(\alpha)\eta(t)+1,
\end{split}
\end{equation}
which we use on (\ref{eq:omega}), (\ref{eq:omega2}) and (\ref{eq:y}), to obtain
\begin{equation}
\label{eq:a0}
\begin{split}
\gamma_{\alpha}(\alpha,t)=\left[\phi_1(t)^{2}\mathcal{Q}(\alpha,t)\right]^{-\frac{1}{2\lambda}}.
\end{split}
\end{equation}
To determine $\phi_1$ above, we note that uniqueness of solution to (\ref{eq:charac}) and periodicity of $u$ requires that 
$$\gamma(\alpha+1,t)=1+\gamma(\alpha,t)$$
for all $\alpha\in[0,1]$ and as long as $u$ is defined. Particularly, this implies that the jacobian has mean one in $[0,1]$, namely
\begin{equation}
\label{eq:meanone}
\begin{split}
\int_0^1{\gamma_{\alpha}(\alpha,t)\,d\alpha}\equiv1.
\end{split}
\end{equation}
For $i=0,1,...,n$, set
\begin{equation}
\label{eq:p}
\begin{split}
\mathcal{P}_i(\alpha,t)=\mathcal{Q}(\alpha,t)^{-i-\frac{1}{2\lambda}}
\end{split}
\end{equation}
and 
\begin{equation}
\label{eq:meanp}
\begin{split}
\bar{\mathcal{P}}_i(t)=\int_0^1{\mathcal{P}_i(\alpha,t)\,d\alpha},\,\,\,\,\,\,\,\,\,\,\,\,\,\,\,\,\bar{\mathcal{P}}_i(0)=1.
\end{split}
\end{equation}
Integrating (\ref{eq:a0}) in $\alpha$ and using (\ref{eq:meanone}) gives
\begin{equation}
\label{eq:phi1}
\begin{split}
\phi_1(t)=\bar{\mathcal{P}}_0(t)^{\lambda},
\end{split}
\end{equation}
which we substitute back into (\ref{eq:a0}) to find
\begin{equation}
\label{eq:jacobian}
\begin{split}
\gamma_{\alpha}=\mathcal{P}_0/\bar{\mathcal{P}}_0.
\end{split}
\end{equation}
Using the above on (\ref{eq:two})i) and (\ref{eq:rho0}), yields
\begin{equation}
\label{eq:ux0}
\begin{split}
u_x(\gamma(\alpha,t),t)=\frac{\mathcal{\dot P}_0}{\mathcal{P}_0}-\frac{\mathcal{\dot{\bar P}}_0}{\mathcal{\bar P}_0}
\end{split}
\end{equation}
and
\begin{equation}
\label{eq:rho00}
\begin{split}
\rho(\gamma(\alpha,t),t)=\rho_0(\alpha)\cdot\left(\mathcal{P}_0/\bar{\mathcal{P}}_0\right)^{2\lambda}
\end{split}
\end{equation}
with the strictly increasing function $\eta(t)$ satisfying the IVP
\begin{equation}
\label{eq:etaivp}
\begin{split}
\dot\eta(t)=\bar{\mathcal{P}}_0(t)^{-2\lambda},\,\,\,\,\,\,\,\,\,\,\,\,\,\,\,\,\eta(0)=0.
\end{split}
\end{equation}
In view of the above, we have now obtained a complete description of the two-component solution to (\ref{eq:hs}) valid for parameters $(\lambda,\kappa)\in\mathbb{R}\backslash\{0\}\times\mathbb{R}$ and in terms of the initial data. Explicitly, and after simplification, (\ref{eq:ux0}) may be written as
\begin{equation}
\label{eq:ux}
\begin{split}
u_x(\gamma(\alpha,t),t)=\frac{\mathcal{\bar P}_0(t)^{^{-2\lambda}}}{\lambda}\left\{\frac{\lambda u_0'(\alpha)-\eta(t)C(\alpha)}{\mathcal{Q}(\alpha,t)}-\frac{1}{\mathcal{\bar P}_0(t)}\int_0^1{\frac{\lambda u_0'(\alpha)-\eta(t) C(\alpha)}{\mathcal{Q}(\alpha,t)^{1+\frac{1}{2\lambda}}}d\alpha}\right\},
\end{split}
\end{equation}
while (\ref{eq:rho00}) becomes
\begin{equation}
\label{eq:rho}
\begin{split}
\rho(\gamma(\alpha,t),t)=\frac{\rho_0(\alpha)}{\mathcal{Q}(\alpha,t)}\left(\int_0^1{\frac{d\alpha}{\mathcal{Q}(\alpha,t)^{\frac{1}{2\lambda}}}}\right)^{-2\lambda}.
\end{split}
\end{equation}
Lastly, from (\ref{eq:etaivp}) define $0<t_*\leq+\infty$ as
\begin{equation}
\label{eq:t*}
\begin{split}
t_*\equiv\lim_{\eta\uparrow\eta_*}t(\eta)=\lim_{\eta\uparrow\eta_*}\int_0^{\eta}{\left(\int_0^1{\frac{d\alpha}{(C(\alpha)\sigma^2-2\lambda u_0'(\alpha)\sigma+1)^{\frac{1}{2\lambda}}}}\right)^{2\lambda}d\sigma}
\end{split}
\end{equation}
for some $\eta_*\in\mathbb{R}^+$ to be defined. We will get back to formula (\ref{eq:t*}) once regularity is examined.

\begin{remark}
\label{rem:dirirem}
The above formulae is also valid for $u$ satisfying Dirichlet boundary conditions
$$u(0,t)=u(1,t)=0,$$
which implies that, for as long as $u$ is defined, those characteristics that originate at the boundary stay at the boundary,
$$\gamma(0,t)\equiv0,\quad\,\,\,\,\,\,\,\,\,\,\,\,\,\,\gamma(1,t)\equiv1.$$
As a result, the jacobian still has mean one in $[0,1]$ and we may proceed as in the periodic case.
\end{remark}

\begin{remark}
Integrating the jacobian (\ref{eq:jacobian}) in $\alpha$ yields the trajectories
\begin{equation}
\label{eq:int1}
\begin{split}
\gamma(\alpha,t)=\gamma(0,t)+\frac{1}{\bar{\mathcal{P}}_0(t)}\int_0^{\alpha}{\mathcal{P}_0(y,t)\,dy},
\end{split}
\end{equation}
where $\gamma(0,t)=u(\gamma(0,t),t)$. For Dirichlet boundary conditions, $\gamma(0,t)\equiv0$, and so $u\circ\gamma$ is obtained from $\dot\gamma=u\circ\gamma$, namely
\begin{equation*}
\begin{split}
u(\gamma(\alpha,t),t)=\left(\frac{1}{\bar{\mathcal{P}}_0(t)}\int_0^{\alpha}{\mathcal{P}_0(y,t)\,dy}\right)^{\cdot{}}.
\end{split}
\end{equation*}
However, for periodic solutions $\gamma(0,t)\equiv0$ is generally not true, and so an extra condition is needed to determine $\gamma(0,t)$. Consequently, to have a completely determined description of the problem, we will assume for the remainder of the paper that the first component solution, $u$, has mean zero in $[0,1]$,
\begin{equation}
\label{eq:mean0}
\begin{split}
\int_0^1{u(x,t)\,dx}\equiv0.
\end{split}
\end{equation}
It is known in the case of the giPJ equation (see for instance \cite{Sarria1} or \cite{Aconstantin1}), that (\ref{eq:mean0}) arises naturally for initial data satisfying certain symmetries that are preserved by the PDE. Moreover, in the context of the Euler equations, (\ref{eq:mean0}) holds as long as the pressure is periodic in one of its coordinate variables (\cite{Saxton1}).

\end{remark}

\section{Blow-up, Global Estimates, and Convergence to Steady States}\hfill
\label{sec:regularity}

In this section we study the evolution in time of (\ref{eq:ux}) and (\ref{eq:rho}) for parameters $(\lambda,\kappa)\in\mathbb{R}\backslash\{0\}\times\mathbb{R}\backslash\{0\}$, particularly, their finite-time blow-up, persistence for all time, and convergence to steady states in finite or infinite time. Most of the regularity results established here, as well as in the Appendix, apply to initial data ($u_0,\rho_0$) that is either smooth or belongs to a particular class of smooth functions; however, from the solution formulae derived in \S\ref{sec:solution}, these results can actually be extended to larger classes of nonsmooth functions, specifically, bounded functions $u_0'(x)$ and $\rho_0(x)$ which are, at least, $C^0[0,1]\, a.e.$ 

First, in \S\ref{subsec:notation} we introduce some new terminology and establish useful preliminary results. Then \S\ref{subsec:regularityak<0} examines the case of parameter values $\lambda\kappa<0$, whereas, those satisfying $\lambda\kappa>0$ are deferred to \S\ref{subsec:regularityak>0}. More particularly, Theorems \ref{thm:global1ak<0} and \ref{thm:blow1ak<0} in \S\ref{subsec:regularityak<0} establish criteria for the convergence, in finite time, of solutions to steady states as well as their finite-time blow-up, respectively. Then Theorems \ref{thm:ak>0lambdanegkappapos} and \ref{thm:singular} in \S\ref{subsec:regularityak>0} examine finite-time blow-up and global existence in time for $(\lambda,\kappa)\in\mathbb{R}^-\times\mathbb{R}^-$ and respectively $(\lambda,\kappa)\in\mathbb{R}^+\times\mathbb{R}^+$, with the latter case leading to the ``most singular'' solutions. Lastly, we note that in \S\ref{subsec:regularityak>0} we treat the case where (\ref{eq:Q}) vanishes earliest at some $\eta-$value of single multiplicity; the case of a double root is deferred to Corollaries \ref{coro:special1} and  \ref{coro:singular2} in Appendix \ref{sec:doublemult}.

\subsection{Notation and Preliminary Results}\hfill
\label{subsec:notation}

For $C(\alpha)$ as in (\ref{eq:c0}), let
\begin{equation}
\label{eq:omegaset}
\begin{split}
\Omega\equiv\{\alpha\in[0,1]\,|\,C(\alpha)=0\}
\end{split}
\end{equation}
and 
\begin{equation}
\label{eq:sigma}
\begin{split}
\Sigma\equiv\{\alpha\in[0,1]\,|\,C(\alpha)\neq0\},
\end{split}
\end{equation}
so that $[0,1]=\Omega\cup\Sigma$. Although we allow for $\Omega=\emptyset$, we will assume that $C(\alpha)$ is not identically zero, namely, $\Omega\neq[0,1]$. Moreover, and mostly for simplicity, we suppose that for $\Omega\neq\emptyset$, $C(\alpha)$ vanishes at a finite number of locations\footnote[2]{The reader may refer to Appendix \ref{sec:simplecases} for the cases $\Omega=[0,1]$, as well as $\lambda=0$ and/or $\kappa=0$. For now, we note that these special cases lead to regularity results that have already been established (see, e.g., \cite{Sarria1} or \cite{Sarria2}), or to solution formulas which simplify greatly, in some cases leading to trivial solutions.}. Lastly, we limit our analysis to the case $\rho_0(x)\not\equiv0$, which by (\ref{eq:rho}) implies $\rho(x,t)\not\equiv0$. For The case of $\rho\equiv0$ see, for instance, \cite{Sarria1} and \cite{Sarria2}.

From the formulae (\ref{eq:ux}) and (\ref{eq:rho}), we see that the issue to consider when studying the evolution in time of these quantities is not just a possible vanishing of the quadratic $\mathcal{Q}$ in (\ref{eq:Q}), but also what are the locations in $[0,1]$ that yield the least, positive $\eta-$value for which vanishing occurs. With this in mind, note that whenever $\alpha\in\Omega$, $\mathcal{Q}$ reduces to a linear function of $\eta$. Instead, if $\alpha\in\Sigma$, then $C(\alpha)\neq0$ and we may factor the quadratic into terms whose zeroes have either single or double multiplicity. More particularly, since the discriminant of (\ref{eq:Q}) is given by
\begin{equation}
\label{eq:D}
\begin{split}
\mathcal{D}(\alpha)=4\lambda\kappa\rho_0(\alpha)^2,
\end{split}
\end{equation}
$\mathcal{Q}$ admits three representations. The first is
\begin{equation}
\label{eq:fac1}
\begin{split}
\mathcal{Q}(\alpha,t)=(1-\eta(t)g_1(\alpha))(1-\eta(t)g_2(\alpha))
\end{split}
\end{equation}
for
\begin{equation}
\label{eq:g}
g_{1}(\alpha)=\lambda u_0'(\alpha)+\sqrt{\lambda\kappa}\left|\rho_0(\alpha)\right|,\,\,\,\,\,\,\,\,\,\,\,\,\,\,\,g_{2}(\alpha)=\lambda u_0'(\alpha)-\sqrt{\lambda\kappa}\left|\rho_0(\alpha)\right|
\end{equation}
and valid whenever $\alpha\in\Sigma$ is such that $\rho_0(\alpha)\neq0$. The second is given by
\begin{equation}
\label{eq:fac2}
\begin{split}
\mathcal{Q}(\alpha,t)=(1-\lambda\eta(t)u_0'(\alpha))^2
\end{split}
\end{equation}
for $\alpha\in\Sigma$ and $\rho_0(\alpha)=0$, while the third representation for $\mathcal{Q}$, when $\alpha\in\Omega$, is
\begin{equation}
\label{eq:fac3}
\begin{split}
\mathcal{Q}(\alpha,t)=1-2\lambda u_0'(\alpha)\eta(t).
\end{split}
\end{equation}
Now, for $0<t_*\leq+\infty$ as in (\ref{eq:t*}), let 
\begin{equation}
\label{eq:eta*}
\begin{split}
0<\eta_*<+\infty
\end{split}
\end{equation}
denote the $\eta-$value
\begin{equation}
\label{eq:etatoeta*}
\begin{split}
\lim_{t\uparrow t_*}\eta(t)=\eta_*
\end{split}
\end{equation}
and
\begin{equation}
\label{eq:overlinealpha}
\begin{split}
\overline\alpha\in[0,1]
\end{split}
\end{equation}
a finite number of points in the unit interval such that
\begin{equation}
\label{eq:earliest}
\begin{split}
\lim_{t\uparrow t_*}\mathcal{Q}(\alpha,t)=0.
\end{split}
\end{equation}
Below we remark on the behaviour of $\mathcal{Q}$ relative to a possible, earliest root $\eta_*$ and the location(s) $\overline\alpha$ which may lead to $\eta_*$. 

\vspace{0.05in}
\textbf{Case $\lambda\kappa<0$}
\vspace{0.05in}

Clearly, if $\lambda\kappa<0$ and $\rho_0$ never vanishes, (\ref{eq:D}) implies that $0<\mathcal{Q}<+\infty$ for all $\alpha\in[0,1]$ and $\eta\in\mathbb{R}^+$ (recall that $\mathcal{Q}(\alpha,0)\equiv1$). However, if $\lambda\kappa<0$ and $\rho_0$ is zero somewhere in $[0,1]$, then (\ref{eq:fac2}) holds at those locations and $\mathcal{Q}$ may now have roots of multiplicity two. We will examine both cases in detail in \S\ref{subsec:regularityak<0}.

\vspace{0.05in}
\textbf{Case $\lambda\kappa>0$}
\vspace{0.05in}

Next suppose $\lambda\kappa>0$. Then the discriminant (\ref{eq:D}) satisfies $\mathcal{D}(\alpha)\geq0$ and $\mathcal{Q}$ now admits roots $\eta_*$ of either single or double multiplicity. \emph{Relative to our choice of $\alpha$ and initial data, the above representations (\ref{eq:fac1})-(\ref{eq:fac3}) for $\mathcal{Q}$ will play an important role in our study of regularity because the $\lambda-$values for which the integral terms in (\ref{eq:ux}) and (\ref{eq:rho}) either converge or diverge as $\eta$ approaches $\eta_*$ may, in turn, depend on the multiplicity of $\eta_*$}. Therefore, and as we will see in later estimates, it will only be necessary to consider representations (\ref{eq:fac1}) (or (\ref{eq:fac3})) and (\ref{eq:fac2}), the single and respectively double multiplicity cases. However, the case of $\eta_*$ a double root, as it turns out, has already been studied. In fact, the simplest instance in which this occurs is when $\rho_0(\alpha)\equiv0$, so that (\ref{eq:Q}) reduces to (\ref{eq:fac2}) for all $\alpha\in[0,1]$. As discussed in \S\ref{sec:intro}, for $\rho_0\equiv0$, (\ref{eq:hs}) becomes the giPJ equation, studied extensively in \cite{Sarria1}, \cite{Sarria2}, and the references therein. Moreover, if $\rho_0\not\equiv0$ but $u_0$ and $\rho_0$ are such that $\mathcal{Q}$ takes the form (\ref{eq:fac2}) (i.e. $\Sigma\neq\emptyset$),\footnote[3]{See Appendix \ref{sec:doublemult} for remarks on how plausible this case actually is.} then $\eta_*=\frac{1}{\lambda u_0'(\overline\alpha)}$ where $u_0'(\overline\alpha)$ represents the negative minimum or positive maximum of $u_0'(\alpha)$ over $\Sigma$ when $\lambda<0$ or respectively $\lambda>0$, assuming each exists in the corresponding case. If so, it follows that solutions still retain the giPJ equation behaviour from the $\rho_0\equiv0$ case. Indeed, the latter assumptions imply that the space-dependent term in (\ref{eq:ux}) will diverge earliest at $\alpha=\overline\alpha$ as $\eta\uparrow\eta_*$,
\begin{equation}
\label{eq:spacedouble}
\begin{split}
\frac{\lambda u_0'(\overline\alpha)-\eta(t)C(\overline\alpha)}{\mathcal{Q}(\overline\alpha,t)}=\frac{1-\lambda\eta(t) u_0'(\overline\alpha)}{(1-\lambda\eta(t)u_0'(\overline\alpha))^2}=\frac{\lambda u_0'(\overline\alpha)}{1-\lambda\eta(t)u_0'(\overline\alpha)}\to+\infty.
\end{split}
\end{equation}
Similarly, for both $r>0$ and $\eta_*-\eta>0$ small, the integral terms satisfy
\begin{equation}
\label{eq:podouble}
\begin{split}
\mathcal{\bar{P}}_0(t)\sim\int_{\overline\alpha-r}^{\overline\alpha+r}{\frac{d\alpha}{(1-\lambda\eta(t)u_0'(\alpha))^{\frac{1}{\lambda}}}},
\end{split}
\end{equation}
while
\begin{equation}
\label{eq:otherdouble}
\begin{split}
\int_{0}^{1}{\frac{\lambda u_0'(\alpha)-\eta(t)C(\alpha)}{\mathcal{Q}(\alpha,t)^{1+\frac{1}{2\lambda}}}d\alpha}\sim\int_{\overline\alpha-r}^{\overline\alpha+r}{\frac{\lambda u_0'(\alpha)}{(1-\lambda\eta(t)u_0'(\alpha))^{1+\frac{1}{\lambda}}}d\alpha}.
\end{split}
\end{equation}
Consequently, if $\mathcal{Q}$ has as its earliest zero, $\eta_*$, a root of double multiplicity, then for $\eta_*-\eta>0$ small the above estimates imply that the time-evolution of (\ref{eq:ux}) can be examined, alternatively, via the simpler estimate
\begin{equation}
\label{eq:uxdouble}
\begin{split}
&u_x(\gamma(\alpha,t),t)\sim\left(\int_{\overline\alpha-r}^{\overline\alpha+r}{\frac{d\alpha}{(1-\lambda\eta(t)u_0'(\alpha))^{\frac{1}{\lambda}}}}\right)^{^{-2\lambda}}
\\
&\left\{\frac{u_0'(\alpha)}{1-\lambda\eta(t)u_0'(\alpha)}-\left(\int_{\overline\alpha-r}^{\overline\alpha+r}{\frac{d\alpha}{(1-\lambda\eta(t)u_0'(\alpha))^{\frac{1}{\lambda}}}}\right)^{^{-1}}\left(\int_{\overline\alpha-r}^{\overline\alpha+r}{\frac{\lambda u_0'(\alpha)\,d\alpha}{(1-\lambda\eta(t)u_0'(\alpha))^{1+\frac{1}{\lambda}}}}\right)\right\}.
\end{split}
\end{equation}
The right-hand-side of (\ref{eq:uxdouble}) was studied in \cite{Sarria1} and \cite{Sarria2} in connection to the giPJ equation. Thus, for the double root case estimates on the behaviour of the integrals (\ref{eq:podouble}) and (\ref{eq:otherdouble}) as $\eta\uparrow\eta_*$ are readily available in these works, and we simply direct the reader to Corollaries \ref{coro:special1} and \ref{coro:singular2} in Appendix \ref{sec:doublemult} for the corresponding regularity results. However, and for the sake of completeness, we will give a brief outline on how to obtain these estimates in the proof of Theorem \ref{thm:blow1ak<0}.

In light of the above discussion, in \S\ref{subsec:regularityak>0} we will only be concerned with the representations (\ref{eq:fac1}) and (\ref{eq:fac3}), the case where $\eta_*$ is a single root of $\mathcal{Q}$. Accordingly, define
\begin{equation}
\label{eq:M2}
M\equiv\max_{\alpha\in\Omega}\{2\lambda u_0'(\alpha)\}
\end{equation}
and
\begin{equation}
\label{eq:max1}
N\equiv\max_{\alpha\in\Sigma}g_1(\alpha).
\end{equation}
Notice that, while $M$ always exists, $N$ may not due to the vanishing of $C(\alpha)$ at finitely many points, which implies that $\Sigma$ is an open set. Also, note that there is no need to consider an eventual vanishing of the linear term in (\ref{eq:fac1}) involving $g_2$. Indeed, if the initial data is such that $g_2(\alpha)\leq0$, then due to the strictly increasing nature of $\eta(t)$ and $\eta(0)=0$, such term will never vanish. Moreover, if $g_2$ is somewhere positive, it is easy to see that, over those $\alpha\in\Sigma$ where $\rho_0(\alpha)\neq0$, we have $g_1(\alpha)>g_2(\alpha)$. As a result, for parameters $\lambda\kappa>0$, we conclude that there are two cases of interest concerning the least value $\eta_*>0$ at which $\mathcal{Q}$ vanishes. If $N$ does not exist, or if it does but $M>N$, we set
\begin{equation}
\label{eq:etaomega}
\eta_*=\frac{1}{M},
\end{equation}
whereas, for $N>M$, we let 
\begin{equation}
\label{eq:etasigma}
\eta_*=\frac{1}{N}.
\end{equation}
See below for two simple examples involving single roots.

\vspace{0.1in}
\textbf{Single Multiplicity Roots}
\vspace{0.1in}

\emph{Example 1.}\,\, For $(\lambda,\kappa)=(1,1)$, take $u_0'(\alpha)=\cos(2\pi\alpha)$ and $\rho_0(\alpha)\equiv1$. Then $C(\alpha)=0$ in (\ref{eq:c0}) implies that $\Omega=\{0,1/2,1\}$, the points where $\cos(2\pi\alpha)=\pm1$. Then $M=\max_{\Omega}\{2\cos(2\pi\alpha)\}=2$ occurs at both end-points $\alpha=0,1$. Now $g_1(\alpha)=\cos(2\pi\alpha)+1$, and so $N=\max_{\Sigma}g_1(\alpha)$ does not exist since the boundary points lie in $\Omega$. We conclude that 
\begin{equation}
\label{eq:q1}
\mathcal{Q}(\alpha,t)=(\cos(2\pi\alpha)^2-1)\eta^2-2\cos(2\pi\alpha)\eta+1\to0
\end{equation}
earliest at the boundary $\overline\alpha=\{0,1\}$ as $\eta\uparrow\eta_*=\frac{1}{M}=\frac{1}{2}$. For all other $\alpha\in(0,1)$ and $0\leq\eta\leq\eta_*$, $\mathcal{Q}>0$. See Figure (\ref{fig:q})-left below. 

\vspace{0.1in}

\emph{Example 2.}\,\, For $(\lambda,\kappa)$ and $u_0'$ as above, now let $\rho_0(\alpha)\equiv\frac{1}{2}$. Then $C(\alpha)=0$ gives $\Omega=\{1/6, 1/3, 2/3, 5/6\}$, and so $M=\max_{\Omega}\{2\cos(2\pi\alpha)\}=1$ is attained at $\alpha=1/6,5/6$. Now, this time $g_1(\alpha)=\cos(2\pi\alpha)+\frac{1}{2}$ so that $N=\max_{\Sigma}g_1(\alpha)=\frac{3}{2}$ occurs at both end-points, which, as opposed to the previous example, now lie in $\Sigma$. Since $N=\frac{3}{2}>1=M$, we have $\eta_*=\frac{1}{N}=\frac{2}{3}$ and 
\begin{equation}
\label{eq:q2}
\mathcal{Q}(\alpha,t)=\left(\cos(2\pi\alpha)^2-\frac{1}{4}\right)\eta^2-2\cos(2\pi\alpha)\eta+1\to0
\end{equation}
earliest at the boundary $\overline\alpha=\{0,1\}$ as $\eta\uparrow\eta_*$, whereas, for $\alpha\in(0,1)$ and $0\leq\eta\leq\eta_*$, $\mathcal{Q}>0$. See Figure (\ref{fig:q})-right below.

\begin{center}
\begin{figure}[!ht]
\includegraphics[scale=0.3]{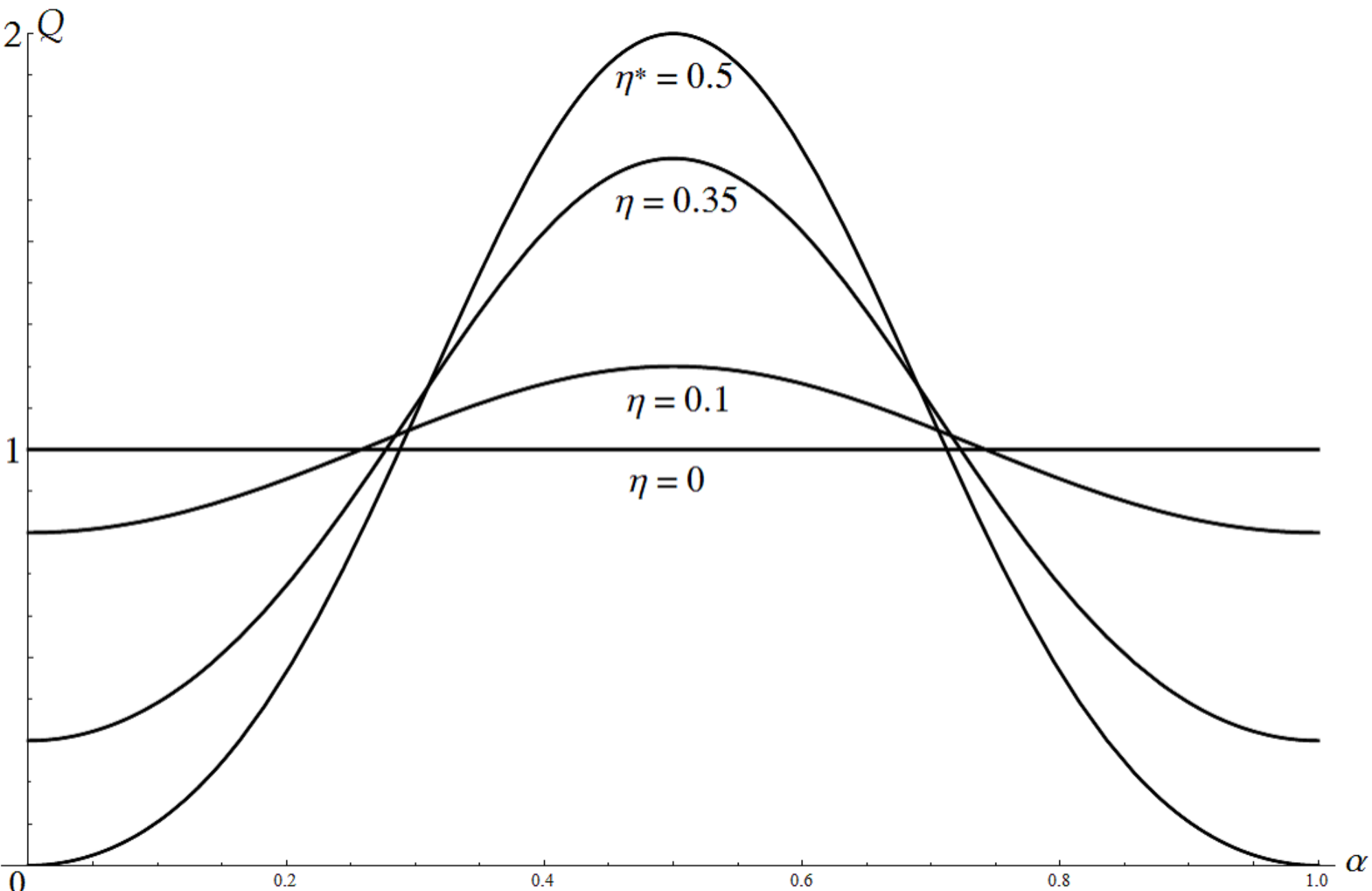} 
\includegraphics[scale=0.3]{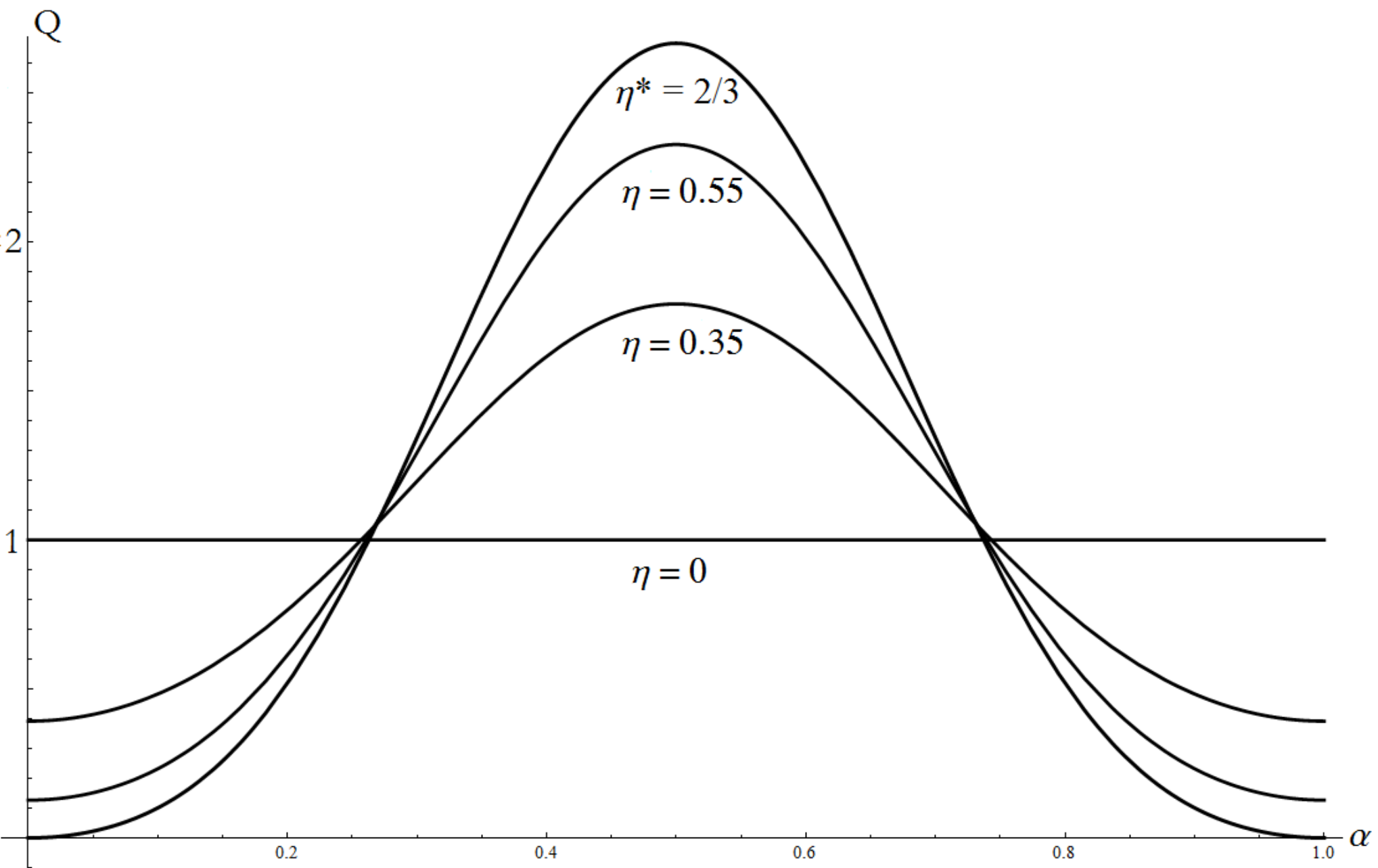} 
\caption{Plots of (\ref{eq:q1}) and (\ref{eq:q2}). Both vanish first at the boundary as $\eta$ approaches $1/2$ and $2/3$ respectively. For $\alpha\in(0,1)$ and $0\leq\eta\leq\eta_*$, $\mathcal{Q}>0$.}
\label{fig:q}
\end{figure}
\end{center}

\subsection{Regularity Results for $\lambda\kappa<0$}\hfill
\label{subsec:regularityak<0}

For parameters $\lambda\kappa<0$, Theorem \ref{thm:global1ak<0} below establishes conditions on the initial data for which both component solutions, (\ref{eq:ux}) and (\ref{eq:rho}), converge in finite time to steady states $U_{\infty}$ and respectively $P_{\infty}$, given by
\begin{equation}
\label{eq:steady}
\begin{split}
U_{\infty}(\alpha)=-\left(\frac{\mathcal{N}}{\mathcal{M}^{1+2\lambda}}+\frac{u_0'(\alpha)}{\rho_0(\alpha)}\,P_{\infty}(\alpha)\right),\,\,\,\,\,\,\,\,\,\,\,\,\,\,\,\,\,\,\,\,\,P_{\infty}(\alpha)=\frac{\rho_0(\alpha)}{C(\alpha)\mathcal{M}^{2\lambda}}
\end{split}
\end{equation}
where $C(\alpha)>0$ is as in (\ref{eq:c0}) and the real numbers $\mathcal{M}>0$ and $\mathcal{N}$ are defined as
\begin{equation}
\label{eq:steady2}
\begin{split}
\mathcal{M}\equiv\int_0^1{\frac{d\alpha}{C(\alpha)^{\frac{1}{2\lambda}}}},\,\,\,\,\,\,\,\,\,\,\,\,\,\,\,\,\,\,\,\,\,\,\,\,\,\,\mathcal{N}\equiv\int_0^1{\frac{u_0'(\alpha)}{C(\alpha)^{1+\frac{1}{2\lambda}}}d\alpha}.
\end{split}
\end{equation}
In contrast, Theorem \ref{thm:blow1ak<0} provides criteria leading to the existence of finite-time blow-up solutions. The reader may refer to \S\ref{sec:examples} for specific examples.

\subsubsection{Convergence to Steady States for $\lambda\kappa<0$}\hfill
\label{subsubsec:global1ak<0}

\begin{theorem}
\label{thm:global1ak<0}
Consider the initial boundary value problem (\ref{eq:hs})-(\ref{eq:pbc}) for parameters $\lambda\kappa<0$ and initial data $u_0'(x)$ and $\rho_0(x)$ both bounded and, at least, $C^0[0,1]\,\, a.e.$ If $\rho_0(\alpha)$ never vanishes, there exists a finite $t_{\infty}>0$ such that (\ref{eq:ux}) and (\ref{eq:rho}) converge to the steady states in (\ref{eq:steady}) as $t\uparrow t_{\infty}$. Similarly if $\lambda u_0'(\alpha_i)\leq0$ for $\alpha_i$\,, $1\leq i\leq n$, the locations where $\rho_0(\alpha)$ vanishes.
\end{theorem}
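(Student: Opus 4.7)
\emph{Proof proposal.} The plan is to establish that, under the hypothesis $\lambda\kappa<0$ together with the sign condition on $u_0'$ at zeros of $\rho_0$, the quadratic $\mathcal{Q}$ stays strictly positive on $[0,1]\times[0,\infty)$ so that $\eta$ can be extended to $+\infty$, that the corresponding time $t_\infty$ in (\ref{eq:t*}) is nevertheless finite, and that the two components (\ref{eq:ux}) and (\ref{eq:rho}) converge to $U_\infty$ and $P_\infty$ as $\eta\uparrow\infty$ by means of careful asymptotic expansions in $1/\eta$.

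First I would verify the global positivity of $\mathcal{Q}$. Since $\lambda\kappa<0$, the discriminant $\mathcal{D}(\alpha)=4\lambda\kappa\rho_0(\alpha)^2$ from (\ref{eq:D}) is nonpositive. Where $\rho_0(\alpha)\neq 0$, $\mathcal{D}(\alpha)<0$ strictly, so $\mathcal{Q}(\alpha,\cdot)$ has no real roots; starting from $\mathcal{Q}(\alpha,0)=1$ it remains strictly positive on $\mathbb{R}$. At each of the finitely many $\alpha_i$ where $\rho_0$ vanishes, (\ref{eq:fac2}) gives $\mathcal{Q}(\alpha_i,t)=(1-\lambda\eta(t)u_0'(\alpha_i))^2$, whose only real root $\eta=1/(\lambda u_0'(\alpha_i))$ is non-positive by the hypothesis $\lambda u_0'(\alpha_i)\leq 0$ (and $\mathcal{Q}(\alpha_i,t)\equiv 1$ when $u_0'(\alpha_i)=0$). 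Hence $\mathcal{Q}>0$ throughout and $\eta_*=+\infty$.

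Next I would extract the leading and sub-leading asymptotics of $\bar{\mathcal{P}}_0$. Factoring $\mathcal{Q}(\alpha,\eta)=C(\alpha)\eta^{2}\bigl(1-2\lambda u_0'(\alpha)/(C(\alpha)\eta)+O(\eta^{-2})\bigr)$ at points with $C(\alpha)>0$ and Taylor-expanding $(1+x)^{-s}$ yields, for any fixed $s$,
\[
\mathcal{Q}(\alpha,\eta)^{-s}=C(\alpha)^{-s}\eta^{-2s}\left(1+\frac{2s\lambda u_0'(\alpha)}{C(\alpha)\eta}+O(\eta^{-2})\right).
\]
Taking $s=1/(2\lambda)$ and integrating gives $\bar{\mathcal{P}}_0(\eta)=\mathcal{M}\eta^{-1/\lambda}+\mathcal{N}\eta^{-1/\lambda-1}+O(\eta^{-1/\lambda-2})$, so $\bar{\mathcal{P}}_0(\eta)^{2\lambda}\sim\mathcal{M}^{2\lambda}\eta^{-2}$, which is integrable at infinity; by (\ref{eq:t*}) this yields $t_\infty<\infty$. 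Feeding $\bar{\mathcal{P}}_0^{-2\lambda}\sim\mathcal{M}^{-2\lambda}\eta^{2}$ and $\mathcal{Q}\sim C\eta^{2}$ into (\ref{eq:rho}) gives immediately
\[
\rho(\gamma(\alpha,t),t)=\frac{\rho_0(\alpha)\,\bar{\mathcal{P}}_0^{-2\lambda}}{\mathcal{Q}(\alpha,\eta)}\;\longrightarrow\;\frac{\rho_0(\alpha)}{C(\alpha)\mathcal{M}^{2\lambda}}=P_\infty(\alpha)
\]
as $\eta\uparrow+\infty$, i.e.\ as $t\uparrow t_\infty$ (at points $\alpha_i$ with $\rho_0(\alpha_i)=0$ both sides vanish identically).

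The convergence $u_x\to U_\infty$ is the main obstacle. Both terms in the braces of (\ref{eq:ux}) behave like $-1/\eta$ at leading order, so they must cancel exactly and only the $O(1/\eta^{2})$ residuals survive; multiplied by the diverging prefactor $\bar{\mathcal{P}}_0^{-2\lambda}/\lambda\sim\eta^{2}/(\lambda\mathcal{M}^{2\lambda})$ they produce a finite limit. Using the expansion above with $s=1$ and $s=1+1/(2\lambda)$, one computes
\[
\frac{\lambda u_0'(\alpha)-\eta C(\alpha)}{\mathcal{Q}(\alpha,\eta)}=-\frac{1}{\eta}-\frac{\lambda u_0'(\alpha)}{C(\alpha)\eta^{2}}+O(\eta^{-3}),
\]
\[
\frac{1}{\bar{\mathcal{P}}_0(\eta)}\int_0^1\frac{\lambda u_0'(\alpha)-\eta C(\alpha)}{\mathcal{Q}(\alpha,\eta)^{1+1/(2\lambda)}}\,d\alpha=-\frac{1}{\eta}-\frac{\lambda\mathcal{N}}{\mathcal{M}\eta^{2}}+O(\eta^{-3}).
\]
The $-1/\eta$ terms cancel, the braces reduce to $\lambda\eta^{-2}[\mathcal{N}/\mathcal{M}-u_0'(\alpha)/C(\alpha)]+O(\eta^{-3})$, and multiplication by $\bar{\mathcal{P}}_0^{-2\lambda}/\lambda$ yields a finite pointwise limit that, after substituting $P_\infty(\alpha)=\rho_0(\alpha)/(C(\alpha)\mathcal{M}^{2\lambda})$, is identified with the $U_\infty(\alpha)$ of (\ref{eq:steady}). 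The principal technical points are (i) precise bookkeeping of these second-order Taylor coefficients, so that the constants $\mathcal{M}$ and $\mathcal{N}$ emerge in the claimed combination (a sanity check being the mean-zero condition $\int_0^1 U_\infty(\alpha)\gamma_\alpha^\infty(\alpha)\,d\alpha=0$, with $\gamma_\alpha^\infty=C^{-1/(2\lambda)}/\mathcal{M}$), and (ii) uniformity of the expansions in $\alpha$ to justify interchanging the $\alpha$-integral and the $\eta\to\infty$ limit, which is automatic where $C$ is bounded below and requires only a small localization argument near the isolated points of $\Omega$ where $C$ vanishes.
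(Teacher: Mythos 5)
Your proposal follows the same route as the paper's proof: global positivity of $\mathcal{Q}$ (including the reduction of the second case to the first via (\ref{eq:fac2}) and the sign hypothesis $\lambda u_0'(\alpha_i)\leq0$, exactly as the paper does), large-$\eta$ asymptotics of $\bar{\mathcal{P}}_0$ and of the second integral, substitution into (\ref{eq:ux}) and (\ref{eq:rho}), and finiteness of $t_\infty$. Two of your refinements are genuine improvements. First, the paper infers $t_\infty<\infty$ only from $dt/d\eta\to0$, which by itself proves nothing; your observation that $\bar{\mathcal{P}}_0^{2\lambda}\sim\mathcal{M}^{2\lambda}\eta^{-2}$ is integrable at infinity is the correct completion. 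Second, your consistent second-order expansion differs from the paper's estimate (\ref{eq:large1})ii), which keeps the numerator $\lambda u_0'-\eta C(\alpha)$ exact while replacing $\mathcal{Q}$ by $C(\alpha)\eta^2$ in the denominator; that approximation drops a correction of the same order $\eta^{-2-\frac{1}{\lambda}}$ coming from the expansion of $\mathcal{Q}^{-1-\frac{1}{2\lambda}}$, and it also omits the subleading term $\mathcal{N}\eta^{-\frac{1}{\lambda}-1}$ of $\bar{\mathcal{P}}_0$ that enters through the factor $1/\bar{\mathcal{P}}_0$. I checked both of your displayed expansions; they are correct, the braces in (\ref{eq:ux}) reduce to $\lambda\eta^{-2}\bigl[\mathcal{N}/\mathcal{M}-u_0'(\alpha)/C(\alpha)\bigr]+O(\eta^{-3})$, and the resulting limit is $\mathcal{N}\mathcal{M}^{-1-2\lambda}-\tfrac{u_0'(\alpha)}{\rho_0(\alpha)}P_\infty(\alpha)$ --- which is \emph{not} the $U_\infty$ of (\ref{eq:steady}): the sign of the $\mathcal{N}$ term is opposite. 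Your own sanity check adjudicates this: with $\gamma_\alpha^\infty=C^{-\frac{1}{2\lambda}}/\mathcal{M}$ one finds $\int_0^1 U_\infty\gamma_\alpha^\infty\,d\alpha=0$ for your limit but $-2\mathcal{N}\mathcal{M}^{-1-2\lambda}$ for the formula (\ref{eq:steady})i), so periodicity of $u$ forces the plus sign; the discrepancy is invisible in the paper's examples only because $\mathcal{N}=0$ there by symmetry. You should therefore not assert that your bracket ``is identified with'' (\ref{eq:steady})i) as printed, but rather record the corrected steady state. One last caveat: your localization remark near zeros of $C$ is moot in the first case (the paper shows $\Omega=\emptyset$ when $\rho_0$ never vanishes, so $C$ is bounded below for continuous data), while in the second case, where $C$ can vanish at points with $u_0'(\alpha_i)=\rho_0(\alpha_i)=0$, the finiteness of $\mathcal{M}$ and $\mathcal{N}$ becomes a genuine integrability condition on the data that neither your argument nor the paper's addresses.
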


\begin{proof}

Suppose $\lambda$ and $\kappa$ are such that $\lambda\kappa<0$ and $\rho_0(\alpha)$ is never zero on $[0,1]$. The latter implies that, particularly, $\rho_0(\alpha)\neq0$ on $\Sigma$. This and $\lambda\kappa<0$ imply that (\ref{eq:D}) is negative and, thus, $0<\mathcal{Q}<+\infty$ on $\Sigma$ for all $0\leq\eta<+\infty$. Moreover $\Sigma=[0,1]$. Indeed, suppose $\Omega\neq\emptyset$, namely, that there are $\alpha\in[0,1]$ such that $C(\alpha)=0$. From (\ref{eq:c0}) this implies that
\begin{equation}
\label{eq:1}
\begin{split}
0\leq u_0'(\alpha)^2=\frac{\kappa}{\lambda}\rho_0(\alpha)^2<0,
\end{split}
\end{equation}
a contradiction. Therefore, $0<\mathcal{Q}<+\infty$ for all $\alpha\in[0,1]$ and $0\leq\eta(t)<+\infty$, but $\mathcal{Q}\to+\infty$ as $\eta\to+\infty$. Next, define real numbers $\mathcal{M}>0$ and $\mathcal{N}$ as in (\ref{eq:steady2}) and note that both are well-defined because $\lambda\kappa<0$ and $\rho_0\neq0$ imply that $C(\alpha)>0$. Then (\ref{eq:Q}) yields, for large enough $\eta>0$, the simple asymptotic estimates
\begin{equation}
\label{eq:large1}
\begin{split}
\mathcal{\bar{P}}_0\sim\mathcal{M}\,\eta^{-\frac{1}{\lambda}},\,\,\,\,\,\,\,\,\,\,\,\,\,\,\,\,\,\,\,\,\,\,\,\,\int_0^1{\frac{\lambda u_0'(\alpha)-\eta C(\alpha)}{\mathcal{Q}(\alpha,t)^{1+\frac{1}{2\lambda}}}}\sim\frac{\lambda\,\mathcal{N}-\mathcal{M}\,\eta}{\eta^{2+\frac{1}{\lambda}}}.
\end{split}
\end{equation}
Using (\ref{eq:large1})i) on (\ref{eq:rho}) we find that 
$$\rho(\gamma(\alpha,t),t)\sim\frac{\rho_0(\alpha)}{\mathcal{Q}(\alpha,t)}\left(\frac{\mathcal{M}}{\eta^{\frac{1}{\lambda}}}\right)^{-2\lambda}=\frac{\rho_0}{\mathcal{M}^{2\lambda}}\left(\frac{\eta^2}{C(\alpha)\eta^2-2\lambda u_0'\eta+1}\right).$$
Then, if $P_{\infty}(\alpha)$ denotes the limit as $\eta\to+\infty$ of the right-hand-side above, we get (\ref{eq:steady})ii). In a similar fashion, using (\ref{eq:large1}) on (\ref{eq:ux}) yields (\ref{eq:steady})i). Finally, since (\ref{eq:large1})i) implies that
\begin{equation}
\label{eq:finitet}
\lim_{\eta\to+\infty}\mathcal{\bar{P}}_0=
\begin{cases}
0,\,\,\,\,\,\,\,\,\,\,\,\,&\lambda>0,
\\
+\infty,\,\,\,\,\,\,\,\,\,\,\,&\lambda<0,
\end{cases}
\end{equation}
(\ref{eq:etaivp}) gives
$$\lim_{\eta\to+\infty}\frac{dt}{d\eta}=0,$$
that is, as $\eta\to+\infty$, $t$ ceases to be an increasing function of $\eta$ and converges to a finite value, which we denote by $t_{\infty}$. This establishes the first part of the Theorem. 

For the last part, denote by $\alpha_i\in[0,1]$, $1\leq i\leq n$, the points where $\rho_0$ vanishes. Moreover, assume there are finitely many of these points and suppose $\lambda u_0'(\alpha_i)\leq0$. Clearly, if $\alpha\notin\{\alpha_i\}$, the discriminant $\mathcal{D}$ in (\ref{eq:D}) is negative and $0<\mathcal{Q}<+\infty$ for such values of $\alpha$ and $0\leq\eta<+\infty$. Now, if $\alpha\in\{\alpha_i\}$ and $\lambda u_0'(\alpha_i)<0$, then $\rho_0(\alpha_i)=0$ so that $\mathcal{D}(\alpha_i)=0$ and
\begin{equation}
\label{eq:last1}
\mathcal{Q}(\alpha_i,t)=\left(\lambda u_0'(\alpha_i)\right)^2(\mathcal{H}-\eta(t))^2,\,\,\,\,\,\,\,\,\,\,\,\,\,\,\,\,\,\,\,\mathcal{H}=\frac{1}{\lambda u_0'(\alpha_i)}<0,
\end{equation}
which, once again, implies that $0<\mathcal{Q}<+\infty$ since $\eta\geq0$. Similarly for $u_0'(\alpha_i)=0$, in which case $\mathcal{Q}(\alpha_i,t)\equiv1$. At this point, we may now follow the argument used to prove the first part of the Theorem. This establishes our result.\end{proof}

\subsubsection{Blow-up Solutions for $\lambda\kappa<0$}\hfill
\label{subsubsec:blow1ak<0}

From Theorem \ref{thm:global1ak<0} above, note that we still have to consider the case $\rho_0(\alpha_i)=0$ and $\lambda u_0'(\alpha_i)>0$. As evidenced by the proof of the previous Theorem, the main issue with parameters $\lambda\kappa<0$ is the possibility of a vanishing discriminant (\ref{eq:D}), which in turn would lead to one real-valued, double root of $\mathcal{Q}$ and, possibly, divergent space-dependent terms and time-dependent integrals in (\ref{eq:ux}) and (\ref{eq:rho}). Below we show that, in this last case, there exist smooth initial data for which $u_x$ diverges in finite time. In contrast, the second component solution, $\rho$, will either persist globally in time, or, at least, up to the blow-up time for $u_x$. 

\begin{theorem}
\label{thm:blow1ak<0}
Consider the initial boundary value problem (\ref{eq:hs})-(\ref{eq:pbc}) for parameters $\lambda\kappa<0$. Suppose there are $\alpha_i\in[0,1]$, $1\leq i\leq n$, such that $\rho_0(\alpha_i)=0$ and $\lambda u_0'(\alpha_i)>0$. Then for $(\lambda,\kappa)\in\mathbb{R}^-\times\mathbb{R}^+$, or $(\lambda,\kappa)\in(1,+\infty)\times\mathbb{R}^-$, there exists a finite $t_*>0$ for which $u_x$ diverges as $t\uparrow t_*$ while $\rho$ remains bounded for $0\leq t\leq t_*$. In contrast, if $(\lambda,\kappa)\in(0,1]\times\mathbb{R}^-$, both solution components exist globally in time. More particularly, let the restriction of $\lambda u_0'$ to $\{\alpha_i\}$ attain its greatest value at $\alpha_1$. Then there exist smooth initial data such that
\begin{enumerate}
\item\label{it:1ak<0} For $(\lambda,\kappa)\in(-2,0)\times\mathbb{R}^+$, a ``one-sided'' singularity in $u_x$ occurs, that is, $u_x(\gamma(\alpha_1,t),t)$ diverges to minus infinity as $t\uparrow t_*$ but remains finite otherwise.

\item\label{it:2ak<0} For $(\lambda,\kappa)\in(-\infty,-2]\times\mathbb{R}^+$, $u_x$ undergoes ``two-sided, everywhere'' blow-up, namely, $u_x(\gamma(\alpha_1,t),t)\to-\infty$ as $t\uparrow t_*$ and diverges to plus infinity otherwise. 

\item\label{it:3ak<0} For $(\lambda,\kappa)\in(1,+\infty)\times\mathbb{R}^-$, $u_x(\gamma(\alpha_1,t),t)\to+\infty$ as $t\uparrow t_*$, while, for $\alpha\neq\alpha_1$, $u_x(\gamma(\alpha,t),t)$ blows up to negative infinity. 

\item\label{it:4ak<0} For $(\lambda,\kappa)\in(0,1]\times\mathbb{R}^-$, both $u_x$ and $\rho$ persist globally in time. In fact, for $(\lambda,\kappa)\in(0,1)\times\mathbb{R}^-$, $u_x$ vanishes as $t\to+\infty$ but approaches a non-trivial steady-state when $(\lambda,\kappa)\in\{1\}\times\mathbb{R}^-$. 
\vspace{0.05in}

\end{enumerate}

Properties of the global-in-time behaviour of $\rho\circ\gamma$ are given below.

\end{theorem}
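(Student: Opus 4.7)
The overall strategy is to verify that we are in the ``double-root'' setting described after equation (\ref{eq:fac2}), reduce the time evolution of $u_x$ to the giPJ-type asymptotics encoded in (\ref{eq:uxdouble})--(\ref{eq:otherdouble}), and then read off the four claimed regimes from exponent and sign considerations.

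First I would locate the earliest zero of $\mathcal{Q}$. For $\alpha\notin\{\alpha_i\}_{i=1}^{n}$ one has $\rho_0(\alpha)\neq 0$, so the discriminant $\mathcal{D}(\alpha)=4\lambda\kappa\rho_0(\alpha)^2<0$ and $\mathcal{Q}(\alpha,t)>0$ for every $\eta\geq 0$. At each $\alpha_i$, $\rho_0(\alpha_i)=0$ forces $\mathcal{Q}(\alpha_i,t)=(1-\lambda\eta\, u_0'(\alpha_i))^2$, which under the hypothesis $\lambda u_0'(\alpha_i)>0$ vanishes at $\eta=1/(\lambda u_0'(\alpha_i))$. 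Since the restriction of $\lambda u_0'$ to $\{\alpha_i\}$ attains its maximum at $\alpha_1$, the earliest vanishing of $\mathcal{Q}$ occurs at $\overline\alpha=\alpha_1$ with double multiplicity and at the finite value $\eta_{*}=1/(\lambda u_0'(\alpha_1))$, and the finiteness of $t_{*}$ then follows from (\ref{eq:t*}).

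For smooth initial data I would arrange $\alpha_1$ to be an interior, nondegenerate extremum of $\lambda u_0'$, so that $\lambda u_0'(\alpha)=\lambda u_0'(\alpha_1)-a(\alpha-\alpha_1)^2+o((\alpha-\alpha_1)^2)$ for some $a>0$. Writing $\epsilon:=1-\lambda\eta\, u_0'(\alpha_1)\downarrow 0$ and performing the Laplace-type change of variables $\alpha-\alpha_1=(\epsilon/(a\eta))^{1/2}s$, the integrals (\ref{eq:podouble}) and (\ref{eq:otherdouble}) reduce, modulo positive constants depending on the data, to
\begin{equation*}
\epsilon^{\frac{1}{2}-\frac{1}{\lambda}}\int_{-R(\epsilon)}^{R(\epsilon)}(1+s^{2})^{-\frac{1}{\lambda}}\,ds\quad\text{and}\quad \epsilon^{-\frac{1}{2}-\frac{1}{\lambda}}\int_{-R(\epsilon)}^{R(\epsilon)}(1+s^{2})^{-1-\frac{1}{\lambda}}\,ds.
\end{equation*}
The convergence of the $s$-integrals as $R\to\infty$ is dictated by $\lambda$, and the resulting $\epsilon$-exponents coincide with those already tabulated in the giPJ analysis of \cite{Sarria1, Sarria2}, which I invoke wholesale to extract the asymptotic rates of $\bar{\mathcal{P}}_0$ and of (\ref{eq:otherdouble}) as $\eta\uparrow\eta_{*}$.

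Substituting these rates into (\ref{eq:uxdouble}) produces the four regimes. The pointwise summand $\lambda u_0'(\overline\alpha)/(1-\lambda\eta\, u_0'(\overline\alpha))$ diverges to $+\infty$ only at $\overline\alpha=\alpha_1$ and remains bounded otherwise, while the mean-correction term carries a prefactor $\bar{\mathcal{P}}_0^{-2\lambda}$ whose growth depends on the sign and magnitude of $\lambda$. For $(\lambda,\kappa)\in(-2,0)\times\mathbb{R}^{+}$ the mean correction vanishes off $\alpha_1$ and adds constructively at $\alpha_1$ with sign opposite to the pointwise term, giving the one-sided $u_x\to-\infty$ of part (\ref{it:1ak<0}); for $\lambda\leq-2$ the mean correction itself blows up and dominates away from $\alpha_1$, producing the two-sided everywhere behavior of part (\ref{it:2ak<0}); for $(\lambda,\kappa)\in(1,+\infty)\times\mathbb{R}^{-}$ the same accounting with reversed signs yields part (\ref{it:3ak<0}); and for $(\lambda,\kappa)\in(0,1]\times\mathbb{R}^{-}$ the integrability exponent $1/\lambda\geq 1$ forces the mean correction to exactly cancel the pointwise singularity at each $\alpha$, keeping $u_x$ globally bounded, with decay $u_x\to 0$ whenever $\bar{\mathcal{P}}_0^{-2\lambda}\to 0$ ($\lambda<1$) and a nontrivial steady profile at the balance point $\lambda=1$. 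Boundedness of $\rho\circ\gamma$ on $[0,t_{*}]$ follows directly from (\ref{eq:rho}): $\rho_0(\alpha_1)=0$ kills the numerator exactly at the singular locus, while off $\alpha_1$ both $\mathcal{Q}$ and $\bar{\mathcal{P}}_0^{-2\lambda}$ stay controlled, and a second-order Taylor expansion of $\rho_0$ at $\alpha_1$ rules out hidden blow-up. The hardest part of the argument will be the borderline exponent matching at the threshold values $\lambda=-2$ and $\lambda=1$, where the Laplace expansions produce logarithmic corrections and the leading balance in (\ref{eq:uxdouble}) must be tracked carefully to justify the open versus closed endpoints in parts (\ref{it:1ak<0})--(\ref{it:4ak<0}).
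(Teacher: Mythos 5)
Your overall strategy -- locate the earliest zero of $\mathcal{Q}$ at the double root $\eta_*=1/(\lambda u_0'(\alpha_1))$, Taylor-expand $u_0'$ about $\alpha_1$, extract power-law rates for $\bar{\mathcal{P}}_0$ and (\ref{eq:otherdouble}) by a Laplace-type rescaling, and feed these into (\ref{eq:uxdouble}) -- is exactly the route the paper takes, and your exponent bookkeeping for the $s$-integrals (convergence thresholds at $\lambda=2$ and $\lambda=-2$) is correct. However, there are two concrete problems. First, your early assertion that ``the finiteness of $t_*$ then follows from (\ref{eq:t*})'' is false in the regime $(\lambda,\kappa)\in(0,1]\times\mathbb{R}^-$: there $\eta_*$ is finite but $\bar{\mathcal{P}}_0^{2\lambda}\sim(\eta_*-\eta)^{\lambda-2}$ is \emph{not} integrable near $\eta_*$, so $t_*=+\infty$. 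This integrability check, $t_*-t\sim C\int_\eta^{\eta_*}(\eta_*-\mu)^{\lambda-2}\,d\mu$, is precisely how the paper separates the global case $(0,1]$ from the blow-up case $(1,2)$, and your proposal never performs it; without it part (\ref{it:4ak<0}) (global existence in \emph{time}) is not established, only boundedness of $u_x$ as $\eta\uparrow\eta_*$.

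Second, your mechanism for boundedness when $\lambda\in(0,1]$ -- that ``the mean correction exactly cancels the pointwise singularity at each $\alpha$'' -- is not what happens. Both terms inside the braces of (\ref{eq:uxdouble}) diverge at the same rate $(\eta_*-\eta)^{-1}$ at $\alpha_1$, but their combination carries the nonzero net coefficient proportional to $3\lambda-2$ (obtained from the Gamma-function identity $C_5/C_4=\tfrac{1}{\lambda}-\tfrac12$); there is no cancellation. The decay $u_x\to0$ for $\lambda\in(0,1)$ comes instead from the prefactor $\bar{\mathcal{P}}_0^{-2\lambda}\sim(\eta_*-\eta)^{2-\lambda}$ overwhelming the $(\eta_*-\eta)^{-1}$ growth, yielding the net rate $(\eta_*-\eta)^{1-\lambda}$. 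The same constant-tracking is what you gloss over in part (\ref{it:3ak<0}) for $\lambda\in(1,2)$: since the two divergent contributions compete at the identical rate, you must compute the sign of $3\lambda-2$ to conclude that $u_x(\gamma(\alpha_1,t),t)\to+\infty$ rather than cancelling or diverging the other way. ``The same accounting with reversed signs'' does not substitute for this computation. The remaining pieces (the $\rho$ analysis, the $\lambda\le-2$ two-sided blow-up, the treatment of $(-2,-1/2)$ via convergence of (\ref{eq:integrals})ii)) are consistent with the paper.
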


\begin{proof}
For $\lambda\kappa<0$, suppose there are $\alpha_i=\{\alpha_1,...,\alpha_n\}\subset[0,1]$ where $\rho_0(\alpha_i)=0$ and $\lambda u_0'(\alpha_i)>0$. Notice that $\{\alpha_i\}\subseteq\Sigma$. Indeed, if $\alpha_i\in\Omega$ for some $i=1,...,n$, then $C(\alpha_i)=0$ implies that $u_0'(\alpha_i)=0$, a contradiction since $\lambda u_0'(\alpha_i)>0$ and $\lambda\neq0$. The reader may check, by following an argument similar to that used in Theorem \ref{thm:global1ak<0}, that 
\begin{equation}
\label{eq:qpos}
\begin{split}
0<\mathcal{Q}<+\infty
\end{split}
\end{equation}
for all $\alpha\notin\{\alpha_i\}$ and $0\leq\eta<+\infty$. This implies the boundedness of the space-dependent terms in (\ref{eq:ux}) and (\ref{eq:rho}) for $\alpha\notin\{\alpha_i\}$. Now, since $\mathcal{D}(\alpha_i)=0$ and $\lambda u_0'(\alpha_i)>0$, 
$$\mathcal{Q}(\alpha_i,t)=\left(\lambda u_0'(\alpha_i)\right)^2(\mathcal{H}-\eta(t))^2,\,\,\,\,\,\,\,\,\,\,\,\,\,\,\,\,\,\,\,\,\,\mathcal{H}=\frac{1}{\lambda u_0'(\alpha_i)}>0.$$ 
Let
\begin{equation}
\label{eq:eta*1}
\begin{split}
\eta_*=\frac{1}{c_0}>0
\end{split}
\end{equation}
where, without loss of generality, we have set
$$c_0\equiv\max_{\alpha\in\{\alpha_i\}}\left\{\lambda u_0'(\alpha)\right\}=\lambda u_0'(\alpha_1).$$
Notice that $c_0>0$ by periodicity of $u_0$. Since for $\alpha\in\{\alpha_i\}$, (\ref{eq:ux}) may be written as
\begin{equation}
\label{eq:uxak<0}
\begin{split}
u_x(\gamma(\alpha_i,t),t)=\frac{\mathcal{\bar P}_0(t)^{^{-2\lambda}}}{\lambda}\left\{\frac{1}{\mathcal{H}-\eta(t)}-\frac{1}{\mathcal{\bar P}_0(t)}\int_0^1{\frac{\lambda u_0'(\alpha)-\eta(t)C(\alpha)}{\mathcal{Q}(\alpha,t)^{1+\frac{1}{2\lambda}}}d\alpha}\right\},
\end{split}
\end{equation}
we see that its space-dependent term will diverge earliest when $\alpha=\alpha_1$ as $\eta\uparrow\eta_*$.
However, this does not necessarily imply blow-up of $u_x(\gamma(\alpha_1,t),t)$; we still have to determine the behaviour of 
\begin{equation}
\label{eq:integrals}
\begin{split}
\mathcal{\bar P}_0(t)=\int_0^1{\frac{d\alpha}{\mathcal{Q}(\alpha,t)^{\frac{1}{2\lambda}}}},\,\,\,\,\,\,\,\,\,\,\,\,\,\,\,\,\,\,\,\int_0^1{\frac{\lambda u_0'(\alpha)-\eta(t)C(\alpha)}{\mathcal{Q}(\alpha,t)^{1+\frac{1}{2\lambda}}}d\alpha}
\end{split}
\end{equation}
as $\eta\uparrow\eta_*$. Consider first the simple case where $\lambda\kappa<0$ for $(\lambda,\kappa)\in[-1/2,0)\times\mathbb{R}^+$, which implies that $\frac{1}{2\lambda}<0$ and $1+\frac{1}{2\lambda}\leq0$. Then for smooth enough initial data, both integral terms remain positive and finite for all $\eta\in[0,\eta_*]$. Indeed, if $\alpha\notin\{\alpha_i\}$, we have (\ref{eq:qpos}), whereas,
for $\alpha\in\{\alpha_i\}$, suppose there is $t_{\epsilon}>0$ and $\epsilon>0$ small such that $\eta_{\epsilon}\equiv\eta(t_{\epsilon})=\frac{1}{c_0+\epsilon}$. Then $0<\eta_{\epsilon}<\eta_*$ and
$$\mathcal{Q}(\alpha_i,t_{\epsilon})=\left(\lambda u_0'(\alpha_i)\right)^2\left(\frac{c_0-\lambda u_0'(\alpha)+\epsilon}{\lambda u_0'(\alpha_i)(c_0+\epsilon)}\right)^2>0$$
for all $\epsilon>0$. Also $\mathcal{Q}(\alpha_i,t_{\epsilon})\to0^+$, first, when $\alpha=\alpha_1$ as $\epsilon\to0$, that is, as $\eta\uparrow\eta_*$. Thus $\mathcal{Q}(\alpha_i,t)>0$ for all $0\leq\eta\leq\eta_*$. This, together with (\ref{eq:qpos}), 
implies that 
\begin{equation}
\label{eq:boundedk0ak<0}
\begin{split}
0<\mathcal{\bar P}_0(t)<+\infty
\end{split}
\end{equation}
for $0\leq\eta\leq\eta_*$ and $\lambda<0$. Letting $i=1$ in (\ref{eq:uxak<0}), we conclude that
\begin{equation}
\label{eq:blow1ak<0}
\begin{split}
u_x(\gamma(\alpha_1,t),t)\sim\frac{C}{\lambda}\left(\frac{1}{\eta_*-\eta}\right)\to-\infty
\end{split}
\end{equation}
as $\eta\uparrow\eta_*$ for $(\lambda,\kappa)\in[-1/2,0)\times\mathbb{R}^+$. If instead $\alpha\neq\alpha_1$, then $u_x\circ\gamma$ stays bounded for $\eta\in[0,\eta_*]$. The existence of a finite blow-up time $t_*>0$ such that (\ref{eq:etatoeta*}) holds follows from (\ref{eq:t*}) and (\ref{eq:boundedk0ak<0}). Moreover, since $\rho_0(\alpha_i)=0$, (\ref{eq:rho}) implies that
$$\rho(\gamma(\alpha_i,t),t)\equiv0,$$
whereas, for $\alpha\notin\{\alpha_i\}$, $\rho$ stays bounded for all $t\in\mathbb{R}^+$ due to (\ref{eq:qpos}) and (\ref{eq:boundedk0ak<0}). This establishes part (\ref{it:1ak<0}) of the Theorem for $(\lambda,\kappa)\in[-1/2,0)\times\mathbb{R}^+$. Next suppose $\lambda\kappa<0$ for $(\lambda,\kappa)\in(-\infty,-1/2)\times\mathbb{R}^+$. Then (\ref{eq:boundedk0ak<0}) holds while the behaviour of (\ref{eq:integrals})ii), as $\eta\uparrow\eta_*$, requires further study. But recall that $\eta_*$ in (\ref{eq:eta*1}) is a double root of $\mathcal{Q}$ corresponding to  $\alpha_1\in[0,1]$. Consequently, for $\alpha=\alpha_1$, the space-dependent term in (\ref{eq:ux}) will diverge earliest in the same fashion as (\ref{eq:spacedouble}), with $\overline\alpha=\alpha_1$. Furthermore, and also with $\overline\alpha$ replaced by $\alpha_1$, the integral terms in (\ref{eq:integrals}) will behave as in (\ref{eq:podouble}) and (\ref{eq:otherdouble}) for $\eta_*-\eta>0$ small. Consequently, the evolution of (\ref{eq:uxak<0}) as $\eta\uparrow\eta_*$ follows that of (\ref{eq:uxdouble}), for which, as remarked in \S\ref{subsec:notation}, estimates are readily available in \cite{Sarria1} and \cite{Sarria2}. However, and for convenience of the reader, below we give a brief outline of how to obtain these estimates for certain values of $\lambda$ and a particular class of smooth data. The reader may refer to \cite{Sarria2} for an argument suitable to larger classes of smooth and non-smooth data. First, since $\lambda<-1/2$ and $\lambda u_0'(\alpha_i)>0$, denote by $m_0<0$ the least value of $u_0'$ in $\{\alpha_i\}$, namely, $u_0'(\alpha_1)=m_0$. Then (\ref{eq:eta*1}) becomes
$$\eta_*=\frac{1}{\lambda m_0}.$$
Additionally, assume that $u_0''(\alpha_1)=0$ and $u_0'''(\alpha_1)\neq0$ so that, a Taylor expansion about $\alpha_1$, yields
$$u_0'(\alpha)\sim m_0+C_1(\alpha-\alpha_1)^2,\,\,\,\,\,\,\,\,\,\,\,\,\,\,\,C_1=\frac{u_0'''(\alpha_1)}{2}>0$$
for $0\leq\left|\alpha-\alpha_1\right|\leq r$ and arbitrarily small $r>0$. Then, for $\epsilon>0$ small,
\begin{equation}
\label{eq:long1}
\begin{split}
\int_{\alpha_1-r}^{\alpha_1+r}{\frac{(\epsilon+u_0'(\alpha)-m_0)d\alpha}{(\epsilon+u_0'(\alpha)-m_0)^{2\left(1+\frac{1}{2\lambda}\right)}}}&\sim\int_{\alpha_1-r}^{\alpha_1+r}{\frac{d\alpha}{(\epsilon+C_1(\alpha-\alpha_1)^2)^{1+\frac{1}{\lambda}}}}
\\
&=\frac{1}{\epsilon^{1+\frac{1}{\lambda}}}\int_{\alpha_1-r}^{\alpha_1+r}{\left[1+\left(\sqrt{\frac{C_1}{\epsilon}}(\alpha-\alpha_1)\right)^2\right]^{^{-1-\frac{1}{\lambda}}}d\alpha}
\\
&\sim\frac{2\epsilon^{-\frac{1}{2}-\frac{1}{\lambda}}}{\sqrt{C_1}}\int_0^{\frac{\pi}{2}}{\left(\cos\theta\right)^{\frac{2}{\lambda}}d\theta}
\end{split}
\end{equation}
where the last integral is obtained via the change of variables $\sqrt{\frac{C_1}{\epsilon}}(\alpha-\alpha_1)=\tan\theta$. Now recall the well-known properties of the gamma function (see for instance \cite{Magnus1}, \cite{Gamelin1}),
\begin{equation}
\label{eq:gamma}
\begin{split}
\int_0^1{t^{p-1}(1-t)^{s-1}dt}=\frac{\Gamma(p)\Gamma(s)}{\Gamma(p+s)},\,\,\,\,\,\,\,\,\,\,\,\,\,\,\,\,\,\,\,\Gamma(1+y)=y\Gamma(y)
\end{split}
\end{equation}
for $p,s,y>0$. Suppose $\lambda\in(-\infty,-2)$ and let $t=(\sin\theta)^2$, $p=\frac{1}{2}$, and $s=\frac{1}{\lambda}+\frac{1}{2}$ in (\ref{eq:gamma})i), to obtain
$$2\int_0^{\frac{\pi}{2}}{\left(\cos\theta\right)^{\frac{2}{\lambda}}d\theta}=\frac{\sqrt{\pi}\,\Gamma\left(\frac{1}{\lambda}+\frac{1}{2}\right)}{\Gamma\left(1+\frac{1}{\lambda}\right)}=C_2\in\mathbb{R}^+.$$
Consequently, for $\epsilon>0$ small,
\begin{equation}
\label{eq:est2}
\begin{split}
\int_{\alpha_1-r}^{\alpha_1+r}{\frac{\epsilon+u_0'(\alpha)-m_0}{(\epsilon+u_0'(\alpha)-m_0)^{2\left(1+\frac{1}{2\lambda}\right)}}d\alpha}\sim\frac{C_2}{\sqrt{C_1}}\epsilon^{-\frac{1}{2}-\frac{1}{\lambda}}.
\end{split}
\end{equation}
But since 
$$\lambda u_0'(\alpha)-\eta(t)C(\alpha)\sim\lambda u_0'(\alpha)(1-\lambda\eta(t)u_0'(\alpha))$$
for $\alpha$ arbitrarily close to $\alpha_1$,\footnote[4]{See Remark \ref{rem:special1}.} we let $\epsilon=m_0-\frac{1}{\lambda\eta}$ into (\ref{eq:est2}) to find, for $\eta_*-\eta>0$ small and $\lambda<-2$, 
\begin{equation}
\label{eq:estk11}
\begin{split}
\int_0^1{\frac{\lambda u_0'(\alpha)-\eta(t)C(\alpha)}{\mathcal{Q}(\alpha,t)^{1+\frac{1}{2\lambda}}}d\alpha}\sim\frac{C_2}{(\eta_*-\eta(t))^{\frac{1}{2}+\frac{1}{\lambda}}}.
\end{split}
\end{equation}
Setting $i=1$ in (\ref{eq:uxak<0}) and using the above, we see that the space-dependent term dominates,
\begin{equation}
\label{eq:onesided1}
\begin{split}
u_x(\gamma(\alpha_1,t),t)\sim-\frac{C}{\eta_*-\eta}\to-\infty
\end{split}
\end{equation}
as $\eta\uparrow\eta_*$ for $(\lambda,\kappa)\in(-\infty,-2)\times\mathbb{R}^+$.\footnote[5]{Throughout the paper, $C$ will denote a generic positive constant. Also note that, to avoid confusion in notation between $C$ and the function $C(\alpha)$ in (\ref{eq:c0}), we will always emphasize the $\alpha$-dependence in the latter.} In contrast, for $\alpha\neq\alpha_1$, (\ref{eq:ux}) for $\alpha\not\in\{\alpha_i\}$, or (\ref{eq:uxak<0}) if $\alpha\in\{\alpha_i\}$, imply that (\ref{eq:estk11}) now dominates and
\begin{equation}
\label{eq:onesided2}
\begin{split}
u_x(\gamma(\alpha,t),t)\sim\frac{C}{(\eta_*-\eta)^{\frac{1}{2}+\frac{1}{\lambda}}}\to+\infty.
\end{split}
\end{equation}
The existence of a finite blow-up time $t_*>0$ follows from (\ref{eq:t*}) and (\ref{eq:boundedk0ak<0}). Lastly, for $(\lambda,\kappa)\in(-2,-1/2)\times\mathbb{R}^+$, we follow the argument outlined in Appendix B, also used in the proof of Theorem 4.36 in \cite{Sarria1}, to show that (\ref{eq:integrals})ii) remains finite as $\eta\uparrow\eta_*$. Consequently, for $\alpha=\alpha_1$ and $(\lambda,\kappa)\in(-2,-1/2)\times\mathbb{R}^+$, (\ref{eq:uxak<0}) diverges according to (\ref{eq:blow1ak<0}) but remains finite otherwise. 

Now, from the argument in the case $\lambda\in[-1/2,0)$, $\rho(\gamma(\alpha_i,t),t)\equiv0$, while, for $\alpha\notin\{\alpha_i\}$, $\rho$ stays finite for $t\in\mathbb{R}^+$ due to (\ref{eq:qpos}) and the fact that (\ref{eq:estk11}) is absent in its representation formula (\ref{eq:rho}). This establishes parts (\ref{it:1ak<0}) and (\ref{it:2ak<0}). Lastly, suppose $\lambda\kappa<0$ for $(\lambda,\kappa)\in\mathbb{R}^+\times\mathbb{R}^-$. Then estimates on both terms in (\ref{eq:integrals}), as $\eta\uparrow\eta_*$, are needed. Now, because $\lambda>0$ and $\lambda u_0'(\alpha_i)>0$, (\ref{eq:eta*1}) becomes
$$\eta_*=\frac{1}{\lambda M_0}$$
where $M_0>0$ denotes the greatest value attained by $u_0'$ on $\{\alpha_i\}$, i.e. $u_0'(\alpha_1)=M_0$. As in the previous case, we estimate the integral terms for $u_0'$ satisfying $u_0''(\alpha_1)=0$ and $u_0'''(\alpha_1)\neq0$. Following an argument analogous to the one leading to estimate (\ref{eq:estk11}), we find that, for $\eta_*-\eta>0$ small,
\begin{equation}
\label{eq:estk01}
\mathcal{\bar{P}}_0(t)\sim
\begin{cases}
C_4(\eta_*-\eta(t))^{\frac{1}{2}-\frac{1}{\lambda}},\,\,\,\,\,\,\,\,\,\,\,&\lambda\in(0,2),
\\
-C\ln(\eta_*-\eta(t)),\,\,\,\,\,\,\,\,\,\,\,\,\,&\lambda=2,
\\
C,\,\,\,\,\,\,&\lambda\in(2,+\infty),
\end{cases}
\end{equation}
while\footnote[6]{Estimate (\ref{eq:estk01})iii) is obtained by following the argument outlined in Appendix \ref{sec:furtherintegrals}.}
\begin{equation}
\label{eq:estk12}
\int_0^1{\frac{\lambda u_0'(\alpha)-\eta(t)C(\alpha)}{\mathcal{Q}(\alpha,t)^{1+\frac{1}{2\lambda}}}}\sim C_5(\eta_*-\eta(t))^{-\left(\frac{1}{2}+\frac{1}{\lambda}\right)},\,\,\,\,\,\,\,\,\,\,\,\,\,\,\,\,\lambda\in\mathbb{R}^+.
\end{equation}
The generic constants $C\in\mathbb{R}^+$ in (\ref{eq:estk01})ii), iii) depend only on $\lambda$, and the positive constants $C_4$ and $C_5$ are given by
$$C_4=\frac{\Gamma\left(\frac{1}{\lambda}-\frac{1}{2}\right)}{\Gamma\left(\frac{1}{\lambda}\right)}\sqrt{\frac{\pi M_0}{\left|C_3\right|}}(\lambda M_0)^{\frac{1}{2}-\frac{1}{\lambda}},\,\,\,\,\,\,\,\,\,\,\,\,\,\,\,\,\,\,\,\,\,\,\lambda\in(0,2),$$
and
$$C_5=\frac{\Gamma\left(\frac{1}{\lambda}+\frac{1}{2}\right)}{\lambda\,\Gamma\left(1+\frac{1}{\lambda}\right)}\sqrt{\frac{\pi M_0}{\left|C_3\right|}}(\lambda M_0)^{\frac{1}{2}-\frac{1}{\lambda}},\,\,\,\,\,\,\,\,\,\,\,\,\,\,\,\lambda\in\mathbb{R}^+.$$
For future reference, we note that (\ref{eq:gamma})ii) implies that
\begin{equation}
\label{eq:c4c5}
\frac{C_5}{C_4}=\frac{1}{\lambda}-\frac{1}{2}>0,\,\,\,\,\,\,\,\,\,\,\,\,\,\,\,\,\,\,\,\,\,\,\,\,\,\,\,\,\,\,\,\,\lambda\in(0,2).
\end{equation}
First let $\lambda\in(0,2)$ and $i=1$ in (\ref{eq:uxak<0}). Then using the corresponding estimates we find that
\begin{equation}
\label{eq:estux1}
u_x(\gamma(\alpha_1,t),t)\sim\frac{(3\lambda-2)C}{(\eta_*-\eta(t))^{\lambda-1}}\to
\begin{cases}
0,\,\,\,\,\,\,\,\,\,\,\,\,\,\,\,&\lambda\in(0,1),
\\
C,\,\,\,\,\,\,&\lambda=1,
\\
+\infty,\,\,\,\,\,\,\,&\lambda\in(1,2)
\end{cases}
\end{equation}
as $\eta\uparrow\eta_*$. If instead $\alpha\neq\alpha_1$, then (\ref{eq:ux}) and (\ref{eq:uxak<0}) give
\begin{equation}
\label{eq:estux2}
u_x(\gamma(\alpha,t),t)\sim-\frac{C}{(\eta_*-\eta(t))^{\lambda-1}}\to
\begin{cases}
0,\,\,\,\,\,\,\,\,\,\,\,\,\,\,\,&\lambda\in(0,1),
\\
C,\,\,\,\,\,\,&\lambda=1,
\\
-\infty,\,\,\,\,\,\,\,&\lambda\in(1,2).
\end{cases}
\end{equation}
For the second component $\rho$ in (\ref{eq:rho}), if $\alpha\in\{\alpha_i\}$ then $\rho\equiv0$ due to $\rho_0(\alpha_i)=0$. Similarly for $\alpha\notin\{\alpha_i\}$ such that $\rho_0(\alpha)=0$. Finally, when $\alpha\notin\{\alpha_i\}$ and $\rho_0(\alpha)\neq0$, (\ref{eq:D}) gives $\mathcal{D}<0$ and so (\ref{eq:qpos}) holds. Consequently, (\ref{eq:rho}) and (\ref{eq:estk01})i) yield
$$\rho(\gamma(\alpha,t),t)\sim C\rho_0(\alpha)(\eta_*-\eta)^{2-\lambda}\to0$$
as $\eta\uparrow\eta_*$ for $\lambda\in(0,2)$. Lastly, from (\ref{eq:etaivp}) we have that
\begin{equation}
\label{eq:etaivpest1}
dt=\mathcal{\bar{P}}_0(t)^{2\lambda}d\eta.
\end{equation}
Then using (\ref{eq:estk01})i) on the above gives
\begin{equation}
\label{eq:etaivpest2}
t_*-t\sim C\int_{\eta}^{\eta_*}{(\eta_*-\mu)^{\lambda-2}d\mu}.
\end{equation}
As a result $t_*=+\infty$ for $\lambda\in(0,1]$ but $0<t_*<+\infty$ if $\lambda\in(1,2)$. 

Last suppose $(\lambda,\kappa)\in(2,+\infty)\times\mathbb{R}^-$. Then letting $i=1$ in (\ref{eq:uxak<0}), and using (\ref{eq:estk01})iii) and (\ref{eq:estk12}), we find that
$$u_x(\gamma(\alpha_1,t),t)\sim\frac{C}{\eta_*-\eta}\to+\infty$$
as $\eta\uparrow\eta_*$. If instead $\alpha\neq\alpha_1$, the integral (\ref{eq:estk12}) in (\ref{eq:ux}), or (\ref{eq:uxak<0}), dominates and
$$u_x(\gamma(\alpha,t),t)\sim-C(\eta_*-\eta)^{-\left(\frac{1}{2}+\frac{1}{\lambda}\right)}\to-\infty.$$
Moreover, using (\ref{eq:rho}), (\ref{eq:qpos}) and (\ref{eq:estk01})iii), we find that, as in the previous case,  $\rho\circ\gamma\equiv0$ whenever $\rho_0$ is zero, while for $\alpha\notin\{\alpha_i\}$ such that $\rho_0(\alpha)\neq0$, $\rho\circ\gamma\to C\in\mathbb{R}^+$ as $\eta\uparrow\eta_*$. Finally, the existence of a finite blow-up time $t_*>0$ for $u_x\circ\gamma$ follows from (\ref{eq:estk01})iii) and (\ref{eq:etaivpest1}) as $\eta\uparrow\eta_*$. Also, by using (\ref{eq:estk01})ii) and a similar argument as above, it can be shown that $(u_x,\rho)\circ\gamma$, for $(\lambda,\kappa)\in\{2\}\times\mathbb{R}^-$, behave as in the case $(\lambda,\kappa)\in(2,+\infty)\times\mathbb{R}^-$. This concludes the proof of parts (\ref{it:3ak<0}) and (\ref{it:4ak<0}), and thus establishes the Theorem.
\end{proof}

%
%
%

\subsection{Regularity Results for $\lambda\kappa>0$}\hfill
\label{subsec:regularityak>0}

In this section, we are concerned with regularity properties of (\ref{eq:ux}) and (\ref{eq:rho}) for $\Omega$ in (\ref{eq:omegaset}) non-empty and parameters $\lambda\kappa>0$.\footnote[7]{The case $\Omega=\emptyset$ follows similarly.} Below we will see how, of the two cases $\lambda\kappa<0$ or $\lambda\kappa>0$, the latter represents the ``most singular'' in the sense that, relative to a class of smooth initial data, spontaneous singularities may now form in $\rho$. This should not come as a surprise if we note that for $\lambda\kappa>0$, as opposed to $\lambda\kappa<0$, the discriminant (\ref{eq:D}) now satisfies $\mathcal{D}(\alpha)\geq0$, and so a root (\ref{eq:eta*}) of single multiplicity corresponding to $\overline\alpha\in[0,1]$ with $\rho_0(\overline\alpha)\neq0$, may now occur. Furthermore, we remind the reader that only the case where (\ref{eq:eta*}) is a single root of $\mathcal{Q}$ is considered in this section. Although such case arises the most for $\rho_0(\alpha)\not\equiv0$, in Appendix \ref{sec:doublemult} regularity results for the instance of a double multiplicity root are presented and examples of nonsmooth initial data for which it occurs are given.

\begin{theorem}
\label{thm:ak>0lambdanegkappapos}
Consider the initial boundary value problem (\ref{eq:hs})-(\ref{eq:pbc}) for $(\lambda,\kappa)\in\mathbb{R}^-\times\mathbb{R}^-$. Let sets $\Omega$ and $\Sigma$ be defined as in (\ref{eq:omegaset}) and (\ref{eq:sigma}), and denote by $\overline\alpha$ the finite number of locations in $[0,1]$ where the largest of
\begin{equation}
\label{eq:space2}
M\equiv\max_{\stackrel{\alpha\in\Omega}{\rho_0(\alpha)\neq0}}\{2\lambda u_0'(\alpha)\},\,\,\,\,\,\,\,\,\,\,\,\,\,\,\,\,\,\,\,\,\,\,\,\,N\equiv\max_{\stackrel{\alpha\in\Sigma}{\rho_0(\alpha)\neq0}}\{\lambda u_0'(\alpha)+\sqrt{\lambda\kappa}\left|\rho_0(\alpha)\right|\}
\end{equation}
is attained. Then there exist smooth initial data and a finite $t_*>0$ such that
\begin{enumerate}
\item\label{it:ak>011} For $(\lambda,\kappa)\in\mathbb{R}^-\times\mathbb{R}^-$, $u_x(\gamma(\overline\alpha,t),t)\to-\infty$ as $t\uparrow t_*$, whereas, if $\alpha\neq\overline\alpha$, it remains finite for $(\lambda,\kappa)\in(-1,0)\times\mathbb{R}^-$ and $0\leq t\leq t_*$, but diverges to plus infinity, as $t\uparrow t_*$, when $(\lambda,\kappa)\in(-\infty,-1]\times\mathbb{R}^-$. 

\item\label{it:ak>012} For $\rho_0(\overline\alpha)>0$ or $\rho_0(\overline\alpha)<0$, $\rho(\gamma(\overline\alpha,t),t)$ diverges, as $t\uparrow t_*$, to plus or respectively minus infinity, but remains finite otherwise.
\end{enumerate}
\end{theorem}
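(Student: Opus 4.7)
The plan is to proceed in the spirit of the argument used for Theorem \ref{thm:blow1ak<0}, but now exploiting the fact that the discriminant $\mathcal{D}(\alpha)=4\lambda\kappa\rho_0(\alpha)^2\geq0$ allows $\mathcal{Q}$ to vanish at a single (rather than double) root, which in turn forces the leading singular behaviour through $\rho\circ\gamma$ as well as $u_x\circ\gamma$. The starting point is the identification of $\eta_*$ from (\ref{eq:etaomega})-(\ref{eq:etasigma}) with the definitions (\ref{eq:space2}): by construction $\overline\alpha$ realizes $\eta_*=1/\max\{M,N\}$, and $\rho_0(\overline\alpha)\neq0$. I would separate cases according to whether $\overline\alpha\in\Omega$ (so $\mathcal{Q}(\overline\alpha,t)=1-M\eta\to 0$ linearly) or $\overline\alpha\in\Sigma$ (so one of the linear factors in (\ref{eq:fac1}) vanishes while the other stays strictly positive); in either situation $\mathcal{Q}(\overline\alpha,t)$ is of order $(\eta_*-\eta)$ near $\eta_*$, and $\lambda u_0'(\overline\alpha)-\eta C(\overline\alpha)$ tends to a strictly positive constant (namely $M/2$ or $\sqrt{\lambda\kappa}|\rho_0(\overline\alpha)|$).

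Next I would localize around $\overline\alpha$. Assuming smooth initial data with a nondegenerate maximum of the relevant function ($2\lambda u_0'$ on $\Omega$ or $g_1$ on $\Sigma$), a Taylor expansion yields a uniform estimate of the form
\begin{equation*}
\mathcal{Q}(\alpha,t)\sim A\bigl(\eta_*-\eta+B(\alpha-\overline\alpha)^2\bigr),\qquad A,B>0,
\end{equation*}
for $|\alpha-\overline\alpha|$ and $\eta_*-\eta$ small. Because $\lambda<0$ gives the positive exponent $-1/(2\lambda)>0$, the integrand of $\bar{\mathcal{P}}_0$ is bounded and in fact vanishes at $\overline\alpha$, so $\bar{\mathcal{P}}_0(t)$ stays bounded between two positive constants as $\eta\uparrow\eta_*$. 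For the other integral
\begin{equation*}
I_1(t)=\int_0^1\frac{\lambda u_0'(\alpha)-\eta(t)C(\alpha)}{\mathcal{Q}(\alpha,t)^{1+\frac{1}{2\lambda}}}\,d\alpha,
\end{equation*}
I would use the substitution $\alpha-\overline\alpha=\sqrt{(\eta_*-\eta)/B}\,\tau$ (exactly as in the Gamma-function manipulation (\ref{eq:long1})-(\ref{eq:est2})) to reduce the behaviour near $\overline\alpha$ to a $\tau$-integral of $(1+\tau^2)^{-(1+1/(2\lambda))}$. The key exponent is $\beta:=1+\frac{1}{2\lambda}$, and the transition across $\beta=1/2$, i.e.\ $\lambda=-1$, determines convergence: for $\lambda\in(-1,0)$ the $\tau$-integral is tame and $I_1(t)=O(1)$, whereas for $\lambda<-1$ the integral diverges at rate $(\eta_*-\eta)^{-\frac{1}{2}-\frac{1}{2\lambda}}$, with a logarithmic borderline at $\lambda=-1$ that I would treat separately.

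Combining these estimates in the representation (\ref{eq:ux}), the space-dependent term $(\lambda u_0'(\overline\alpha)-\eta C(\overline\alpha))/\mathcal{Q}(\overline\alpha,t)$ blows up like $(\eta_*-\eta)^{-1}$, which dominates $I_1/\bar{\mathcal{P}}_0$ for every $\lambda<0$ (since $-1<-\frac12-\frac{1}{2\lambda}$). Multiplying by $1/\lambda<0$ gives $u_x(\gamma(\overline\alpha,t),t)\to-\infty$, establishing (\ref{it:ak>011}) at $\overline\alpha$. For $\alpha\neq\overline\alpha$, the space-dependent term stays finite, so the sign and magnitude of the contribution $-I_1/(\lambda\bar{\mathcal{P}}_0)$ control the outcome: with $\lambda<0$, $1/\lambda<0$, and a positive numerator $\lambda u_0'-\eta C$ near $\overline\alpha$ giving $I_1>0$, we obtain boundedness for $\lambda\in(-1,0)$ and divergence to $+\infty$ for $\lambda\leq-1$, as claimed. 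Part (\ref{it:ak>012}) follows immediately from (\ref{eq:rho}): since $\bar{\mathcal{P}}_0^{-2\lambda}$ stays bounded and $\mathcal{Q}(\overline\alpha,t)\to0^+$ linearly while $\rho_0(\overline\alpha)\neq0$, the sign of $\rho_0(\overline\alpha)$ dictates the blow-up direction; for $\alpha\neq\overline\alpha$ the denominator stays bounded away from zero, yielding boundedness. Finiteness of $t_*$ follows from (\ref{eq:t*}) together with the boundedness of $\bar{\mathcal{P}}_0$. The main technical obstacle is the precise asymptotic evaluation of $I_1$ near the threshold $\lambda=-1$; this is where the Beta-function identity (\ref{eq:gamma}) and the careful bookkeeping of which part of $[0,1]$ contributes leading-order versus lower-order terms must be handled with the same care as in the proof of Theorem \ref{thm:blow1ak<0}.
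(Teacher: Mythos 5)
Your proposal is correct and follows essentially the same route as the paper: identify $\eta_*=1/\max\{M,N\}$, note that $\bar{\mathcal{P}}_0$ stays bounded between positive constants since $-\tfrac{1}{2\lambda}>0$, localize $\mathcal{Q}$ near $\overline\alpha$ by a Taylor expansion, and use the Beta-function scaling to obtain the rate $(\eta_*-\eta)^{-\frac{1}{2}-\frac{1}{2\lambda}}$ for the second integral with the threshold at $\lambda=-1$, after which the comparison of exponents and the positivity of $\lambda u_0'-\eta C$ near $\overline\alpha$ (the paper's Remark \ref{rem:special1}) yield both parts. The only cosmetic differences are that the paper first disposes of $\lambda\in[-1/2,0)$ as a trivial subcase and defers the convergence of the second integral for $\lambda\in(-1,-1/2)$ to the hypergeometric argument of Appendix \ref{sec:furtherintegrals}, whereas you reach the same conclusions directly from the exponent $1+\frac{1}{2\lambda}$ versus $\frac{1}{2}$.
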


\begin{proof}

We consider the case where $\eta_*>0$, the earliest zero of $\mathcal{Q}$, has multiplicity one. Refer to Corollary \ref{coro:special1} in Appendix \ref{sec:doublemult} for the double multiplicity case ($N>M$ with $\rho_0(\overline\alpha)=0$), and see Appendix \ref{sec:simplecases} for $\Omega=\emptyset$. 

For $(\lambda,\kappa)\in\mathbb{R}^-\times\mathbb{R}^-$, let $\overline\alpha\in[0,1]$ denote the finite number of points where the largest between $M$ and, if it exists, $N$, both as defined in (\ref{eq:space2}), is attained\footnote[8]{If $N$ is not defined simply use $M$.}. Without loss of generality, we will assume that $N$ exists and $N>M$; otherwise, you may use an almost identical argument to the one presented below. Set 
$$\eta_*=\frac{1}{N}.$$
Then the space-dependent term in (\ref{eq:ux}) will vanish, first, when $\alpha=\overline\alpha$ as $\eta\uparrow\eta_*$. However, we still need to consider the behaviour of the integral terms in (\ref{eq:integrals}). As in the proof of Theorem \ref{thm:blow1ak<0}, we begin with the simple case where $(\lambda,\kappa)\in[-1/2,0)\times\mathbb{R}^-$. For such values of $\lambda$, the integral terms in (\ref{eq:ux}) and (\ref{eq:rho}) remain finite for smooth enough initial data and, thus, the space-dependent term in (\ref{eq:ux}) leads to blow-up, i.e. for $\alpha=\overline\alpha$,
\begin{equation*}
u_x(\gamma(\overline\alpha,t),t)\to-\infty
\end{equation*}
as $\eta\uparrow\eta_*$. In contrast, if  $\alpha\neq\overline\alpha$, then the space-dependent term, and consequently $u_x(\gamma(\alpha,t),t)$, remain finite for all $0\leq\eta\leq\eta_*$ and $(\lambda,\kappa)\in[-1/2,0)\times\mathbb{R}^-$. Moreover, (\ref{eq:space2})ii) implies that $\rho_0(\overline\alpha)\neq0$, consequently (\ref{eq:rho}) and boundedness of (\ref{eq:integrals})i) yields, as $\eta\uparrow\eta_*$, 
\begin{equation}
\label{eq:rhoblow1}
\rho(\gamma(\overline\alpha,t),t)\to
\begin{cases}
+\infty,\,\,\,\,\,\,\,\,\,\,\,\,\,\,\,&\rho_0(\overline\alpha)>0,
\\
-\infty,\,\,\,\,\,\,\,\,\,\,\,\,\,\,\,&\rho_0(\overline\alpha)<0,
\end{cases}
\end{equation}
but remains finite otherwise. The existence of a finite blow-up time $t_*>0$ in this case follows from (\ref{eq:t*}) in the limit as $\eta\uparrow\eta_*$. Actually, because only the integral term (\ref{eq:integrals})i) appears in (\ref{eq:rho}), we have in fact established part (\ref{it:ak>012}) of the Theorem.

To finish the proof of part (\ref{it:ak>011}), let $(\lambda,\kappa)\in(-\infty,-1/2)\times\mathbb{R}^-$, so that estimates on (\ref{eq:integrals})ii), as $\eta\uparrow\eta_*$, are needed. We will use the approach in Theorem \ref{thm:blow1ak<0} (recall estimate (\ref{eq:estk11})). For $\overline\alpha$ as defined above, namely $N=g_1(\overline\alpha)>M$ with $g_1$ as in (\ref{eq:g})i), suppose $g'(\overline\alpha)=0$ and $g''(\overline\alpha)<0$. Then smoothness of the initial data implies, by a simple Taylor expansion about $\overline\alpha$, that
$$g_1(\alpha)\sim N+C_1(\alpha-\overline\alpha)^2,\,\,\,\,\,\,\,\,\,\,\,\,\,\,\,\,C_1=\frac{g''(\overline\alpha)}{2}<0$$
for $0\leq\left|\alpha-\overline\alpha\right|\leq r$ and small $r>0$. Consequently, for $\epsilon>0$ small,
$$\epsilon-g_1(\alpha)+N\sim\epsilon+\left|C_1\right|(\alpha-\overline\alpha)^2,$$
so that
\begin{equation}
\label{eq:ak>0eq1}
\begin{split}
\int_{\overline\alpha-r}^{\overline\alpha+r}{\frac{d\alpha}{(\epsilon-g_1(\alpha)+N)^{1+\frac{1}{2\lambda}}}}&\sim\int_{\overline\alpha-r}^{\overline\alpha+r}{\frac{d\alpha}{(\epsilon+\left|C_1\right|(\alpha-\overline\alpha)^2)^{1+\frac{1}{2\lambda}}}}
\\
&\sim\frac{2\epsilon^{-\frac{1}{2}\left(1+\frac{1}{\lambda}\right)}}{\sqrt{\left|C_1\right|}}\int_0^{\frac{\pi}{2}}{(\cos\theta)^{\frac{1}{\lambda}}d\theta}.
\end{split}
\end{equation}
Following the derivation of (\ref{eq:estk11}), with $\epsilon=\frac{1}{\eta}-N$ instead, we find that
\begin{equation}
\label{eq:ak>0eq2}
\begin{split}
\int_{0}^{1}{\frac{\lambda u_0'(\alpha)-\eta(t)C(\alpha)}{\mathcal{Q}(\alpha,t)^{1+\frac{1}{2\lambda}}}d\alpha}\sim\frac{ C_6}{(1-\eta(t)N)^{^{\frac{1}{2}\left(1+\frac{1}{\lambda}\right)}}}
\end{split}
\end{equation}
for $\eta_*-\eta(t)>0$ small, $\lambda\in(-\infty,-1)$, and
$$C_6=\frac{\Gamma\left(\frac{1}{2}+\frac{1}{2\lambda}\right)}{\Gamma\left(1+\frac{1}{2\lambda}\right)}\sqrt{\frac{\pi N}{\left|C_1\right|}}\in\mathbb{R}^+.$$
Setting $\alpha=\overline\alpha$ into (\ref{eq:ux}) and using (\ref{eq:ak>0eq2}) leads to a dominating space-dependent term,
$$u_x(\gamma(\overline\alpha,t),t)\sim-\frac{C}{1-\eta(t)N}\to-\infty$$
as $\eta\uparrow\eta_*$ for $(\lambda,\kappa)\in(-\infty,-1]\times\mathbb{R}^-$. If $\alpha\neq\overline\alpha$, (\ref{eq:ak>0eq2}) takes control and
$$u_x(\gamma(\alpha,t),t)\sim\frac{C}{(1-\eta(t)N)^{\frac{1}{2}\left(1+\frac{1}{\lambda}\right)}}\to+\infty.$$
The value $\lambda=-1$ is considered separately; it yields a logarithmic blow-up rate for (\ref{eq:integrals})ii) which leads to the same blow-up behaviour as above for $u_x\circ\gamma$. By slightly modifying the argument outlined in Appendix \ref{sec:furtherintegrals}, it can be shown that for $\lambda\in(-1,-1/2)$, (\ref{eq:integrals})ii) converges as $\eta\uparrow\eta_*$. As a result, the regularity results derived above for $\lambda\in[-1/2,0)$ apply. This concludes the proof of the Theorem. For specific examples, the reader may turn to \S\ref{sec:examples}.
\end{proof}


\begin{remark}
\label{rem:special1}
To derive (\ref{eq:ak>0eq2}) above, we have assumed that the top and the bottom terms in the integrand of (\ref{eq:integrals})ii) do not vanish simultaneously as $\eta\uparrow\eta_*$. Recall that neither $\eta_*=\frac{1}{N}$ nor, when applicable, $\eta_*=\frac{1}{M}$ are double roots of $\mathcal{Q}$. This and the identity
$$-\frac{1}{2}\frac{\partial\mathcal{Q}}{\partial\eta}=\lambda u_0'(\alpha)-\eta(t)C(\alpha)$$
imply that a simultaneous vanishing of both terms in (\ref{eq:integrals})ii), as $\eta\uparrow\eta_*$, occurs only if $\eta_*$ is a double root. In fact, the term on the right above remains positive near $\overline\alpha$ as $\eta\uparrow\eta_*$. Indeed, since $C(\alpha)=g_1(\alpha)g_2(\alpha)$ for $g_1$ and $g_2$ as in (\ref{eq:g}) and $C(\alpha)$ in (\ref{eq:c0}), we see that for $\alpha\sim\overline\alpha$,
\begin{equation}
\label{eq:limit12}
\lambda u_0'(\alpha)-\eta(t)C(\alpha)\sim\lambda u_0'(\overline\alpha)-\eta(t)Ng_2(\overline\alpha),
\end{equation}
and then, as $\eta\uparrow\eta_*=\frac{1}{N}$,
\begin{equation}
\label{eq:limit1}
\lambda u_0'(\alpha)-\eta(t)C(\alpha)\to\lambda u_0'(\overline\alpha)-g_2(\overline\alpha)=\sqrt{\lambda\kappa}\left|\rho_0(\overline\alpha)\right|.
\end{equation}
But $\eta_*$ is not a double root of $\mathcal{Q}$, that is $\mathcal{D}(\overline\alpha)$ in (\ref{eq:D}) is positive, which gives $\rho_0(\overline\alpha)\neq0$ and, thus, the term on the right is positive.
\end{remark}

\begin{remark}
Notice that, if $N$ does not exist, $g_1$ cannot attain positive values greater than $M$ at infinitely many points in $\Sigma$. Indeed, suppose $N$ does not exist and recall that $\Omega$ is assumed to be a discrete finite set. Then there is $\tilde\alpha\in\Omega$ such that
\begin{equation}
\label{eq:eq1}
g_1(\alpha)=\lambda u_0'(\alpha)+\sqrt{\lambda\kappa\rho_0(\alpha)^2}<\lambda u_0'(\tilde\alpha)+\sqrt{\lambda\kappa\rho_0(\tilde\alpha)^2}
\end{equation}
for all $\alpha\in\Sigma$. But $\tilde\alpha\in\Omega$ and (\ref{eq:c0}) imply $\lambda\kappa\rho_0(\tilde\alpha)^2=\left(\lambda u_0'(\tilde\alpha)\right)^2$, which we substitute into the right-hand-side of (\ref{eq:eq1}) to obtain
\begin{equation}
\label{eq:eq2}
g_1(\alpha)<
\begin{cases}
2\lambda u_0'(\tilde\alpha),\,\,\,\,\,\,\,\,\,\,\,\,\,&\lambda u_0'(\tilde\alpha)\geq0,
\\
0,\,\,\,\,\,\,\,\,\,\,\,\,\,&\lambda u_0'(\tilde\alpha)<0.
\end{cases}
\end{equation}
Clearly, if (\ref{eq:etaomega}) holds, $2\lambda u_0'(\tilde\alpha)$ above becomes $M$ for $\tilde\alpha=\overline\alpha$. The above argument justifies (\ref{eq:etaomega}) as the earliest $\eta-$value causing blow-up of the space-dependent terms in the case where $N$ does not exist. A similar argument to the one above may be used for the case where $g_2$ in (\ref{eq:g})ii) has a positive maximum. 
\end{remark}

This last part of the paper studies the most singular class of solutions. Suppose $\lambda$ and $\kappa$ satisfy $\lambda\kappa>0$ with $(\lambda,\kappa)\in\mathbb{R}^+\times\mathbb{R}^+$. Below we show the existence of smooth initial data for which, as long as the associated root $\eta_*$ of $\mathcal{Q}$ has single multiplicity, both $u_x$ and $\rho$ diverge in finite time for all $(\lambda,\kappa)\in\mathbb{R}^+\times\mathbb{R}^+$. More particularly, $u_x$ undergoes a two-sided, everywhere blow-up, while $\rho$ diverges at finitely many points to either plus or minus infinity. In contrast, if $\eta_*$ represents a double root, then both global-in-time and blow-up solutions exist. For the latter result refer to Corollary \ref{coro:singular2} in Appendix \ref{sec:doublemult}.

\begin{theorem}
\label{thm:singular}
Consider the initial boundary value problem (\ref{eq:hs})-(\ref{eq:pbc}) for $(\lambda,\kappa)\in\mathbb{R}^+\times\mathbb{R}^+$. Let sets $\Omega$ and $\Sigma$ be as in (\ref{eq:omegaset}) and (\ref{eq:sigma}), and denote by $\overline\alpha$ the finite number of locations in $[0,1]$ where the largest of
\begin{equation}
\label{eq:space3}
M\equiv\max_{\stackrel{\alpha\in\Omega}{\rho_0(\alpha)\neq0}}\{2\lambda u_0'(\alpha)\},\,\,\,\,\,\,\,\,\,\,\,\,\,\,\,\,\,\,\,\,\,\,\,\,N\equiv\max_{\stackrel{\alpha\in\Sigma}{\rho_0(\alpha)\neq0}}\{\lambda u_0'(\alpha)+\sqrt{\lambda\kappa}\left|\rho_0(\alpha)\right|\}
\end{equation}
is attained. Then there exist smooth initial data and a finite $t_*>0$ such that, as $t\uparrow t_*$, $u_x(\gamma(\overline\alpha,t),t)\to+\infty$, while $u_x\circ\gamma\to-\infty$ otherwise. Moreover, if $\rho_0(\overline\alpha)>0$ or $\rho_0(\overline\alpha)<0$, $\rho(\gamma(\overline\alpha,t),t)$ diverges to plus or respectively minus infinity as $t\uparrow t_*$, whereas, for $\alpha\neq\overline\alpha$ such that $\rho_0(\overline\alpha)\neq0$, $\rho\circ\gamma$ vanishes as $t\uparrow t_*$ for $(\lambda,\kappa)\in(0,1]\times\mathbb{R}^+$, but converges to a non-trivial steady state when $(\lambda,\kappa)\in(1,+\infty)\times\mathbb{R}^+$. Lastly, if $\alpha\neq\overline\alpha$ but $\rho_0(\alpha)=0$, then $\rho\circ\gamma\equiv0$ for all time and $(\lambda,\kappa)\in\mathbb{R}^+\times\mathbb{R}^+$.

\end{theorem}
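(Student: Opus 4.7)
The plan is to adapt the argument of Theorem \ref{thm:ak>0lambdanegkappapos} almost verbatim, since the structural setup is identical once the signs are tracked. Without loss of generality I assume $N>M$, so that the earliest root of $\mathcal{Q}$ is $\eta_*=1/N$, arising from a single factor $(1-\eta g_1(\alpha))$ that vanishes at each point $\overline\alpha$ with $g_1(\overline\alpha)=N$. I choose the smooth initial data so that $g_1'(\overline\alpha)=0$ and $g_1''(\overline\alpha)<0$, yielding the Taylor expansion $g_1(\alpha)\sim N+C_1(\alpha-\overline\alpha)^2$ with $C_1<0$ in a small neighborhood of $\overline\alpha$. Since $\eta_*$ is a single root of $\mathcal{Q}$, the complementary factor $(1-\eta_* g_2(\overline\alpha))=2\sqrt{\lambda\kappa}|\rho_0(\overline\alpha)|/N$ is a strictly positive constant, and Remark \ref{rem:special1} guarantees $\lambda u_0'(\alpha)-\eta C(\alpha)\to\sqrt{\lambda\kappa}|\rho_0(\overline\alpha)|>0$ near $\overline\alpha$ as $\eta\uparrow\eta_*$.

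Next I would compute the asymptotic behaviour of the two integrals in (\ref{eq:ux}) and (\ref{eq:rho}) by the same trigonometric substitution $\sqrt{|C_1|/(\epsilon N)}(\alpha-\overline\alpha)=\tan\theta$ that led to (\ref{eq:estk11}) and (\ref{eq:ak>0eq2}). The essential change from Theorem \ref{thm:ak>0lambdanegkappapos} is that the exponent $1/(2\lambda)$ of $\mathcal{Q}$ is now positive, so the model integral $\int_{-\infty}^{\infty}du/(1+u^2)^{1/(2\lambda)}$ splits into three regimes: it converges for $\lambda\in(0,1)$, yielding $\bar{\mathcal{P}}_0\sim C_A(\eta_*-\eta)^{1/2-1/(2\lambda)}\to+\infty$; it diverges logarithmically at $\lambda=1$, yielding $\bar{\mathcal{P}}_0\sim -C\log(\eta_*-\eta)$; and for $\lambda>1$ the singularity is $\alpha$-integrable so $\bar{\mathcal{P}}_0$ converges to a finite positive constant. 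The second integral, with exponent $1+1/(2\lambda)$, always converges in $u$ and satisfies $\int_0^1(\lambda u_0'-\eta C)/\mathcal{Q}^{1+1/(2\lambda)}\,d\alpha\sim C_B(\eta_*-\eta)^{-1/2-1/(2\lambda)}\to+\infty$ for every $\lambda>0$.

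To obtain the claimed blow-up of $u_x$ I would substitute these estimates into (\ref{eq:ux}). For $\alpha\neq\overline\alpha$ the first (space-dependent) term stays bounded while the subtracted integral term diverges with positive coefficient; combined with the positive prefactor $\bar{\mathcal{P}}_0^{-2\lambda}/\lambda$ this yields $u_x\circ\gamma\to-\infty$. For $\alpha=\overline\alpha$ the space term behaves like $1/(2(\eta_*-\eta))$ and in the regime $\lambda\in(0,1)$ both terms inside the braces are of the same order $(\eta_*-\eta)^{-1}$. The key algebraic step is the Gamma-ratio identity: using $\Gamma(1+y)=y\Gamma(y)$ and $\Gamma(a+1/2)/\Gamma(a-1/2)=a-1/2$ with $a=1/(2\lambda)$, the coefficient $C_B/C_A$ collapses to $(N/2)(1-\lambda)$, leaving a positive residual of size $N\lambda/(2(\eta_*-\eta))$ in the braces and hence $u_x(\gamma(\overline\alpha,t),t)\to+\infty$. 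For $\lambda>1$ the first term automatically dominates since $1/2+1/(2\lambda)<1$, and $\lambda=1$ is handled by tracking the logarithmic corrections.

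Finally, the $\rho$-statements follow directly from (\ref{eq:rho}). At $\overline\alpha$ one has $\mathcal{Q}(\overline\alpha,t)\sim 2\sqrt{\lambda\kappa}|\rho_0(\overline\alpha)|(\eta_*-\eta)$, and combining with the three regimes for $\bar{\mathcal{P}}_0^{-2\lambda}$ gives $\rho(\gamma(\overline\alpha,t),t)\sim C\rho_0(\overline\alpha)(\eta_*-\eta)^{-\lambda}$ up to a logarithmic correction at $\lambda=1$, yielding signed divergence. For $\alpha\neq\overline\alpha$ with $\rho_0(\alpha)\neq 0$ the quotient $\rho_0(\alpha)/\mathcal{Q}(\alpha,t)$ stays bounded and bounded away from zero as $\eta\uparrow\eta_*$, so the time behaviour is governed entirely by $\bar{\mathcal{P}}_0^{-2\lambda}$, which vanishes for $\lambda\in(0,1]$ and approaches a positive constant for $\lambda>1$. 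The case $\rho_0(\alpha)=0$ is immediate from (\ref{eq:rho}). Finiteness of $t_*$ follows from (\ref{eq:etaivp}) since $\bar{\mathcal{P}}_0(\mu)^{2\lambda}$ is locally integrable in $\mu$ near $\eta_*$ in all three regimes. The main obstacle is the precise coefficient comparison at $\overline\alpha$ that rules out a miraculous cancellation, in particular the borderline $\lambda=1$ case where both competing terms are of the same order in $(\eta_*-\eta)^{-1}$ and the Gamma-function identity has to be supplemented by the logarithmic regularization of $\bar{\mathcal{P}}_0$.
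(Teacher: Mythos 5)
Your proposal is correct and follows essentially the same route as the paper's proof: the same Taylor expansion of $g_1$ about $\overline\alpha$, the same trigonometric substitution yielding the three regimes $\lambda\in(0,1)$, $\lambda=1$, $\lambda>1$ for $\bar{\mathcal{P}}_0$ and the universal rate $(\eta_*-\eta)^{-\frac{1}{2}-\frac{1}{2\lambda}}$ for the second integral, the same Gamma-function identity to show the coefficient at $\overline\alpha$ reduces to a positive multiple of $\lambda$, and the same treatment of $\rho$ and of the finiteness of $t_*$ via (\ref{eq:etaivp}). Your coefficient bookkeeping at $\overline\alpha$ (tracking the factor $N/2$ coming from the non-vanishing factor $1-\eta_* g_2(\overline\alpha)$ so that the braces collapse to a residual proportional to $\lambda$) is in fact slightly more explicit than the paper's, which absorbs this into generic constants.
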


\begin{proof}
As with Theorem \ref{thm:ak>0lambdanegkappapos}, we consider the case where $\eta_*>0$ has multiplicity one. More particularly, and without loss of generality, assume $N$ in (\ref{eq:space3})ii) exists and $N>M$, so that the least, positive zero of $\mathcal{Q}$ with single multiplicity is given by
$$\eta_*=\frac{1}{N}.$$
The reader may refer to Corollary \ref{coro:singular2} in Appendix \ref{sec:doublemult} for the double multiplicity case $N>M$ with $\rho_0(\overline\alpha)=0$ as well as Appendix \ref{sec:simplecases} for $\Omega=\emptyset$. 

Let $(\lambda,\kappa)\in\mathbb{R}^+\times\mathbb{R}^+$ and denote by $\overline\alpha\in[0,1]$ the finite number of points where $N$ is attained, that is $N=g_1(\overline\alpha)$ for $g_1$ as in (\ref{eq:g})i). More particularly, we will be concerned with a class of smooth initial data for which $g_1'(\overline\alpha)=0$ and $g_1''(\alpha)\neq0$. Therefore, following an argument similar to that of Theorem \ref{thm:ak>0lambdanegkappapos}, we use a Taylor expansion of $g_1$ around $\overline\alpha$ to derive, for $\eta_*-\eta>0$ small, 
\begin{equation}
\label{eq:lastest1}
\mathcal{\bar{P}}_0(t)\sim
\begin{cases}
C_7(1-N\eta(t))^{\frac{1}{2}-\frac{1}{2\lambda}},\,\,\,\,\,\,\,\,\,\,\,\,\,&\lambda\in(0,1),
\\
-C\ln(\eta_*-\eta),\,\,\,\,\,\,\,\,\,\,\,\,\,&\lambda=1,
\\
C,\,\,\,\,\,\,\,\,\,\,\,\,&\lambda\in(1,+\infty)
\end{cases}
\end{equation}
where
$$C_7=\frac{\Gamma\left(\frac{1}{2\lambda}-\frac{1}{2}\right)}{\Gamma\left(\frac{1}{2\lambda}\right)}\sqrt{\frac{\pi N}{\left|C_1\right|}}\in\mathbb{R}^+,\,\,\,\,\,\,\,\,\,\,\,\,\,C_1=\frac{g_1''(\overline\alpha)}{2}<0.$$
The convergence result (\ref{eq:lastest1})iii) can be obtained via the hypergeometric series argument outlined in Appendix \ref{sec:furtherintegrals}. Further, for the integral term (\ref{eq:integrals})ii), we find that
\begin{equation}
\label{eq:lastest2}
\int_0^1{\frac{\lambda u_0'(\alpha)-\eta(t)C(\alpha)}{\mathcal{Q}(\alpha,t)^{1+\frac{1}{2\lambda}}}d\alpha}\sim C_8(1-N\eta(t))^{-\frac{1}{2}\left(1+\frac{1}{\lambda}\right)}
\end{equation}
for $\lambda\in\mathbb{R}^+$ and $C_8\in\mathbb{R}^+$ given by $$C_8=\frac{\Gamma\left(\frac{1}{2}\left(1+\frac{1}{\lambda}\right)\right)}{\Gamma\left(1+\frac{1}{2\lambda}\right)}\sqrt{\frac{\pi N}{\left|C_1\right|}}.$$

For $\kappa\in\mathbb{R}^+$, suppose $\lambda\in(0,1)$ and set $\alpha=\overline\alpha$ in (\ref{eq:ux}). We obtain
$$u_x(\gamma(\overline\alpha,t),t)\sim\frac{C}{(1-N\eta(t))^{\lambda}}\to+\infty$$
as $\eta\uparrow\eta_*$, whereas, for $\alpha\neq\overline\alpha$,
$$u_x(\gamma(\alpha,t),t)\sim-\frac{(1-\lambda)C}{(1-N\eta(t))^{\lambda}}\to-\infty.$$
Above we use the fact that (\ref{eq:gamma})ii) implies $\frac{C_8}{C_7}=1-\lambda>0$ for $\lambda\in(0,1)$. Moreover, blow-up occurs in finite time due to (\ref{eq:etaivpest1}) and (\ref{eq:lastest1})i), which yield the asymptotic relation
$$t_*-t\sim C\int_{\eta}^{\eta_*}{(\eta_*-\mu)^{\lambda-1}d\mu},\,\,\,\,\,\,\,\,\,\,\,\,\,\,\,\,\,\,\lambda\in(0,1).$$
Then integrating the above implies the existence of a finite $t_*>0$. The above results hold for $\lambda=1$ as well by following a similar argument with (\ref{eq:lastest1})ii) instead and recalling (\ref{eq:limit1}). 

Now let $\lambda\in(1,+\infty)$. Using (\ref{eq:lastest1})iii), (\ref{eq:lastest2}) and (\ref{eq:limit1}) on (\ref{eq:ux}) implies, for $\alpha=\overline\alpha$,
$$u_x(\gamma(\overline\alpha,t),t)\sim\frac{C}{1-N\eta(t)}\to+\infty$$
as $\eta\uparrow\eta_*$, while, if $\alpha\neq\overline\alpha$, (\ref{eq:lastest2}) dominates and
$$u_x(\gamma(\alpha,t),t)\sim-\frac{C}{(1-N\eta(t))^{\frac{1}{2}+\frac{1}{2\lambda}}}\to-\infty.$$
The existence of a finite blow-up time for $u_x$ follows from (\ref{eq:etaivpest1}) and (\ref{eq:lastest1})iii). This establishes the first part of the Theorem for $(\lambda,\kappa)\in\mathbb{R}^+\times\mathbb{R}^+$. Last we study the evolution of (\ref{eq:rho}). As mentioned above $\rho_0(\overline\alpha)\neq0$ due to the single multiplicity of $\eta_*$ and (\ref{eq:D}). Then (\ref{eq:rho}) and (\ref{eq:lastest1}) imply that, for $\alpha=\overline\alpha$,
\begin{equation}
\label{eq:lastrho1}
\rho(\gamma(\overline\alpha,t),t)\sim\frac{\rho_0(\overline\alpha)}{1-N\eta(t)}
\begin{cases}
\left(C_7(1-N\eta(t))^{\frac{1}{2}-\frac{1}{2\lambda}}\right)^{-2\lambda},\,\,\,\,\,\,\,\,\,\,\,\,\,&\lambda\in(0,1),
\\
\left(-C\ln(\eta_*-\eta)\right)^{-2},\,\,\,\,\,\,\,\,\,\,\,\,\,&\lambda=1,
\\
C,\,\,\,\,\,\,\,\,\,\,\,\,&\lambda\in(1,+\infty),
\end{cases}
\end{equation}
so that, as $t$ approaches the finite time $t_*>0$,
\begin{equation}
\label{eq:lastrho2}
\rho(\gamma(\overline\alpha,t),t)\to
\begin{cases}
+\infty,\,\,\,\,\,\,\,\,\,\,\,\,\,\,\,\,\,\,\,&\rho_0(\overline\alpha)>0,
\\
-\infty,\,\,\,\,\,\,\,\,\,\,\,\,\,\,\,\,\,\,\,&\rho_0(\overline\alpha)<0
\end{cases}
\end{equation}
for $(\lambda,\kappa)\in\mathbb{R}^+\times\mathbb{R}^+$. Finally, if $\alpha\neq\overline\alpha$ and $\rho_0(\alpha)=0$, $\rho\circ\gamma\equiv0$ for all time and $(\lambda,\kappa)\in\mathbb{R}^+\times\mathbb{R}^+$, whereas, if $\alpha\neq\overline\alpha$ is such that $\rho_0(\alpha)\neq0$, $\mathcal{Q}$ remains positive for all $0\leq\eta\leq\eta_*$ and (\ref{eq:rho}) yields
\begin{equation}
\label{eq:lastrho3}
\rho(\gamma(\alpha,t),t)\sim C\rho_0(\alpha)
\begin{cases}
C(1-N\eta(t))^{1-\lambda},\,\,\,\,\,\,\,\,\,\,\,\,\,&\lambda\in(0,1),
\\
C(\ln(\eta_*-\eta))^{-2},\,\,\,\,\,\,\,\,\,\,\,\,\,&\lambda=1,
\\
C,\,\,\,\,\,\,\,\,\,\,\,\,&\lambda\in(1,+\infty)
\end{cases}
\end{equation}
for $\eta_*-\eta>0$ small. Consequently, as $t\uparrow t_*$,
\begin{equation}
\label{eq:lastrho4}
\rho(\gamma(\alpha,t),t)\to
\begin{cases}
0,\,\,\,\,\,\,\,\,\,\,\,\,\,&\lambda\in(0,1],
\\
C\rho_0(\alpha),\,\,\,\,\,\,\,\,\,\,\,\,&\lambda\in(1,+\infty).
\end{cases}
\end{equation}
This concludes the proof of the Theorem.
\end{proof}

\section{Examples}
\label{sec:examples}
In this section, we use the \textsc{Mathematica} software to aid in the closed-form computation of time-dependent integral terms and the subsequent plotting of (\ref{eq:ux}) and (\ref{eq:rho}). Examples 1-4 are instances of Theorems \ref{thm:ak>0lambdanegkappapos}, \ref{thm:blow1ak<0}, \ref{thm:singular}, and \ref{thm:global1ak<0}, respectively. We remark that plots depict either (\ref{eq:ux}), or (\ref{eq:rho}), versus the variable $\eta$ instead of $t$.\footnote[9]{With the exception of Figures \ref{fig:ex1} and \ref{fig:ex4} where representations in $t$ were available.} Also, only Figure (\ref{fig:ex4}) uses the Eulerian variable, $x$, as opposed to the Lagrangian coordinate $\alpha$. For practical reasons, specific details of the computations in some examples are omitted. Lastly, and for simplicity in the computations, Example 4 use first component initial data, $u_0$, satisfying Dirichlet boundary conditions instead of periodic. See Remark \ref{rem:dirirem} at the end of \S\ref{sec:solution}.

\subsection{Example 1}

For $(\lambda,\kappa)=(-1/2,-1)$ take $u_0'(\alpha)=\cos(2\pi\alpha)$ and $\rho_0(\alpha)\equiv1/2$. Then $C(\alpha)=0$ implies that $\Omega=\{1/8,3/8,5/8,7/8\}$, so that $M=\max_{\Omega}\{2\lambda u_0'(\alpha)\}=\sqrt{2}/2$. Now 
$$g_1(\alpha)=\frac{1}{2}\left(\frac{1}{\sqrt{2}}-\cos(2\pi\alpha)\right)$$ 
with $N_1=g_1(1/2)$. Since $N_1>M$, we conclude that $\mathcal{Q}$ will vanish earliest when $\overline\alpha=1/2$ as 
$$\eta\uparrow\eta_*=\frac{2\sqrt{2}}{1+\sqrt{2}}\sim1.17.$$ 
Evaluating the integrals in (\ref{eq:integrals}) now yield
\begin{equation}
\label{eq:intex1}
\mathcal{\bar{P}}_0(t)\equiv1,\,\,\,\,\,\,\,\,\,\,\,\,\,\,\,\,\,\,\,\,\,\,\,\,\,\,\,\int_0^1{\frac{\lambda u_0'(\alpha)-\eta(t)C(\alpha)}{\mathcal{Q}(\alpha,t)^{1+\frac{1}{2\lambda}}}d\alpha}\equiv0,
\end{equation}
so that (\ref{eq:ux}) and (\ref{eq:rho}) become
\begin{equation}
\label{eq:ex1ux}
u_x(\gamma(\alpha,t),t)=\frac{8\cos(2\pi\alpha)+2t\cos(4\pi\alpha)}{8+8t\cos(2\pi\alpha)+t^2\cos(4\pi\alpha)}
\end{equation}
and
\begin{equation}
\label{eq:ex1p}
\rho(\gamma(\alpha,t),t)=\frac{4}{8+8t\cos(2\pi\alpha)+t^2\cos(4\pi\alpha)}.
\end{equation}
In the above we used $\eta(t)=t$, a consequence of (\ref{eq:etaivp}) and (\ref{eq:intex1})i), and which yields 
$$t_*=\eta_*=\frac{2\sqrt{2}}{1+\sqrt{2}}.$$
Letting $\alpha=1/2$ in (\ref{eq:ex1ux}), we see that 
$$u_x(\gamma(1/2,t),t)\to-\infty$$
but remains bounded for $\alpha\neq1/2$ and $0\leq t\leq t_*$. Similarly, since $\rho_0(\alpha)\equiv1/2>0$, 
$$\rho(\gamma(1/2,t),t)\to+\infty,$$
while it stays finite otherwise. See Figure \ref{fig:ex1} below.

\begin{center}
\begin{figure}[!ht]
\includegraphics[scale=0.33]{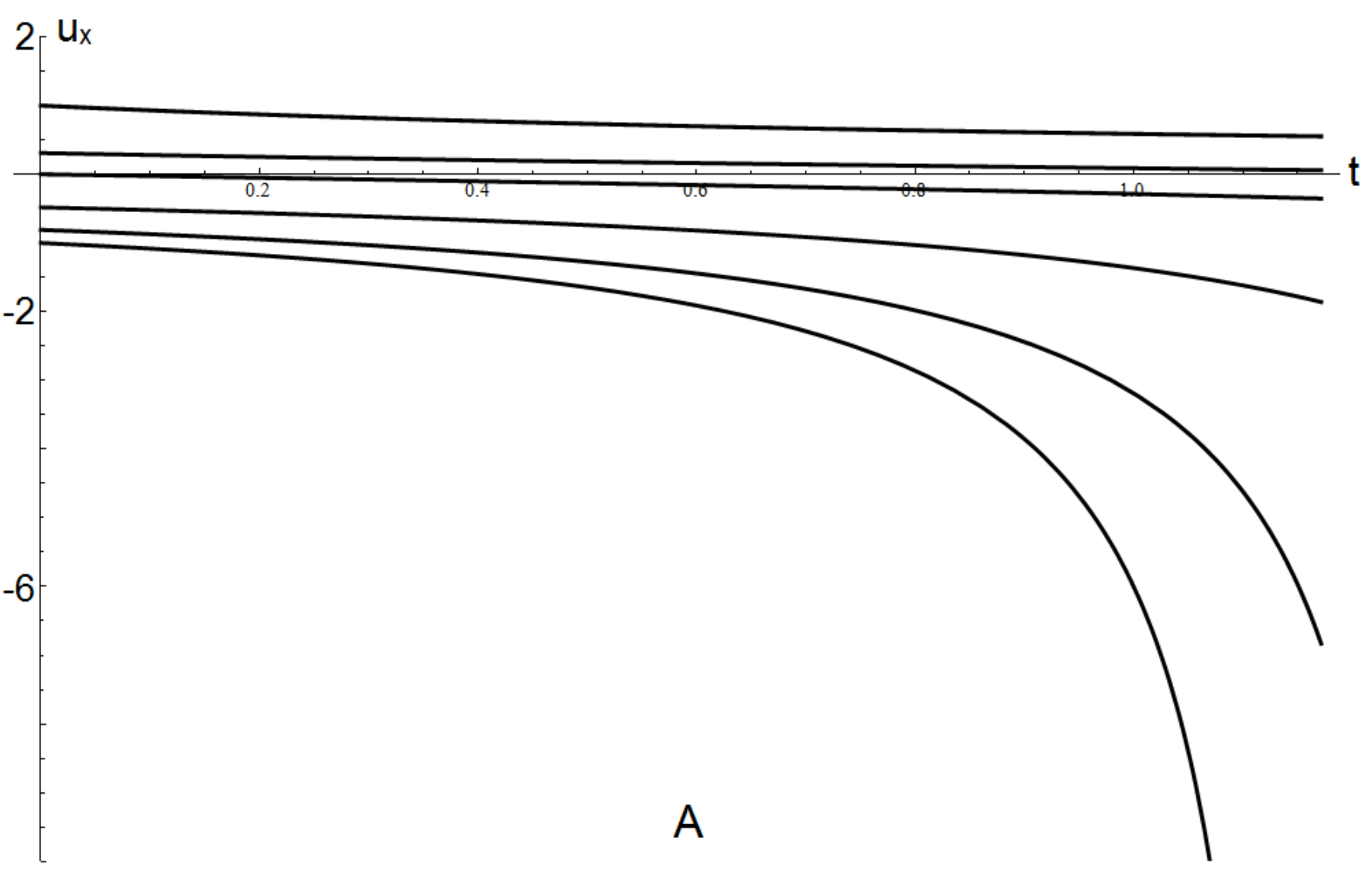} 
\includegraphics[scale=0.30]{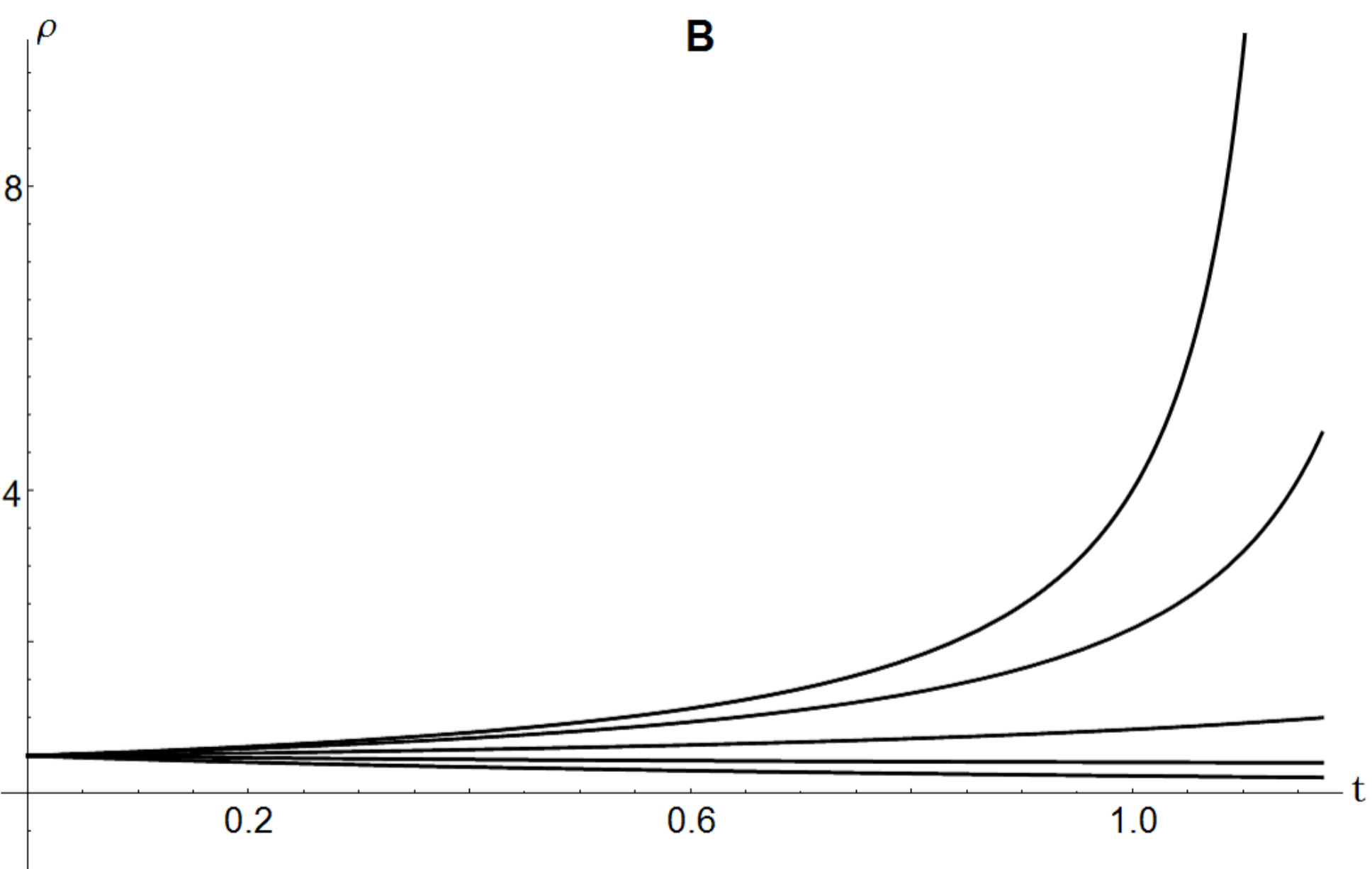} 
\caption{For Example 1 with $(\lambda,\kappa)=(-1/2,-1)$, Figures A and B represent blow-up of (\ref{eq:ex1ux}) and (\ref{eq:ex1p}) for $\alpha=1/2$ to $-\infty$ and respectively $+\infty$ as $t\uparrow t_*\sim1.17$. In contrast, if $\alpha\neq1/2$, both remain finite for $0\leq t\leq t_*$.}
\label{fig:ex1}
\end{figure}
\end{center}

\subsection{Example 2}

For $(\lambda,\kappa)=(-1,1)$, take $u_0'(\alpha)=\cos(2\pi\alpha)$ and $\rho_0(\alpha)=\sin(2\pi\alpha)$. We follow the criteria in Theorem \ref{thm:blow1ak<0}. Note that $\rho_0=0$ for $\alpha\in\{0,1/2,1\}$, but, of those points, $\lambda u_0'=-u_0'>0$ only at $\alpha=1/2$. Therefore $\alpha_1=1/2$ and $\eta_*=1$. Evaluation of the integrals in (\ref{eq:integrals}) yield solutions in terms of Elliptic integrals, while (\ref{eq:t*}), in the limit as $\eta\uparrow 1$, gives $t_*\sim0.86$. We find that $\rho$ stays bounded for all $\alpha\in[0,1]$ and $0\leq t\leq t_*$, whereas
$$u_x(\gamma(1/2,t),t)\to-\infty$$
as $t\uparrow t_*$, but remains finite otherwise. See Figure \ref{fig:ex2} below.     

\begin{center}
\begin{figure}[!ht]
\includegraphics[scale=0.33]{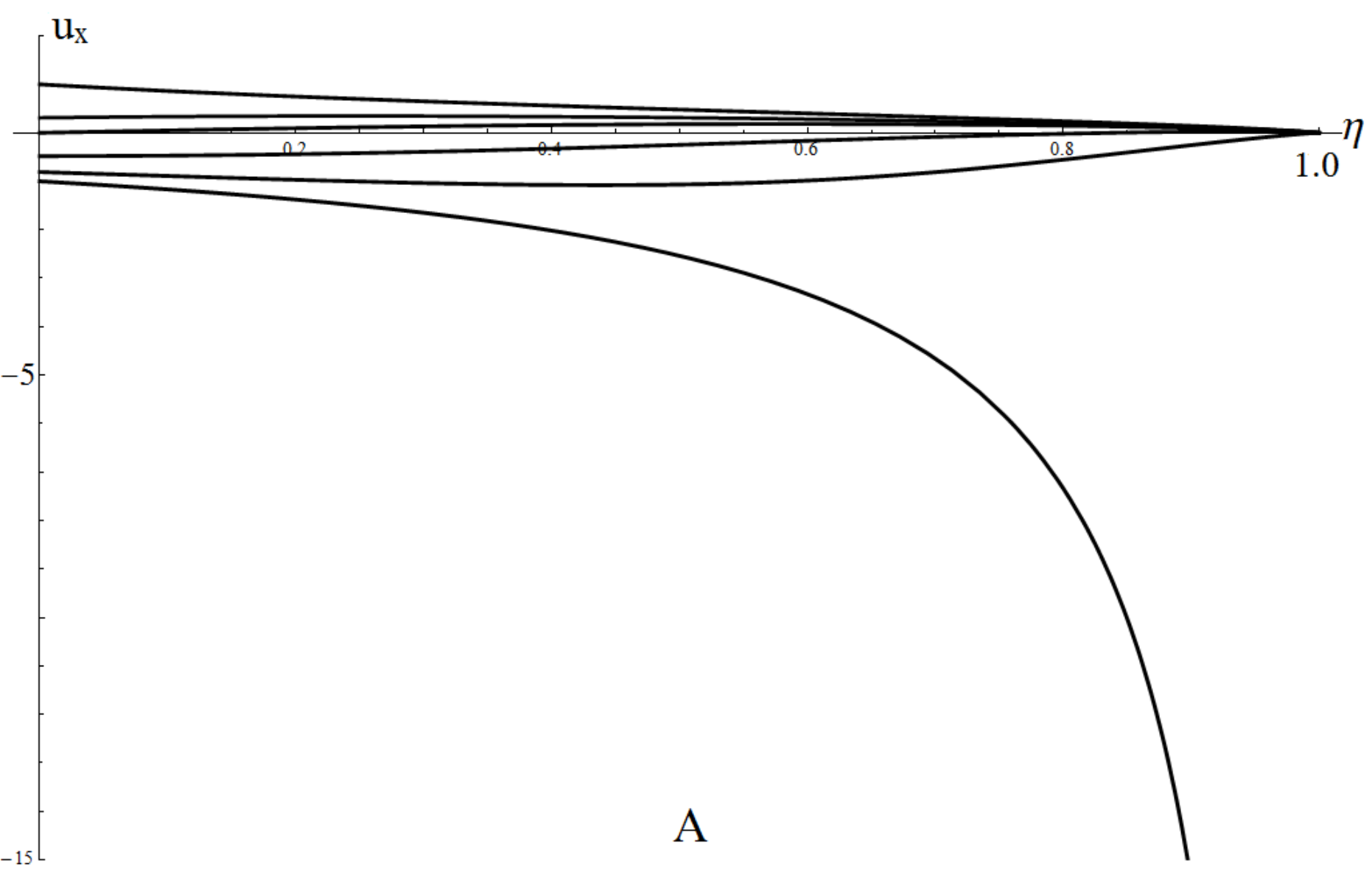} 
\includegraphics[scale=0.34]{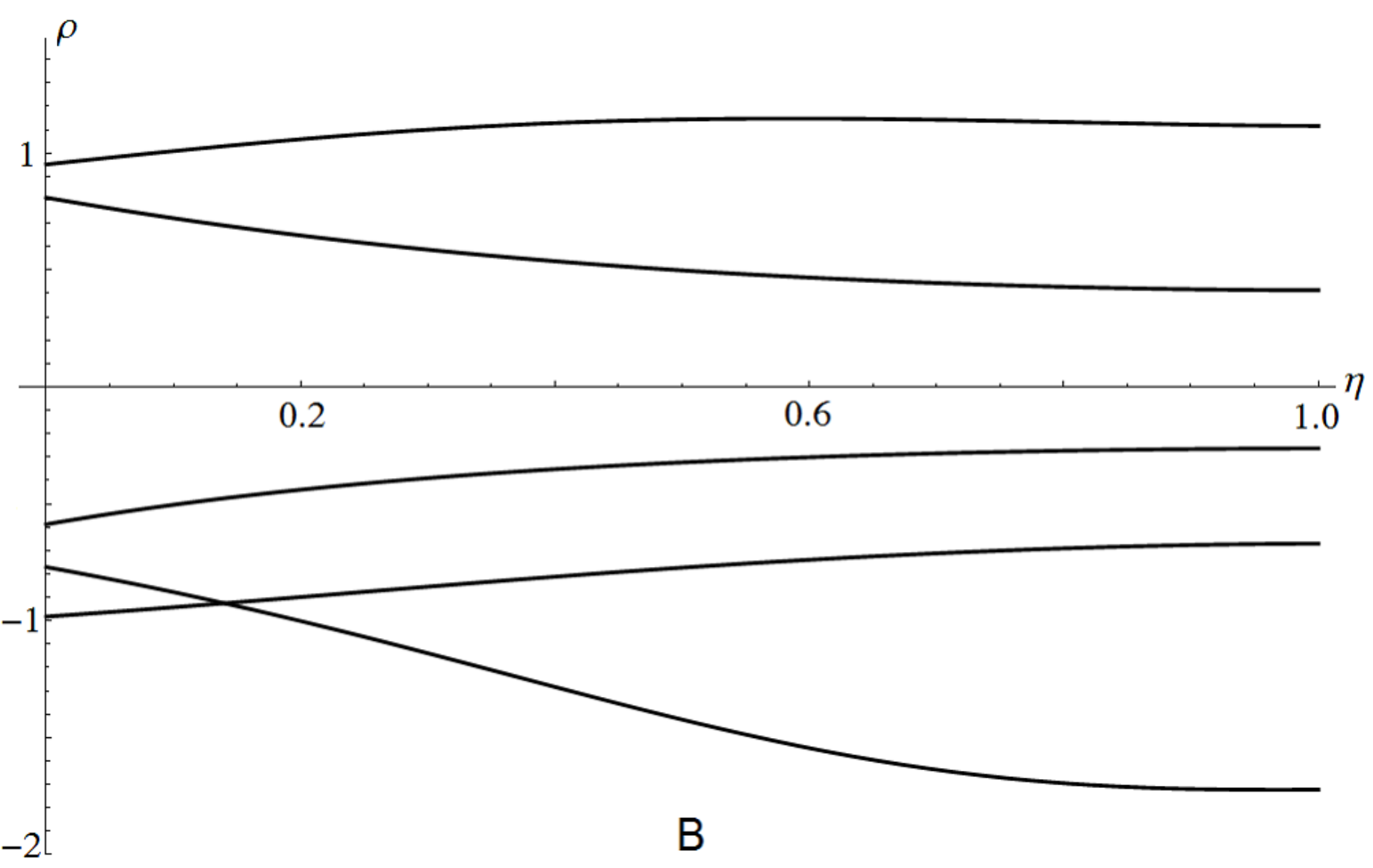} 
\caption{For Example 2 with $(\lambda,\kappa)=(-1,1)$, Figure A depicts blow-up of $u_x(\gamma(1/2,t),t)$ to $-\infty$ as $t\uparrow t_*\sim0.86$. Figure B shows a bounded $\rho\circ\gamma$ for several choices of $\alpha$ and $0\leq t\leq t_*$.}
\label{fig:ex2}
\end{figure}
\end{center}

\subsection{Example 3}

For $(\lambda,\kappa)=(1,1)$, take $u_0'(\alpha)=\cos(2\pi\alpha)$ and $\rho_0(\alpha)\equiv1$, which corresponds to Theorem \ref{thm:singular}. Notice that $C(\alpha)=0$ gives $\Omega=\{0,1/2,1\}$ so that $\max_{\Omega}\{2\lambda u_0'\}=2$ occurs at both end-points $\alpha=0,1$. This implies that (\ref{eq:etaomega}) could be given by $1/2$. Now, $\rho_0$ never vanishes, so no double roots; however, since $g_1(\alpha)=\cos(2\pi\alpha)+1$, then $g_1$ attains its maximum value at $\alpha=0,1$, both of which lie in $\Omega$, thus, $N$ does not exist. We conclude that $\eta_*=1/2$ and $\overline\alpha=0,1$. Evaluating the integrals yield representations in terms of Elliptic integrals,
$$\mathcal{\bar{P}}_0(t)=\frac{2\,\text{EllipticK}\left[\frac{4\eta(t)^2}{4\eta(t)^2-1}\right]}{\pi\,\sqrt{1-4\eta(t)^2}}.$$
For practical purposes, we omit the formula for (\ref{eq:integrals})ii). Using (\ref{eq:ux}) and (\ref{eq:rho}), we find that $u_x(\gamma(\overline\alpha,t),t)\to+\infty$ as $t\uparrow t_*\sim0.4$, and diverges to $+\infty$ otherwise. Moreover, $\rho(\gamma(\overline\alpha,t),t)\to+\infty$ but vanishes, as $t\uparrow t_*$, for $\alpha\neq\overline\alpha$. See Figure \ref{fig:ex3} below.

\begin{center}
\begin{figure}[!ht]
\includegraphics[scale=0.31]{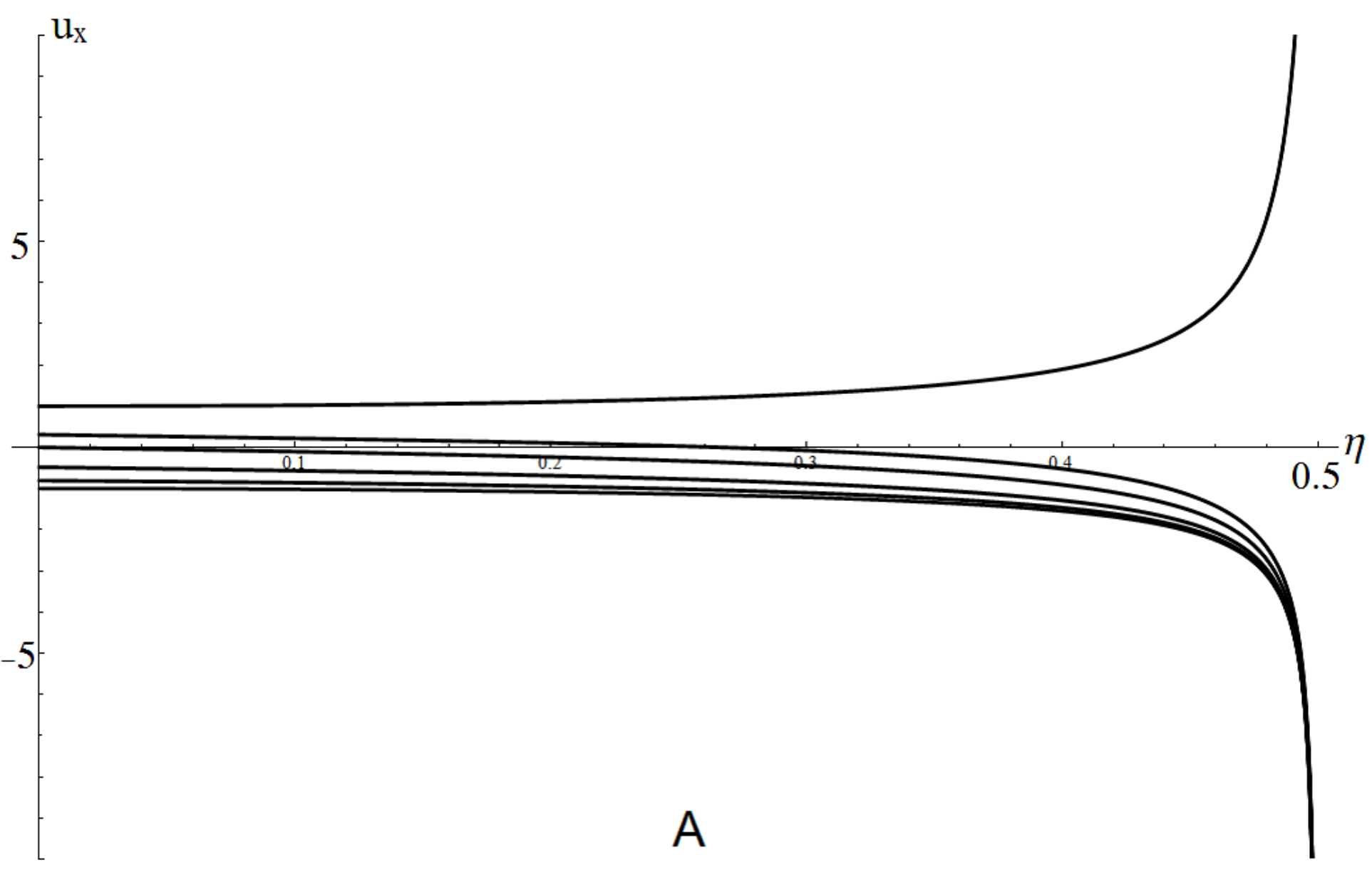} 
\includegraphics[scale=0.32]{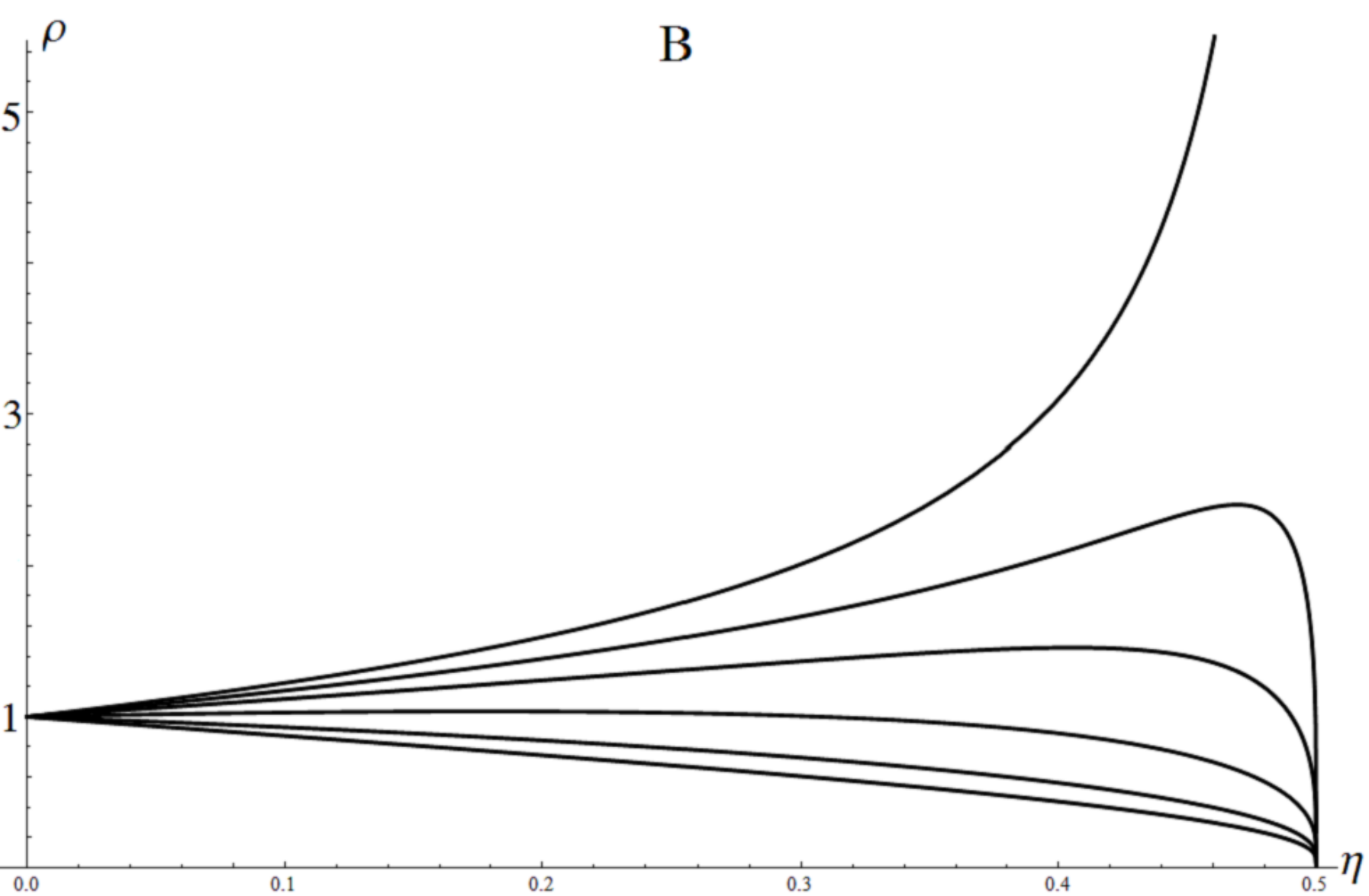} 
\caption{In Example 3 for $(\lambda,\kappa)=(1,1)$, Figure A shows $u_x\circ\gamma$ undergoing a two-sided, everywhere blow-up as $t\uparrow t_*\sim0.4$, whereas, in Figure B, $\rho\circ\gamma$ diverges, to $+\infty$, only at two locations in the domain and vanishes everywhere else.}
\label{fig:ex3}
\end{figure}
\end{center}

\subsection{Example 4}

For $(\lambda,\kappa)=(-1/2,1)$ take $u_0'(\alpha)=1-2\alpha$ and $\rho_0(\alpha)\equiv1$. In this case the discriminant (\ref{eq:D}) of the quadratic is always negative and so $\mathcal{Q}$ remains positive and finite for all $0\leq\eta<+\infty$. We are then interested in the behaviour of solutions as $\eta\to+\infty$. Evaluating the integrals in (\ref{eq:integrals}) yields
$$\mathcal{\bar{P}}_0(t)=1+\frac{7\eta(t)^2}{12},\,\,\,\,\,\,\,\,\,\,\,\,\,\,\,\,\,\,\,\,\,\int_0^1{\frac{\lambda u_0'(\alpha)-\eta(t)C(\alpha)}{\mathcal{Q}(\alpha,t)^{1+\frac{1}{2\lambda}}}d\alpha}=-\frac{7\eta(t)}{12}.$$
Then using the above to solve (\ref{eq:etaivp}) gives
$$t(\eta)=\sqrt{\frac{12}{7}}\,\text{arctan}\left(\sqrt{\frac{7}{12}}\,\eta\right),$$
so that $t_{\infty}\equiv\lim_{\eta\to+\infty}t(\eta)=\frac{\pi}{2}\sqrt{\frac{12}{7}}\sim2.06$ represents the finite-time it take solutions to reach steady states. The solution components are computed as
$$u_x\circ\gamma=\frac{12-24\alpha+4\eta(t)(1+6\alpha(\alpha-1))+7(2\alpha-1)\eta(t)^2}{3(4+\eta(t)(4+3\eta(t)+4\alpha(\eta(t)(\alpha-1)-2)))}$$
and 
$$\rho\circ\gamma=\frac{12+7\eta(t)^2}{3(4+\eta(t)(4+3\eta(t)+4\alpha(\eta(t)(\alpha-1)-2)))}.$$
We note that we are also able to obtain the inverse jacobian function in closed-form, which is then used to plot the solution in Eulerian coordinates. Lastly, the steady states, $u_x^{\infty}$ and $\rho^{\infty}$, are given by
$$\rho^{\infty}=\frac{7}{3(4\alpha^2-4\alpha+3)},\,\,\,\,\,\,\,\,\,\,\,\,\,\,\,\,\,\,\,\,\,\,\,\,\,\,\,\,u_x^{\infty}=-u_0'(\alpha)\rho^{\infty}.$$
Additionally, and for the sake of comparison, below we plot (\ref{eq:ux}) and (\ref{eq:rho}) for the same initial data but $(\lambda,\kappa)=(1/2,-1)$. The steady states in that case are
$$\rho^{\infty}=\frac{\sqrt{2}}{(4\alpha^2-4\alpha+3)\,\text{arcCot}\sqrt{2}},\,\,\,\,\,\,\,\,\,\,\,\,\,\,\,\,\,\,\,\,\,\,\,u_x^{\infty}=-u_0'(\alpha)\rho^{\infty},$$
which are reached at, approximately, $t_{\infty}\sim2.22$. It is simple to check that both steady states coincide with the formulae in Theorem \ref{thm:global1ak<0}. See Figures \ref{fig:ex4} and \ref{fig:ex41} below.

\begin{center}
\begin{figure}[!ht]
\includegraphics[scale=0.35]{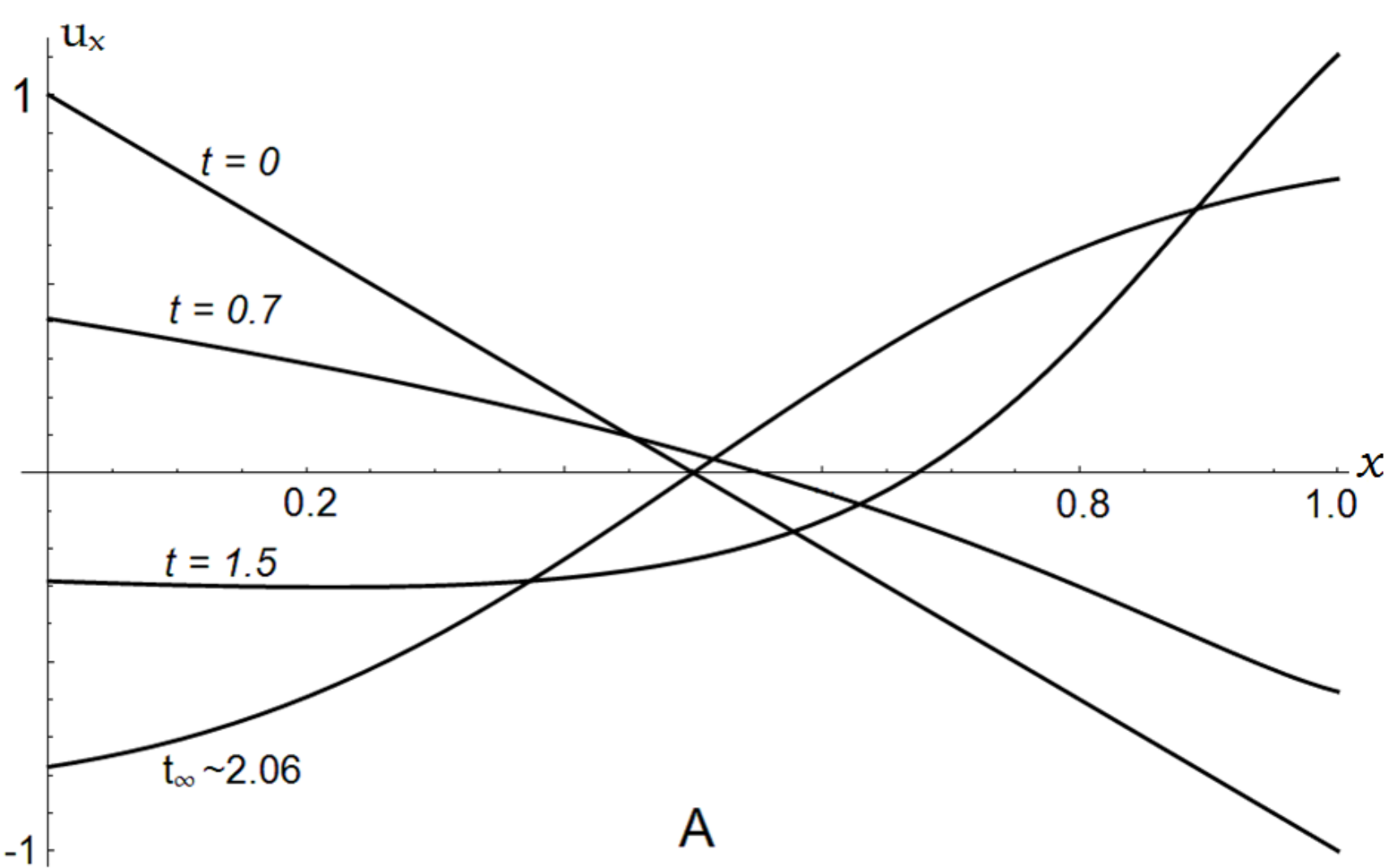} 
\includegraphics[scale=0.35]{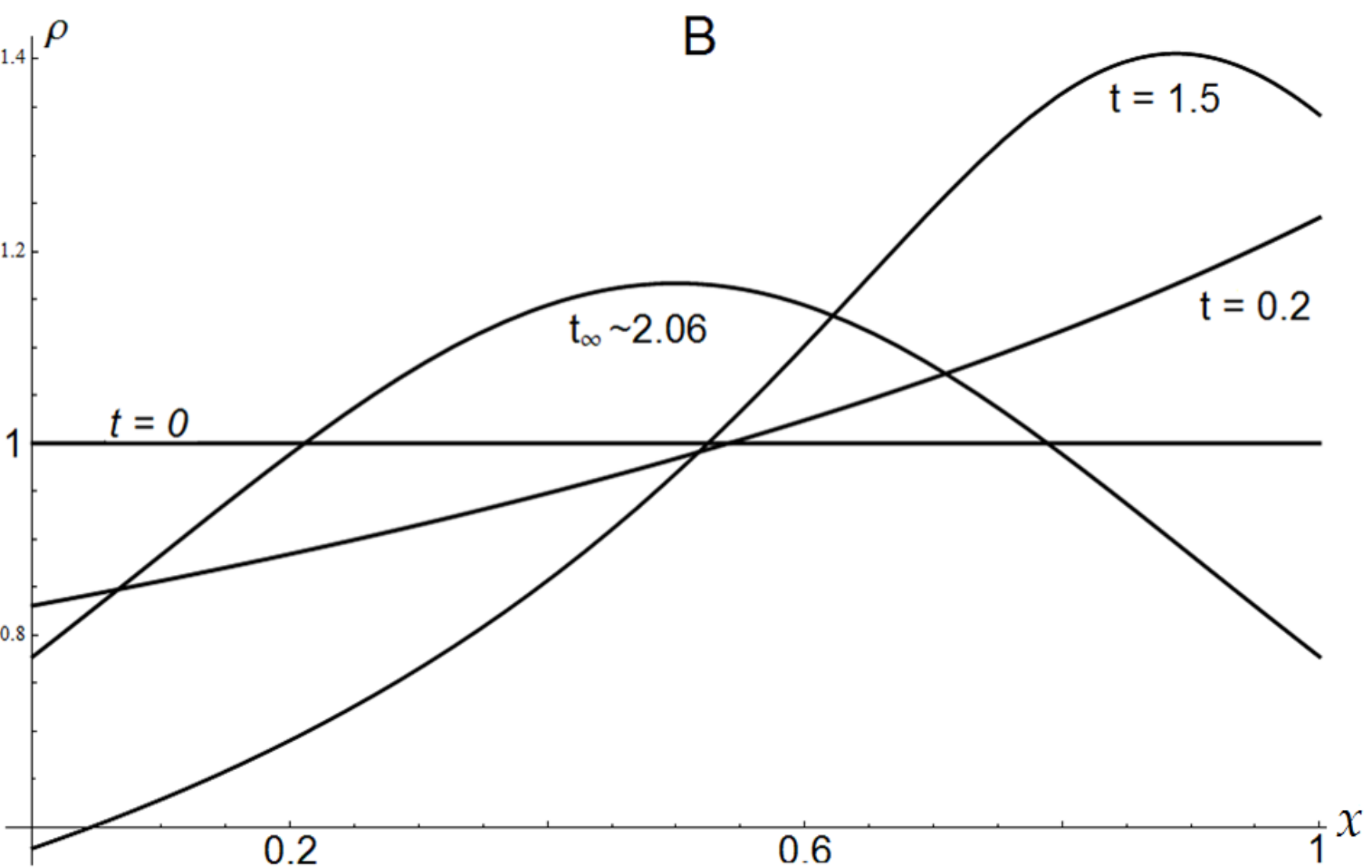} 
\caption{For Example 4 with $(\lambda,\kappa)=(-1/2,1)$, convergence of $u_x(x,t)$ and $\rho(x,t)$ to steady states as $\eta\to+\infty$ ($t\uparrow t_{\infty}\sim2.06$).}
\label{fig:ex4}
\end{figure}
\end{center}

\begin{center}
\begin{figure}[!ht]
\includegraphics[scale=0.32]{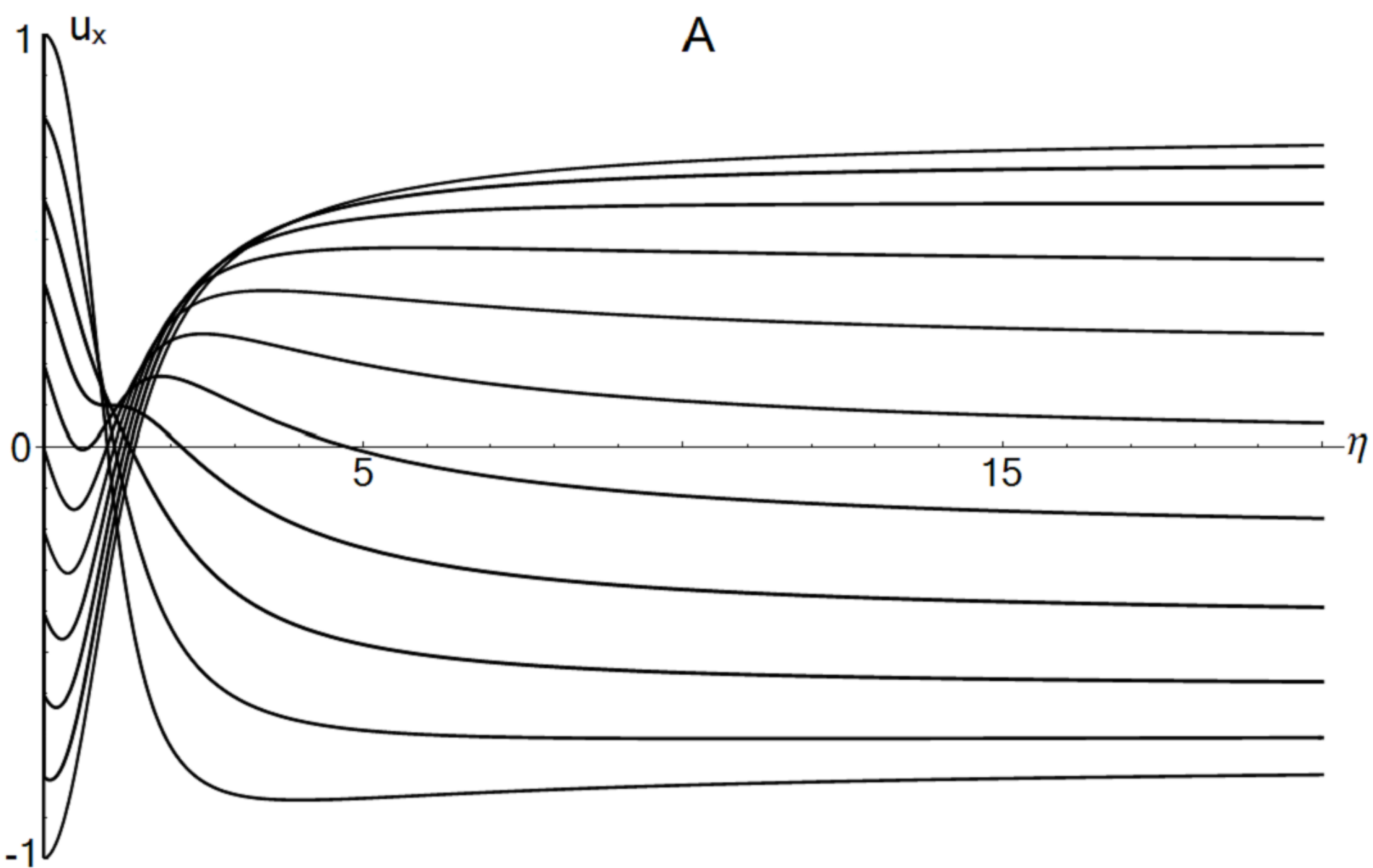} 
\includegraphics[scale=0.32]{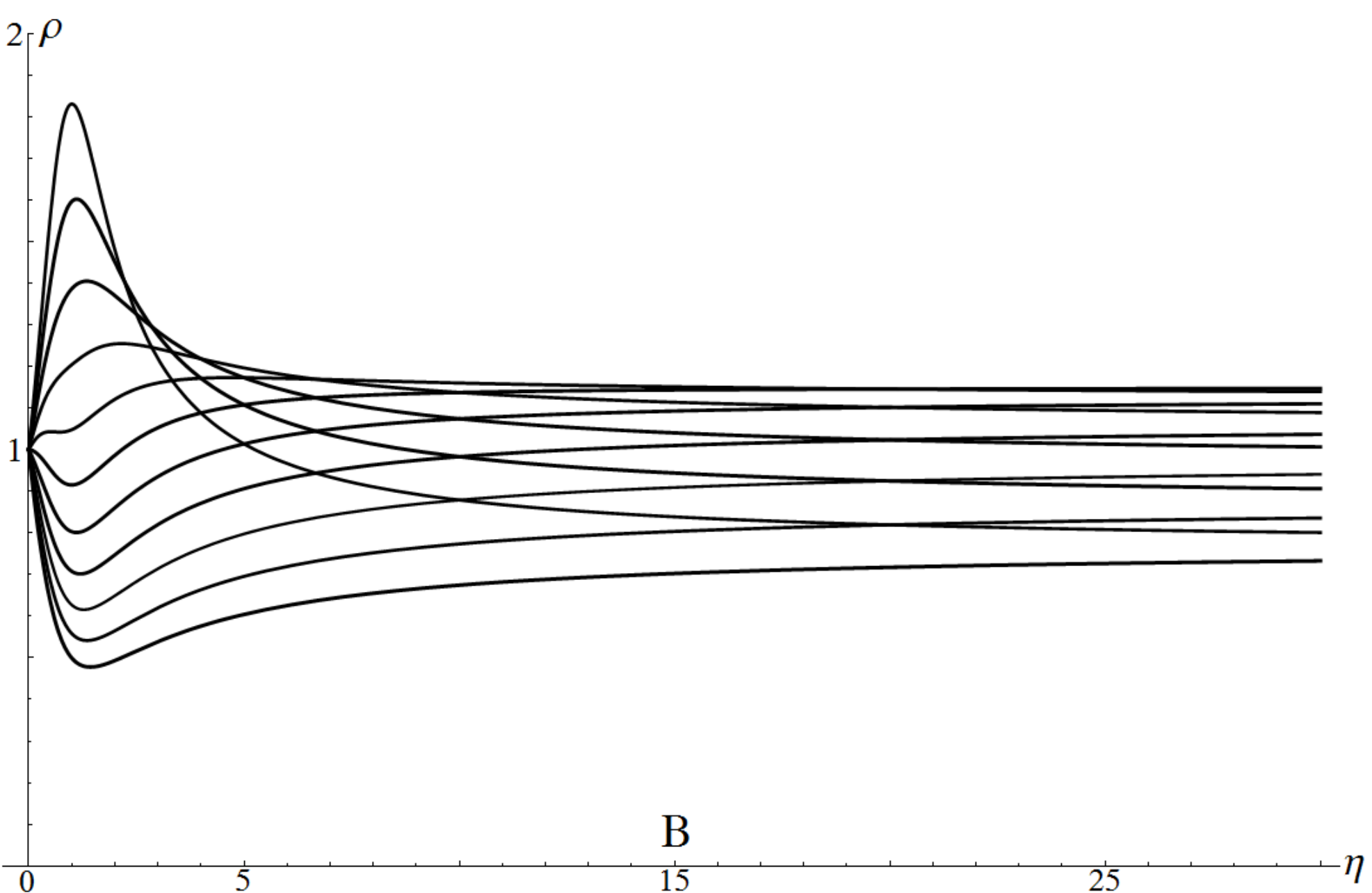} 
\caption{In Example 4 for $(\lambda,\kappa)=(1/2,-1)$, convergence of $u_x\circ\gamma$ and $\rho\circ\gamma$ to steady states as $\eta\to+\infty$ ($t\uparrow t_{\infty}\sim2.22$).}
\label{fig:ex41}
\end{figure}
\end{center}


\appendix

\section{Some Simple Cases}\hfill
\label{sec:simplecases}

In this section, we consider some special and trivial cases not studied in the paper.

\subsection{Case $\kappa=0$ and $C(\alpha)\not\equiv0$}
\label{subsec:k=0}

For $\kappa=0$, (\ref{eq:hs})i) reduces to the giPJ equation, for which a general solution was derived, and analyzed extensively, in \cite{Sarria1} and  \cite{Sarria2}. The reader may also refer to \cite{Okamoto1}, \cite{Okamoto2}, \cite{Aconstantin1}, \cite{Saxton1}, \cite{Wunsch1}, \cite{Wunsch2} and \cite{Wunsch3} for additional results. Excluding the trivial case $\rho_0\equiv0$, regularity of $\rho$ is easily studied in this case by using (\ref{eq:rho0}) and the estimates established in \cite{Sarria1} and \cite{Sarria2} for the corresponding jacobian function.

\subsection{Case $\kappa=0$ and $C(\alpha)\equiv0$}
\label{subsec:k=0,c=0}

Suppose $\kappa=0$ and $C(\alpha)\equiv0$ for $C(\alpha)$ as in (\ref{eq:c0}). Assuming $\lambda\neq0$, this implies that $u_0'(\alpha)\equiv0$. Then the formulas in \S\ref{sec:solution} yield $(u_x,\rho)\circ\gamma(\alpha,t)=(0,\rho_0(\alpha)).$

\subsection{Case $\kappa\neq0$ and $C(\alpha)\equiv0$}
\label{subsec:c=0}

For $(\lambda,\kappa)\neq(0,0)$, suppose $C(\alpha)\equiv0$. Then for $\lambda\kappa>0$, the initial data $(u_0',\rho_0)$ will be restricted by the condition
\begin{equation}
\label{eq:c=0}
u_0'(\alpha)^2=\frac{\kappa}{\lambda}\rho_0(\alpha)^2,\,\,\,\,\,\,\,\,\,\,\,\,\,\,\,\,\alpha\in[0,1].
\end{equation}
Moreover, setting $C(\alpha)\equiv0$ into (\ref{eq:p})-(\ref{eq:etaivp}) leads to representation formulae derived in \cite{Sarria1}\footnote[10]{See equation (3.19) in \cite{Sarria1}.}. If instead $\lambda\kappa<0$, then (\ref{eq:c=0}) implies that $(u_0'(\alpha),\rho_0(\alpha))\equiv(0,0)$, the trivial solution $(u_x,\rho)=(0,0)$.

\subsection{Case $\lambda=0$}
\label{subsec:lambda}
For smooth enough initial data, this case may be treated by following the argument in Appendix A of \cite{Sarria1}, and leads to global-in-time solutions.

\section{Further Integral Estimates}\hfill
\label{sec:furtherintegrals}

In this section, we give a brief outline of the method used to obtain several of the integral estimates used throughout the paper for (\ref{eq:integrals})i), particularly those leading to a convergent integral as $\eta\uparrow\eta_*$ for certain values of $\lambda$. First we need some auxiliary results. Recall the Gauss hypergeometric series (\cite{Barnes1}, \cite{Magnus1}, \cite{Gasper1})
\begin{equation}
\label{eq:2f1}
\begin{split}
{}_2F_1\left[a,b;c;z\right]\equiv\sum_{k=0}^{\infty}\frac{\left(a\right)_k(b)_k}{\left(c\right)_k\,k!}z^k,\,\,\,\,\,\,\,\,\,\,\,\,\,\,\,\lvert z\rvert< 1
\end{split}
\end{equation} 
defined for $c\notin\mathbb{Z}^-\cup\{0\}$ and $(x)_k,\, k\in\mathbb{N}\cup\{0\}$, the Pochhammer symbol $(x)_0=1$, $(x)_k=x(x+1)...(x+k-1).$ Then consider the following results (\cite{Magnus1}, \cite{Gasper1}):
\begin{proposition}
\label{lem:analcont}
Suppose $\lvert\text{arg}\left(-z\right)\rvert<\pi$ and $a,b,c,a-b\notin\mathbb{Z}.$ Then, the analytic continuation for $\lvert z\rvert>1$ of the series (\ref{eq:2f1}) is given by 
\begin{equation}
\label{eq:analform}
\begin{split}
{}_2F_1[a,b;c;z]=&\frac{\Gamma(c)\Gamma(a-b)(-z)^{-b}{}_2F_1[b,1+b-c;1+b-a;z^{-1}]}{\Gamma(a)\Gamma(c-b)}
\\
&+\frac{\Gamma(c)\Gamma(b-a)(-z)^{-a}{}_2F_1[a,1+a-c;1+a-b;z^{-1}]}{\Gamma(b)\Gamma(c-a)}
\end{split}
\end{equation} 
where $\Gamma(\cdot)$ denotes the standard gamma function.
\end{proposition}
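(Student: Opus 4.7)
The plan is to establish (\ref{eq:analform}) via the Barnes--Mellin contour representation
\begin{equation*}
{}_2F_1[a,b;c;z] = \frac{\Gamma(c)}{2\pi i\,\Gamma(a)\Gamma(b)} \int_{L} \frac{\Gamma(a+s)\Gamma(b+s)\Gamma(-s)}{\Gamma(c+s)}\,(-z)^s\,ds,
\end{equation*}
where $L$ is a vertical contour in the complex $s$-plane separating the right poles of $\Gamma(a+s)\Gamma(b+s)$ at $s=-a-k,-b-k$ from the left poles of $\Gamma(-s)$ at $s=k$, for $k\in\mathbb{N}\cup\{0\}$. The first step is to verify this representation for $|z|<1$ by closing $L$ to the left: Stirling's formula controls the integrand on large arcs while the restriction $|\arg(-z)|<\pi$ keeps $|(-z)^s|$ bounded there, and the residues at the simple poles $s=k$ sum to precisely the defining series (\ref{eq:2f1}).

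For the regime $|z|>1$ relevant to (\ref{eq:analform}), I would instead close $L$ to the right. The hypothesis $a,b,c,a-b\notin\mathbb{Z}$ guarantees that the poles $s=-a-k$ and $s=-b-k$ are all simple and pairwise disjoint, so each contributes a single residue. Using that $\Gamma(a+s)$ has residue $(-1)^k/k!$ at $s=-a-k$, together with the reflection $\Gamma(x-k)=(-1)^k\Gamma(x)/(1-x)_k$ applied to $\Gamma(b-a-k)$ and $\Gamma(c-a-k)$, a direct manipulation rearranges the $k$-th residue into
\begin{equation*}
\frac{\Gamma(c)\Gamma(b-a)}{\Gamma(b)\Gamma(c-a)}\,\frac{(a)_k\,(a-c+1)_k}{(a-b+1)_k\,k!}\,(-z)^{-a}\,z^{-k}.
\end{equation*}
Summing over $k\geq 0$ produces the second line of (\ref{eq:analform}), and the residues at $s=-b-k$ produce the first line by the symmetric computation $a\leftrightarrow b$.

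The main technical obstacle is justifying the contour closure itself when $|z|>1$. Stirling's formula yields $|\Gamma(a+s)\Gamma(b+s)/\Gamma(c+s)|\lesssim |s|^{\Re(a+b-c)-1}$ on any vertical line, with an extra exponential factor $e^{-\pi|\Im s|}$ coming from $\Gamma(-s)$, while $|(-z)^s|\sim |z|^{\Re s}\,e^{-\arg(-z)\,\Im s}$. The delicate step is to verify, using a sequence of rectangular contours whose right edge $\Re s=R_n\to\infty$ is chosen to avoid the pole lattices, that the contributions from the horizontal and right edges vanish in the limit; this is where the restriction $|\arg(-z)|<\pi$ is essential, as it controls the imaginary growth of $(-z)^s$. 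Once the contour manipulation is justified on a generic open set of parameters, both sides of (\ref{eq:analform}) are meromorphic in $(a,b,c)$, and analytic continuation from that generic configuration extends the identity to the full range permitted by the hypothesis $a,b,c,a-b\notin\mathbb{Z}$.
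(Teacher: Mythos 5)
The paper does not prove this proposition at all: it is quoted verbatim as a classical result with citations to Erd\'elyi et al.\ \cite{Magnus1} and Gasper--Rahman \cite{Gasper1}, so your proposal is supplying an argument where the paper supplies a reference. What you outline is in fact the standard textbook proof of the connection formula (essentially Whittaker--Watson / Erd\'elyi \S 2.1.4): start from the Barnes--Mellin representation, identify the two lattices of poles of $\Gamma(a+s)\Gamma(b+s)$ and the lattice of poles of $\Gamma(-s)$, and observe that the hypothesis $a-b\notin\mathbb{Z}$ keeps the former simple and disjoint. Your residue at $s=-a-k$ is computed correctly (the reflection identity $\Gamma(x-k)=(-1)^k\Gamma(x)/(1-x)_k$ and the evaluation $\mathrm{Res}_{s=-a-k}\Gamma(a+s)=(-1)^k/k!$ do combine to give $\tfrac{\Gamma(c)\Gamma(b-a)}{\Gamma(b)\Gamma(c-a)}\tfrac{(a)_k(a-c+1)_k}{(a-b+1)_k\,k!}(-z)^{-a}z^{-k}$), and summing these together with the $a\leftrightarrow b$ family does reproduce \eqref{eq:analform}. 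Your identification of the delicate point --- justifying the contour closure along a pole-avoiding sequence of contours, with $\lvert\arg(-z)\rvert<\pi$ controlling the $e^{-\arg(-z)\Im s}$ growth against the $e^{-\pi\lvert\Im s\rvert}$ decay from $\Gamma(-s)$ --- is also the right one. The one error to fix is an orientation swap that runs through the whole write-up: with the standard convention that $L$ separates the poles of $\Gamma(a+s)\Gamma(b+s)$ (at $s=-a-k,-b-k$, which march off to the \emph{left}) from those of $\Gamma(-s)$ (at $s=k\geq0$, on the \emph{right}), one recovers the defining series for $\lvert z\rvert<1$ by closing to the \emph{right}, and obtains the continuation for $\lvert z\rvert>1$ by closing to the \emph{left}, so that the factor $\lvert z\rvert^{\Re s}$ decays as $\Re s\to-\infty$; you have stated both closures backwards, and your "right edge $\Re s=R_n\to\infty$" should be a left edge $\Re s=-R_n$. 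This is bookkeeping rather than a conceptual gap, since you attach the correct series to the correct pole lattice, but it would need correcting in any written version. The final appeal to meromorphy in $(a,b,c)$ is harmless but not needed under the stated hypotheses, since all relevant poles are already simple.
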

\begin{lemma}
\label{prop:prop}
\cite{Sarria1}\,
Suppose $b\in(-\infty,2)\backslash\{1/2\},\,\,0\leq\left|\beta-\beta_0\right|\leq1$ and $\epsilon\geq C_0$ for some $C_0>0.$ Then 
\begin{equation}
\label{eq:derseries}
\begin{split}
\frac{1}{\epsilon^b}\,\frac{d}{d\beta}\left((\beta-\beta_0){}_2F_1\left[\frac{1}{2},b;\frac{3}{2};-\frac{C_0(\beta-\beta_0)^2}{\epsilon}\right]\right)=(\epsilon+C_0(\beta-\beta_0)^2)^{-b}.
\end{split}
\end{equation} 
\end{lemma}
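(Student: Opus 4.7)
The plan is to verify the identity directly from the hypergeometric series by exploiting a one-line cancellation between two Pochhammer quotients and then recognizing what remains as a generalized binomial series. Set $x=\beta-\beta_0$ and $z=-C_0 x^2/\epsilon$; under the hypotheses $\epsilon\geq C_0$ and $|x|\leq 1$ one has $|z|\leq 1$, with strict inequality in the interior, so the series (\ref{eq:2f1}) defining ${}_2F_1[\tfrac12,b;\tfrac32;z]$ converges absolutely and uniformly on compact subsets of $\{|z|<1\}$ and admits termwise differentiation. Using the series and multiplying by $x$,
\begin{equation*}
x\cdot{}_2F_1\!\left[\tfrac{1}{2},b;\tfrac{3}{2};z\right]=\sum_{k=0}^{\infty}\frac{(1/2)_k\,(b)_k}{(3/2)_k\,k!}\left(-\frac{C_0}{\epsilon}\right)^{\!k}x^{2k+1},
\end{equation*}
and differentiating in $\beta$ (equivalently, in $x$) produces the same series with an extra factor of $(2k+1)$.

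The crucial observation is the Pochhammer identity
\begin{equation*}
\frac{(1/2)_k}{(3/2)_k}=\frac{(1/2)(3/2)\cdots((2k-1)/2)}{(3/2)(5/2)\cdots((2k+1)/2)}=\frac{1/2}{(2k+1)/2}=\frac{1}{2k+1},
\end{equation*}
so that the numerator factor $(2k+1)$ cancels exactly against $(3/2)_k/(1/2)_k$ term by term. The differentiated series therefore collapses to
\begin{equation*}
\frac{d}{d\beta}\!\left(x\cdot{}_2F_1\!\left[\tfrac{1}{2},b;\tfrac{3}{2};z\right]\right)=\sum_{k=0}^{\infty}\frac{(b)_k}{k!}\!\left(-\frac{C_0 x^2}{\epsilon}\right)^{\!k},
\end{equation*}
which is the generalized binomial series for $(1-w)^{-b}$ at $w=-C_0 x^2/\epsilon$, valid whenever $|w|<1$. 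Summing and factoring gives
\begin{equation*}
\left(1+\frac{C_0 x^2}{\epsilon}\right)^{-b}=\epsilon^{b}(\epsilon+C_0 x^2)^{-b},
\end{equation*}
and dividing by $\epsilon^b$ yields exactly (\ref{eq:derseries}).

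The only nontrivial point, and the one that accounts for the technical restrictions on $b$, is the behaviour at the boundary case $\epsilon=C_0$ with $|\beta-\beta_0|=1$, where $|z|=1$ and the bare series diverges in general. Here the restriction $b\in(-\infty,2)\setminus\{1/2\}$ is used together with Proposition \ref{lem:analcont} to analytically continue ${}_2F_1[\tfrac12,b;\tfrac32;z]$ up to the closed unit disk minus the branch cut; once both sides of the identity are seen to be continuous there, the equality obtained in the interior propagates by continuity. The exclusion $b=1/2$ avoids the degenerate case $a-b=0$ in the parameter configuration required by Proposition \ref{lem:analcont}, while $b<2$ is precisely what ensures the boundary values on both sides are finite. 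Aside from this routine analytic-continuation check, the identity is a one-line consequence of the telescoping $(2k+1)(1/2)_k/(3/2)_k=1$ and the binomial theorem.
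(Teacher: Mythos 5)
Your verification is correct: the telescoping identity $(2k+1)\,(1/2)_k/(3/2)_k=1$ does collapse the termwise-differentiated series to the binomial series $\sum_k (b)_k w^k/k!=(1-w)^{-b}$ with $w=-C_0(\beta-\beta_0)^2/\epsilon$, and the hypothesis $\epsilon\geq C_0$ with $|\beta-\beta_0|\leq1$ keeps $|w|\leq1$ so that everything converges (the boundary case $|w|=1$ being handled by continuity, with $b<2$ exactly the condition $\Re(c-a-b)>-1$ for convergence of the ${}_2F_1$ series at $z=-1$, and $b=1/2$ excluded only so that Proposition \ref{lem:analcont} remains applicable downstream). The paper itself does not prove this lemma --- it is imported from \cite{Sarria1} without proof --- so there is no in-text argument to compare against; your derivation is the natural direct one and is sound.
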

Below we give a simple example on how to use the above results to estimate the behaviour, as $\eta\uparrow\eta_*$, of the integral
$$\int_0^1{\frac{d\alpha}{(1-\lambda\eta(t)u_0'(\alpha))^{\frac{1}{\lambda}}}}$$
for $\lambda\in(1,+\infty)\backslash\{2\}$ and
\begin{equation}
\label{eq:etaah}
\eta_*=\frac{1}{\lambda M_0}
\end{equation} 
where $M_0>0$ denotes the largest value attained by $u_0'(\alpha)$ at finitely many points $\overline\alpha\in[0,1]$. For $\epsilon>0$ small, we start with the approximation
\begin{equation}
\label{eq:exp}
\begin{split}
\epsilon-u_0^\prime(\alpha)+M_0\sim\epsilon-C_1(\alpha-\overline\alpha)^2,
\end{split}
\end{equation}
which originates from a Taylor expansion, with non-vanishing quadratic coefficient, of $u_0'$ about $\overline\alpha$. Now, suppose $\lambda\in(1,+\infty)\backslash\{2\}$ and set $b=\frac{1}{\lambda}$ in Lemma \ref{prop:prop} to obtain
\begin{equation}
\label{eq:gen2f12}
\begin{split}
\int_{\overline\alpha-s}^{\overline\alpha+s}{\frac{d\alpha}{(\epsilon-C_1(\alpha-\overline\alpha)^2)^{\frac{1}{\lambda}}}}=2s\epsilon^{-\frac{1}{\lambda}}\,{}_2F_1\left[\frac{1}{2},\frac{1}{\lambda};\frac{3}{2};\frac{s^2C_1}{\epsilon}\right],
\end{split} 
\end{equation}
where the above series is defined by (\ref{eq:2f1}) as long as $\epsilon\geq-C_1\geq-s^2C_1>0$, namely $-1\leq\frac{s^2C_1}{\epsilon}<0$. However, ultimately, we are interested in the behaviour of (\ref{eq:gen2f12}) for $\epsilon>0$ arbitrarily small, so that, eventually, $\frac{s^2C_1}{\epsilon}<-1$. To achieve the transition of the series' argument across $-1$ in a well-defined, continuous fashion, we use proposition \ref{lem:analcont} which provides us with the analytic continuation of (\ref{eq:gen2f12}) from argument values inside the unit circle, particularly on the interval  $-1\leq\frac{s^2C_1}{\epsilon}<0$, to those found outside and thus for $\frac{s^2C_1}{\epsilon}<-1$. Consequently, for $\epsilon$ small enough, so that $-s^2C_1>\epsilon>0$, proposition \ref{lem:analcont} implies
\begin{equation}
\label{eq:gen2f122}
\begin{split}
2s\epsilon^{-\frac{1}{\lambda}}\,{}_2F_1\left[\frac{1}{2},\frac{1}{\lambda};\frac{3}{2};\frac{s^2C_1}{\epsilon}\right]= C\,\Gamma\left(\frac{1}{\lambda}-\frac{1}{2}\right)\epsilon^{\frac{1}{2}-\frac{1}{\lambda}}+\frac{C}{\lambda-2}+\psi(\epsilon)
\end{split}
\end{equation}
for $\psi(\epsilon)=\textsl{o}(1)$ as $\epsilon\to0$ and $C\in\mathbb{R}^+$ which may depend on $\lambda$ and can be obtained explicitly from (\ref{eq:analform}). Then, substituting $\epsilon=\frac{1}{\lambda\eta}-M_0$ into (\ref{eq:gen2f122}) and using (\ref{eq:exp}) along with (\ref{eq:gen2f12}), yields
\begin{equation}
\label{eq:alphaposgen6}
\begin{split}
\int_0^1{\frac{d\alpha}{(1-\lambda\eta(t)u_0'(\alpha))^{\frac{1}{\lambda}}}}\sim
\begin{cases}
C(1-\lambda\eta(t)M_0)^{\frac{1}{2}-\frac{1}{\lambda}},\,\,\,\,\,\,\,\,\,\,\,&\lambda\in(1,2),
\\
C,\,\,&\lambda\in(2,+\infty)
\end{cases}
\end{split}
\end{equation}
for $\eta_*-\eta>0$ small. We remark that the above blow-up rate for $\lambda\in(1,2)$ could also have been obtained for the whole interval $\lambda\in(0,2)$ via the simpler method used in the proofs of most Theorems in this article.

\section{Roots with double multiplicity}\hfill
\label{sec:doublemult}

In this section, we present blow-up and global existence results for solutions to (\ref{eq:hs}) in the case where the quadratic (\ref{eq:Q}) vanishes earliest at $\eta_*$ having double multiplicity. As shown in \S\ref{subsec:notation}, in such case the asymptotic analysis of the space-dependent terms in (\ref{eq:ux}) and (\ref{eq:rho}), as well as that of the integrals (\ref{eq:integrals}), follows that of \cite{Sarria1} and \cite{Sarria2}. Consequently, we direct the reader to those works for specific details on the corresponding estimates.

Suppose the initial data is such that $\mathcal{Q}$ vanishes earliest at 
$$\eta_*=\frac{1}{N_1}$$
where
\begin{equation}
\label{eq:N2}
N_1\equiv\max_{\alpha\in\Sigma}\{\lambda u_0'(\alpha)\}=\lambda u_0'(\overline\alpha)
\end{equation}
is attained at finitely many points $\overline\alpha\in[0,1]$. In other words, we are assuming that the least, positive zero of the quadratic has multiplicity two (see (\ref{eq:fac2})). In turn, this means that the discriminant of $\mathcal{Q}$ satisfies $\mathcal{D}(\overline\alpha)=0$, namely $\rho_0(\overline\alpha)=0$. By comparing $N_1$ with the maximum value over $\Omega$ of $2\lambda u_0'$ or, if it exists, the maximum over $\Sigma$ of $g_1$ in (\ref{eq:g})i), both of which lead to single roots, we see that for most choices of smooth, non-trivial initial data, $\eta_*$ will be a zero of multiplicity one; however, double roots may occur in some trivial cases as well as for (non-)smooth data with particular growth conditions near locations where $\rho$ vanishes. For instance, as mentioned before, for the trivial case $\rho_0(\alpha)\equiv0$ (the giPJ equation case), $\eta_*$ is always a double root of $\mathcal{Q}$. For the non-trivial case, consider for instance piecewise continuous data
\begin{equation}
\label{eq:pieceu0}
u_0'(\alpha)\equiv
\begin{cases}
\frac{1}{2},\,\,\,\,\,\,\,\,\,\,\,\,&\alpha\in[0,1/4),
\\
1,\,\,\,&\alpha\in[1/4,3/4],
\\
-\frac{5}{2},\,\,\,\,\,\,&\alpha\in(3/4,1]
\end{cases}
\end{equation}
and
\begin{equation}
\label{eq:piecep0}
\rho_0(\alpha)\equiv
\begin{cases}
-\frac{1}{4},\,\,\,\,\,\,\,\,\,\,\,\,&\alpha\in[0,1/4),
\\
0,\,\,\,&\alpha\in[1/4,3/4],
\\
-\frac{1}{4},\,\,\,\,\,\,&\alpha\in(3/4,1].
\end{cases}
\end{equation}
In this case $\Omega=\emptyset$ and
\begin{equation}
\label{eq:pieceg1}
g_1(\alpha)=
\begin{cases}
\frac{3}{4},\,\,\,\,\,\,\,\,\,\,\,\,&\alpha\in[0,1/4),
\\
1,\,\,\,&\alpha\in[1/4,3/4],
\\
-\frac{9}{4},\,\,\,\,\,\,&\alpha\in(3/4,1].
\end{cases}
\end{equation}
Therefore, if $\alpha\in[1/4,3/4]$ we have that $\mathcal{Q}(\alpha,t)=(1-\eta(t))^2$, which vanishes as $\eta\uparrow\eta_*=1$, a root of double multiplicity. For $\alpha\in[0,1]\backslash[1/4,3/4]$, $\mathcal{Q}$ has single roots that are either negative, or larger than $\eta_*$. Notice that for the above non-smooth initial data formulas (\ref{eq:ux}) and (\ref{eq:rho}) are still defined. Although we are not concerned with non-smooth data in this work, the above may serve as a prototype on how double roots, with $\rho_0\not\equiv0$, can occur. We claim that for smooth data a single root is most common because, if we are to compare the greatest values of $g_1=\lambda u_0'+\sqrt{\lambda\kappa}\left|\rho_0\right|$ and $\lambda u_0'$ over $\Sigma$ for $\lambda\kappa>0$, then for the former to be less that than the latter, assumptions are needed on how steep $u_0'$ and $\left|\rho_0\right|$ decrease and respectively increase near zeroes of $\rho_0$ in $\Sigma$, and that is assuming $u_0'$ attains its greatest value there. If not, then the maximum of $g_1$ would be greater. Clearly, in the other case the maximum of $2\lambda u_0'$ is greater than that of $\lambda u_0$.

As opposed to Theorem \ref{thm:ak>0lambdanegkappapos}, which deals with an earliest root $\eta_*$ of single multiplicity, Corollary \ref{coro:special1} below considers a double multiplicity root for parameters $(\lambda,\kappa)\in\mathbb{R}^-\times\mathbb{R}^-$. We find that regularity results in both cases are rather similar, with the only difference being a scaling between the $\lambda$ values in both results. In contrast, Corollary \ref{coro:singular2} represents the case where, for $(\lambda,\kappa)\in\mathbb{R}^+\times\mathbb{R}^+$, $\mathcal{Q}$ has an earliest root of multiplicity two. As opposed to the single multiplicity case in Theorem \ref{thm:singular}, we find that double multiplicity in $\eta_*$ now allows for global solutions in certain ranges of the parameter $\lambda$.

\begin{corollary}
\label{coro:special1}
Consider the initial boundary value problem (\ref{eq:hs})-(\ref{eq:pbc}) for $(\lambda,\kappa)\in\mathbb{R}^-\times\mathbb{R}^-$. Suppose the initial data is such that (\ref{eq:Q}) vanishes earliest at $\eta_*=\frac{1}{\lambda m_0}$, a zero of double multiplicity, where $m_0<0$ represents the least value achieved by $u_0'$ in $[0,1]$ at finitely many points $\overline\alpha$. Then
\begin{enumerate}
\item\label{it:1ak<0special} For $(\lambda,\kappa)\in(-2,0)\times\mathbb{R}^-$, there is a finite $t_*>0$ for which $u_x(\gamma(\overline\alpha,t),t)$ diverges to minus infinity, as $t\uparrow t_*$, but remains bounded otherwise. Instead, if $(\lambda,\kappa)\in(-\infty,-2]\times\mathbb{R}^-$, $u_x(\gamma(\overline\alpha,t),t)\to-\infty$ as $t\uparrow t_*$ and blows up to positive infinity otherwise.

\item\label{it:ak>012special} For $(\lambda,\kappa)\in\mathbb{R}^-\times\mathbb{R}^-$, $\rho$ stays bounded for all $0\leq t\leq t_*$ and $\alpha\in[0,1]$. 
\end{enumerate}

\end{corollary}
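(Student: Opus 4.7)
The plan is to exploit the reduction, already flagged in Section \ref{subsec:notation}, that whenever $\eta_\ast$ is a double root of $\mathcal{Q}$ the asymptotic analysis of (\ref{eq:ux}) and (\ref{eq:rho}) as $\eta\uparrow\eta_\ast$ collapses onto the giPJ--type estimates (\ref{eq:spacedouble})--(\ref{eq:uxdouble}) developed in \cite{Sarria1,Sarria2}. First I would verify the hypothesis is internally consistent: $\mathcal{D}(\overline\alpha)=0$ forces $\rho_0(\overline\alpha)=0$, so $\mathcal{Q}(\overline\alpha,t)=(\lambda m_0)^2(\eta_\ast-\eta(t))^2$ with $\eta_\ast=1/(\lambda m_0)>0$ since $\lambda<0$ and $m_0<0$; and since by assumption $\eta_\ast$ is the \emph{earliest} zero, one has $\mathcal{Q}(\alpha,t)>0$ uniformly on compact subsets of $[0,1]\setminus\{\overline\alpha\}\times[0,\eta_\ast]$.

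Next I would track the three ingredients in (\ref{eq:uxdouble}) separately. The space term $\frac{\lambda u_0'(\overline\alpha)}{1-\lambda\eta u_0'(\overline\alpha)}$ at $\alpha=\overline\alpha$ diverges exactly as $-C/(\eta_\ast-\eta)\to-\infty$. The local integral $\mathcal{\bar{P}}_0$ satisfies (\ref{eq:podouble}), and since $\lambda<0$ gives $-1/(2\lambda)>0$ the integrand stays bounded even at $\overline\alpha$, so $0<\mathcal{\bar{P}}_0(t)<+\infty$ for $0\le\eta\le\eta_\ast$; in particular $\bar{\mathcal{P}}_0^{-2\lambda}$ is bounded. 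The remaining weighted integral (\ref{eq:otherdouble}) is the delicate one and I would treat it precisely as in the proof of Theorem \ref{thm:blow1ak<0}: by Taylor expansion of $u_0'$ near $\overline\alpha$ (assuming $u_0''(\overline\alpha)=0$ and $u_0'''(\overline\alpha)\ne 0$ to obtain a clean quadratic Taylor remainder), the substitution $\sqrt{C_1/\epsilon}(\alpha-\overline\alpha)=\tan\theta$ followed by the Beta-function identity (\ref{eq:gamma}) gives a bounded contribution when $\lambda\in(-2,-1/2)$ (via the analytic-continuation/hypergeometric argument sketched in Appendix \ref{sec:furtherintegrals}) and a divergent rate $\sim(\eta_\ast-\eta)^{-1/2-1/\lambda}$ when $\lambda\in(-\infty,-2]$ (with the boundary case $\lambda=-2$ logarithmic); the case $\lambda\in[-1/2,0)$ is trivial since the integrand's exponent is non-positive.

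Combining these in (\ref{eq:uxdouble}) yields the dichotomy claimed in part (\ref{it:1ak<0special}). When $\lambda\in(-2,0)$ the integral is bounded, so the space term at $\overline\alpha$ dominates and $u_x(\gamma(\overline\alpha,t),t)\to-\infty$, while for $\alpha\ne\overline\alpha$ both the space term and all integrals stay finite, giving boundedness. When $\lambda\in(-\infty,-2]$ the space term's rate $(\eta_\ast-\eta)^{-1}$ is still strictly stronger than the integral's rate $(\eta_\ast-\eta)^{-1/2-1/\lambda}$ (note $-1/2-1/\lambda>-1$ for $\lambda<-2$), so $u_x(\gamma(\overline\alpha,t),t)\to-\infty$ is preserved; at $\alpha\ne\overline\alpha$ the divergent integral now controls the second term in (\ref{eq:uxdouble}) with opposite sign, producing $u_x(\gamma(\alpha,t),t)\to+\infty$. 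Finiteness of $t_\ast$ then follows from (\ref{eq:t*}) and the boundedness of $\bar{\mathcal{P}}_0^{-2\lambda}$ via (\ref{eq:etaivp}).

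Part (\ref{it:ak>012special}) is then immediate from (\ref{eq:rho}): at $\alpha=\overline\alpha$ the factor $\rho_0(\overline\alpha)=0$ forces $\rho(\gamma(\overline\alpha,t),t)\equiv 0$; for $\alpha\ne\overline\alpha$, $\mathcal{Q}(\alpha,t)$ is bounded away from zero on $[0,\eta_\ast]$ and $\bar{\mathcal{P}}_0$ stays bounded, so $\rho\circ\gamma$ remains finite throughout $[0,t_\ast]$. The main obstacle is purely the asymptotic analysis of the second integral in (\ref{eq:integrals}) for $\lambda\in(-2,-1/2)$, which requires the hypergeometric analytic-continuation device of Appendix \ref{sec:furtherintegrals}; everything else is bookkeeping on signs and comparison of blow-up rates already established in Theorem \ref{thm:blow1ak<0}.
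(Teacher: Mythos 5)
Your proposal is correct and follows essentially the same route as the paper: reduce the double-root case to the giPJ-type estimates (\ref{eq:spacedouble})--(\ref{eq:uxdouble}), use the rates already derived in the proof of Theorem \ref{thm:blow1ak<0} (estimate (\ref{eq:estk11}) for $\lambda\le-2$, convergence via the Appendix \ref{sec:furtherintegrals} argument for $\lambda\in(-2,-1/2)$, triviality for $\lambda\in[-1/2,0)$), and deduce boundedness of $\rho$ from $\rho_0(\overline\alpha)=0$ together with (\ref{eq:qpos}) and the boundedness of $\bar{\mathcal{P}}_0$. The paper packages the same computation as a $\lambda\mapsto\lambda/2$ correspondence with Theorem \ref{thm:ak>0lambdanegkappapos}, but the underlying estimates and case thresholds are identical to yours.
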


\begin{proof}
Recall that for $\lambda\kappa<0$, the only instance leading to finite-time blow-up involved $\mathcal{Q}$ having a double root $\eta_*$. Based on this assumption, we derived estimate (\ref{eq:estk11}) in the proof of Theorem \ref{thm:blow1ak<0}. Comparing such derivation to the one leading to estimate (\ref{eq:ak>0eq2}) in Theorem \ref{thm:ak>0lambdanegkappapos}, we note that the two correspond simply if we replace $\lambda$ in the latter by $\frac{\lambda}{2}$, you may check that both (\ref{eq:long1}) and (\ref{eq:ak>0eq1}) coincide under the suggested substitution. In this way, for instance, the regularity result for $u_x$ on the interval $-1<\lambda<0$ in Theorem \ref{thm:ak>0lambdanegkappapos}, will apply to our $u_x$ for $-1<\frac{\lambda}{2}<0$, namely, $-2<\lambda<0$, and similarly for the remaining values. In contrast, greater care is needed when studying $\rho$. Since $\eta_*=\frac{1}{\lambda m_0}$ is the earliest root of $\mathcal{Q}$ and has multiplicity two, we have, from $\mathcal{D}(\overline\alpha)=0$, that $\rho_0(\overline\alpha)=0$, whose opposite was precisely the requirement we needed on $\rho_0$ in Theorem \ref{thm:ak>0lambdanegkappapos} to obtain blow-up. Because this is not possible in the present double root case, we obtain part (\ref{it:ak>012special}) of the theorem.
\end{proof}


\begin{corollary}
\label{coro:singular2}
Consider the initial boundary value problem (\ref{eq:hs})-(\ref{eq:pbc}) for $(\lambda,\kappa)\in\mathbb{R}^+\times\mathbb{R}^+$. Suppose the initial data is such that (\ref{eq:Q}) vanishes earliest at $\eta_*=\frac{1}{\lambda M_0}$, a zero of double multiplicity, where $M_0>0$ represents the greatest value achieved by $u_0'$ in $[0,1]$ at finitely many points $\overline\alpha$. Then 

\begin{enumerate}

\item\label{it:sing21} For $(\lambda,\kappa)\in(0,1]\times\mathbb{R}^+$, $u_x$ persists globally in time. More particularly, it vanishes as $t\to+\infty$ if $(\lambda,\kappa)\in(0,1)\times\mathbb{R}^+$, but converges to a non-trivial steady-state for $(\lambda,\kappa)\in\{1\}\times\mathbb{R}^+$. In contrast, for $(\lambda,\kappa)\in(1,+\infty)\times\mathbb{R}^+$, there exists a finite $t_*>0$ such that $u_x(\gamma(\overline\alpha,t),t)\to+\infty$ as $t\uparrow t_*$, but blows up to $-\infty$ otherwise.

\item\label{it:sing22} For $(\lambda,\kappa)\in\mathbb{R}^+\times\mathbb{R}^+$, $\rho(\gamma(\overline\alpha,t),t)\equiv0$; however, if $\alpha\neq\overline\alpha$ is such that $\rho_0(\alpha)\neq0$, then $\rho\circ\gamma\to0$ as $t\to+\infty$ when $(\lambda,\kappa)\in(0,1]\times\mathbb{R}^+$, while, for $(\lambda,\kappa)\in(1,2]\times\mathbb{R}^+$ and $t_*>0$ finite as above, $\rho\circ\gamma\to0$ as $t\uparrow t_*$, whereas $\rho\circ\gamma\to C\rho_0(\alpha)$ if $(\lambda,\kappa)\in(2,+\infty)\times\mathbb{R}^+$.   

\end{enumerate}
\end{corollary}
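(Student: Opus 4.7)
The plan is to exploit the observation, already emphasized in \S\ref{subsec:notation}, that when the earliest zero of $\mathcal{Q}$ has double multiplicity the integral quantities $\mathcal{\bar P}_0$ and $\int_0^1 (\lambda u_0'-\eta C)/\mathcal{Q}^{1+\frac{1}{2\lambda}}\,d\alpha$ asymptotically reduce to the giPJ expressions (\ref{eq:podouble}) and (\ref{eq:otherdouble}). Consequently, essentially all required asymptotics are already contained in the proof of Theorem \ref{thm:blow1ak<0}; only the sign considerations and the $\kappa>0$ handling of $\rho$ are new. The first step is to record the structural consequence $\rho_0(\overline\alpha)=0$, which follows from $\mathcal{D}(\overline\alpha)=0$ and (\ref{eq:D}); this gives $C(\overline\alpha)=\lambda^2 u_0'(\overline\alpha)^2$ and, at the distinguished point, $\mathcal{Q}(\overline\alpha,t)=(1-\lambda\eta u_0'(\overline\alpha))^2$, so that the space-dependent term in (\ref{eq:ux}) at $\alpha=\overline\alpha$ simplifies to $(\eta_*-\eta)^{-1}$ up to the factor $\eta_*$.

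Next, I would invoke the estimates (\ref{eq:estk01}) and (\ref{eq:estk12}), which were derived in the proof of Theorem \ref{thm:blow1ak<0} for exactly the factorisation $\mathcal{Q}=(1-\lambda\eta u_0'(\alpha))^2$ near a nondegenerate quadratic maximum of $u_0'$, and therefore apply verbatim here (the estimates depend only on $\lambda$ and on the local shape of $u_0'$ near $\overline\alpha$, not on $\kappa$). With these in hand, substituting into (\ref{eq:ux}) for $\alpha=\overline\alpha$ gives, for $\lambda\in(0,2)$, a cancellation controlled by the ratio $C_5/C_4$ from (\ref{eq:c4c5}), yielding $u_x(\gamma(\overline\alpha,t),t)\sim (3/2-1/\lambda)\,C\,(\eta_*-\eta)^{1-\lambda}$; in particular this vanishes when $\lambda\in(0,1)$, tends to a positive constant when $\lambda=1$, and diverges to $+\infty$ when $\lambda\in(1,2)$. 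The value $\lambda=2$ is handled by the logarithmic version (\ref{eq:estk01})ii), and for $\lambda\in(2,+\infty)$ the bounded $\mathcal{\bar P}_0$ estimate (\ref{eq:estk01})iii) makes the space term dominate and give $u_x(\gamma(\overline\alpha,t),t)\sim C/(\eta_*-\eta)\to+\infty$. For $\alpha\neq\overline\alpha$ the space term stays bounded, so the integral term dominates and $u_x(\gamma(\alpha,t),t)$ has the opposite sign, vanishing for $\lambda\in(0,1)$, converging to a steady state for $\lambda=1$, and diverging to $-\infty$ for $\lambda>1$. Whether $t_*$ is finite or infinite is determined exactly as in (\ref{eq:etaivpest1})--(\ref{eq:etaivpest2}): inserting (\ref{eq:estk01})i) gives $t_*-t\sim C\int_\eta^{\eta_*}(\eta_*-\mu)^{\lambda-2}d\mu$, convergent iff $\lambda>1$, while (\ref{eq:estk01})ii)--iii) give a finite $t_*$ directly.

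For part (\ref{it:sing22}), I would read off $\rho$ from (\ref{eq:rho}). At $\alpha=\overline\alpha$ the prefactor $\rho_0(\overline\alpha)=0$ forces $\rho(\gamma(\overline\alpha,t),t)\equiv 0$; note that this is the principal qualitative difference with the single-multiplicity case of Theorem \ref{thm:singular}. For $\alpha\neq\overline\alpha$ with $\rho_0(\alpha)\neq 0$, $\mathcal{Q}(\alpha,t)$ stays strictly positive on $[0,\eta_*]$, so the asymptotics are entirely governed by $\mathcal{\bar P}_0(t)^{-2\lambda}$. Using (\ref{eq:estk01}) this gives $\rho\circ\gamma\sim C\rho_0(\alpha)(\eta_*-\eta)^{2-\lambda}$ for $\lambda\in(0,2)$, a vanishing logarithmic factor at $\lambda=2$, and a finite non-zero limit $C\rho_0(\alpha)$ for $\lambda>2$, which matches the three claimed regimes once combined with the $t_*$ dichotomy from part (\ref{it:sing21}).

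The main obstacle, such as it is, is not any single hard estimate but rather the careful bookkeeping of the two terms in the curly brackets of (\ref{eq:ux}) at $\alpha=\overline\alpha$ for $\lambda\in(0,2)$: both the space term and the global integral blow up at the same rate, and only their \emph{difference} carries the correct sign and exponent. Getting this cancellation right — which hinges on the identity $C_5/C_4=1/\lambda-1/2$ from (\ref{eq:c4c5}) — is where the proof has to be written out carefully; the rest is a direct transcription of the double-root analysis of Theorem \ref{thm:blow1ak<0} combined with the $\rho$-formula (\ref{eq:rho}).
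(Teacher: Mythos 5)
Your proposal is correct and takes essentially the same route as the paper: the paper simply restates the double-root estimates of Theorem \ref{thm:blow1ak<0} as (\ref{eq:lastest3})--(\ref{eq:lastest4}) (your (\ref{eq:estk01})--(\ref{eq:estk12}) up to renaming of constants) and substitutes them into (\ref{eq:ux}), (\ref{eq:rho}), and (\ref{eq:etaivp}), exactly as you do, with the sign of the leading coefficient at $\alpha=\overline\alpha$ governed by the Gamma-function ratio (\ref{eq:c4c5}). Your write-up is in fact more explicit than the paper's about the cancellation of the two $(\eta_*-\eta)^{-1}$ terms and about the $\rho_0(\overline\alpha)=0$ consequence, but there is no substantive difference in method.
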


\begin{proof}
Suppose $(\lambda,\kappa)\in\mathbb{R}^+\times\mathbb{R}^+$ and assume that the initial data is such that $\mathcal{Q}$ vanishes earliest at $\eta_*=\frac{1}{\lambda M_0}$, namely, $\eta_*$ is a zero of double multiplicity. Here, $M_0>0$ is the maximum of $u_0'$ in $[0,1]$, which we assume occurs at finitely many points $\overline\alpha$. Then by our usual assumption that $u_0''(\overline\alpha)=0$ and $u_0'''(\overline\alpha)\neq0$, we follow an argument analogous to the one that led to estimate (\ref{eq:estk11}) and find that, for $\eta_*-\eta>0$ small,
\begin{equation}
\label{eq:lastest3}
\mathcal{\bar{P}}_0(t)\sim
\begin{cases}
C_9(1-\lambda\eta(t)M_0)^{\frac{1}{2}-\frac{1}{\lambda}},\,\,\,\,\,\,\,\,\,\,\,\,\,&\lambda\in(0,2),
\\
-C\ln(\eta_*-\eta),\,\,\,\,\,\,\,\,\,\,\,\,\,&\lambda=2,
\\
C,\,\,\,\,\,\,\,\,\,\,\,\,&\lambda\in(2,+\infty)
\end{cases}
\end{equation}
where
$$C_9=\frac{\Gamma\left(\frac{1}{\lambda}-\frac{1}{2}\right)}{\Gamma\left(\frac{1}{\lambda}\right)}\sqrt{\frac{\pi M_0}{\left|C_1\right|}}\in\mathbb{R}^+.$$
Part (\ref{eq:lastest3})iii) is obtained via the argument outlined in Appendix \ref{sec:furtherintegrals}. Furthermore, for (\ref{eq:integrals})ii), we have that
\begin{equation}
\label{eq:lastest4}
\int_0^1{\frac{\lambda u_0'(\alpha)-\eta(t)C(\alpha)}{\mathcal{Q}(\alpha,t)^{1+\frac{1}{2\lambda}}}d\alpha}\sim C_{10}(1-\lambda\eta(t)M_0)^{-\frac{1}{2}-\frac{1}{\lambda}},\,\,\,\,\,\,\,\,\,\,\,\,\,\lambda\in(0,+\infty),
\end{equation}
where $$C_{10}=\frac{\Gamma\left(\frac{1}{2}+\frac{1}{\lambda}\right)}{\Gamma\left(1+\frac{1}{\lambda}\right)}\sqrt{\frac{\pi M_0}{\left|C_1\right|}}\in\mathbb{R}^+.$$
Using the above estimates on (\ref{eq:ux}), (\ref{eq:rho}), and (\ref{eq:etaivp}) yields our result.
\end{proof}

\end{document}